\documentclass[11pt]{amsart}
\usepackage{amscd, amssymb, mathrsfs, mathabx, 
dsfont, xcolor, 
amsmath, tikz-cd, soul}
\usepackage{kotex}
\usepackage[backref=page]{hyperref}   
\usepackage{collectbox}

\makeatletter

\makeatother

\input{xy}
\xyoption{all}

\newtheorem{Thm}{Theorem}[section]

\newtheorem{Prop}[Thm]{Proposition}
\newtheorem{Cor}[Thm]{Corollary}
\newtheorem{lemma}[Thm]{Lemma}
\theoremstyle{definition}
\newtheorem{Def}[Thm]{Definition}
\newtheorem{Def/Thm}[Thm]{Definition/Theorem}

\theoremstyle{remark}
\newtheorem{Rmk}[Thm]{Remark}
\newtheorem{example}{Example}



\numberwithin{equation}{section}
\newcommand{\ti }{\times}
\newcommand{\ot }{\otimes}


\newcommand{\Hom }{{\mathrm{Hom}}}
\newcommand{\tr }{{\mathrm{tr}}}

\newcommand{\Spec}{{\mathrm{Spec}}}

\newcommand{\cA}{{\mathcal{A}}}
\newcommand{\cO}{{\mathcal{O}}}
\newcommand{\cM}{{\mathcal{M}}}

\newcommand{\cE}{{\mathcal{E}}}
\newcommand{\cF}{{\mathcal{F}}}
\newcommand{\cH}{{\mathcal{H}}}
\newcommand{\cP}{{\mathcal{P}}}

\newcommand{\cC}{{\mathcal{C}}}


\newcommand{\HH}{{\mathbb H}}

\newcommand{\GG }{{\mathbb G}}

\newcommand{\CC }{{\mathbb C}}
\newcommand{\ZZ }{{\mathbb Z}}
\newcommand{\RR }{{\mathbb R}}


\newcommand{\kb }{{\beta}}
\newcommand{\ka }{{\alpha}}


\newcommand{\ch}{\mathrm{ch}}



\newcommand{\uC}{\underline{C}}











\newcommand{\sA}{\mathscr{A}}

\newcommand{\Ch}{\mathrm{Ch}}
\newcommand{\cB}{\mathcal{B}}


\newcommand{\End}{\mathrm{End}}

\newcommand{\Perf}{\mathrm{Perf}}
\newcommand{\Mod}{\mathrm{Mod}}
\newcommand{\Hoch}{C}
\newcommand{\oHoch}{\overline{C}}
\newcommand{\uHoch}{\underline{C}}
\newcommand{\MC}{\mathrm{MC}}

\newcommand{\Com}{\mathrm{Com}}

\newcommand{\vC}{\text{\rm \v{C}}}
\newcommand{\Om}{\Omega ^{\bullet}}

\newcommand{\Perfd}{D_{\mathrm{Dol}}}

\newcommand{\PreCom}{\Com_{cdg}}
\newcommand{\fU}{\mathfrak{U}}

\newcommand{\MFdg}{D_{dg}}
\newcommand{\TC}{\mathrm{\check{C}}}
\newcommand{\MFtc}{D_{\TC}}
\newcommand{\Th}{\TC}
\newcommand{\CP}{\TC(P)}
\newcommand{\CO}{\TC(\cO_X)}
\newcommand{\CQ}{\TC(Q)}
\newcommand{\dC}{d_{\scriptscriptstyle \text{\v{C}ech}}}

\newcommand{\Ihkr}{I_{\scriptscriptstyle \text{HKR}}}

\newcommand{\uMFtc}{\underline{D}_{\TC}}
\newcommand{\sP}{\mathscr{P}}
\newcommand{\sAtw}{\sA _{\Th, w}}

\newcommand{\Grad}{\nabla\!\!\!\!\nabla}
\newcommand{\cD}{\mathcal{D}}

\newcommand{\MFGdg}{D _{\vC}^G}
\newcommand{\frs}{\mathfrak{s}}
\newcommand{\frt}{\mathfrak{t}}
\newcommand{\tMF}{\widetilde{D _{\vC}^G}}

\newcommand{\HN}{HN}
\newcommand{\ChHN}{\Ch _{HN}}
\newcommand{\chHN}{\ch _{HN}}

\setcounter{tocdepth}{1}



\begin{document}
\title{A chain-level HKR-type map and a Chern character formula}  

\author[K. Chung]{
Kuerak Chung}
\address{Department OF Mathematics Education\\
  Korea National University of Education\\
250 Taeseongtabyeon-ro, Gangnae-myeon\\
Heungdeok-gu, Cheongju-si, Chungbuk 28173\\
 Republic of Korea}
 \email{krchung@kias.re.kr}

\author[B. Kim]{Bumsig Kim}
\address{Korea Institute for Advanced Study\\
85 Hoegiro, Dongdaemun-gu \\
Seoul 02455\\
Republic of Korea}
\email{bumsig@kias.re.kr }

\author[T. Kim]{Taejung Kim }
\address{Department OF Mathematics Education\\
  Korea National University of Education\\
250 Taeseongtabyeon-ro, Gangnae-myeon\\
Heungdeok-gu, Cheongju-si, Chungbuk 28173\\
 Republic of Korea}
\email{tjkim@kias.re.kr}

\date{\today}

\thanks{K. Chung and T. Kim are supported by NRF-2018R1D1A3B07043346. B. Kim is supported by KIAS individual grant MG016404.}

\begin{abstract}
We construct a Hochschild-Kostant-Rosenberg-type quasi-isomorphism for the negative cyclic homology of the category of 
global matrix factorizations on a smooth separated scheme of finite type over a field. 
The map is explicit enough to  yield a negative cyclic Chern character formula
for global matrix factorizations. We also extend these results to the equivariant case of a finite group.
\end{abstract}

\subjclass[2010]{Primary 14A22; Secondary 16E40, 18E30}

\keywords{Matrix factorizations, Negative cyclic homology, Hochschild homology, Categorical Chern characters}
\maketitle

\tableofcontents

\section{Introduction}

\subsection{Overview}
Fix a field $k$ of characteristic zero and let $X$ be a smooth separated scheme  of finite type over  $k$.
Let $w$ be a global function on $X$ and let $\GG$ be either the group $\ZZ$ or $\ZZ/2$.
The structure sheaf $\cO _X$ of $k$-algebras  is defined to be concentrated in degree $0$ as a $\GG$-graded sheaf. 
We require that the degree of $w$ is 2. Hence if $w$ is nonzero,
$\GG$ is $\ZZ/2$. If $w=0$, we allow either $\GG$. Assume that the critical values of $w$ are possibly  zero.

We consider a differential $\GG$-graded (in short dg) category $\MFdg (X, w)$ of 
 $\GG$-graded matrix factorizations   for $(X, w)$. By a matrix factorization $(P, \delta _P)$ for $(X, w)$ we mean  
 a locally free coherent $\cO_X$-module $P$ with a $\GG$-grading and $\delta _P$ is a curved differential of $P$.  By definition, $\delta _P$ 
 is a  degree $1$  $\cO_X$-endomorphism of $P$
 satisfying $\delta _P ^2 = w \cdot \mathrm{id}_P$.
 The Hom space from $P$ to $Q$ in $\MFdg (X, w)$  is taken to be a dg $k$-module  $\Hom _{\cO_X}^{\bullet} (I(P), I(Q))$ with 
the chosen quasi-coherent  curved injective replacements $I(P)$, $I(Q)$ of $P$, $Q$ respectively; see \cite{BFK: Kernels}. 
Note that when $\GG = \ZZ$, then $w=0$ and 
the homotopy category of $\MFdg (X, w)$ is triangulated and  equivalent to the derived category 
of bounded complexes of coherent sheaves on $X$.
When $w\ne 0$, then $\GG = \ZZ/2$ 
and the homotopy category of $\MFdg (X, w)$ is the derived category 
of matrix factorizations on $X$. If  $w\ne 0$ in every component of $X$, the homotopy category is also equivalent to the singularity category
of the hypersurface $w^{-1}(0)$; see \cite[Theorem~2]{Orlov: nonaffine} and \cite[Proposition 2.13]{LP}.

The periodic cyclic homology and the Hochschild homology 
of $\MFdg (X, w)$ play the roles of the de Rham cohomology and the Hodge cohomology 
of the ``space" $(X, w)$, respectively. 
The negative cyclic homology $\HN_*(\MFdg (X, w))$ encapsulates both homology theories, which is canonically isomorphic to
\[ \HH^{-*} (X, (\Om _{X/k} [[u]], - dw + ud )); \] 
see \cite[Theorem 1.4]{Ef}. 
Here $u$ is a formal variable with degree $2$ and by definition the exterior derivative $d$ has degree $-1$.  
Each $(P, \delta _P)$ defines  a tautological class in $\HN_0(\MFdg (X, w))$,  the 
so-called categorical Chern character $\ChHN(P)$.
Under the isomorphism, $\ChHN(P)$  can be 
written as an element $\chHN (P)$ of $\HH^{0} (X, (\Om _{X/k} [[u]],  - dw + ud))$. In this paper we find a formula for $\chHN (P)$ by Chern-Weil theory and a \v{C}ech
resolution,  which is a globalization of Brown -- Walker's formula \cite{BW}.

\subsection{A Chern character formula via Chern-Weil theory}

We fix a finite open affine covering $\fU: = \{ U_i \}_{i \in I}$ of $X$ and a total ordering of the index set $I$.
Let $(P, \delta _P)$ be a matrix factorization for $(X, w)$ as before.
Since  each homogeneous part of $P|_{U_i}$ corresponds to a 
finitely generated projective module over a noetherian ring $\Gamma (U_i, \cO_X)$, 
$P|_{U_i}$ always admits  a  connection 
\begin{equation} \label{eqn: nabla i} \nabla _{i} : P|_{U_i} \to P|_{U_i} \ot_{\cO_{U_i}} \Omega ^1_{U_i/k} . \end{equation}
It is a $k$-linear degree $-1$ map satisfying $\nabla _i (p a) = \nabla _i (p) a + (-1)^{|p|} p\ot da $ for $p \in P|_{U_i}$,  $a \in \cO _{U_i}$.
We define the associated total curvature
\begin{equation}\label{eqn: formula R}  R :=    \prod_{i\in I} ( u\nabla_i ^2   + [\nabla_i, \delta _P])  + \prod _{i < j, i, j \in I } (\nabla _i - \nabla _j ) \end{equation} 
as an element in the \v{C}ech complex $\vC (\fU, \cE nd (P) \ot _{\cO_X} \Omega _{X/k}^{\bullet} [[u]] ) $; see \eqref{eqn: total curvature} and 
Remark \ref{rmk: some exp}.
Note that the total degree of each term in the curvature $R$  is zero.

\begin{Thm}\label{thm: ch cech form} {\em (Theorem~\ref{thm: ch cech form body})}
The following Chern character formula holds:
\begin{equation}\label{eqn: ch cech form} \chHN (P) = \tr  \exp  (-R)    \end{equation}
in the ordered \v{C}ech cohomology of complexes of sheaves \[\check{H}^0 (\fU,  ( \Omega ^{\bullet}_{X/k}  [[u]]  ,  -  dw + ud )) . \]
Here the expression 
\[ \exp (-R) = \mathrm{id_P}  - R + R^2/2 - \cdots \cdots + (-1)^{\dim X} R^{\dim X}/(\dim X)! \]  uses the product formula  \eqref{eqn: ACW}  in the \v{C}ech cochain complex  and
$\tr$ denotes the supertrace \eqref{eqn: supertrace for P}.
\end{Thm}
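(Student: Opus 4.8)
The plan is to trace the categorical Chern character $\ChHN(P)$ through the chain-level Hochschild--Kostant--Rosenberg quasi-isomorphism $\Ihkr$ established earlier in the paper, using an explicit \v{C}ech model of the endomorphism dg algebra of $P$ subordinate to the covering $\fU$. First I would fix quasi-coherent curved injective replacements compatible with $\fU$ so that $\Hom_{\MFdg(X,w)}(P,P)$ becomes quasi-isomorphic to the total complex of the \v{C}ech complex $\vC(\fU,\cE nd(P))$ with the curved structure induced by $\delta_P$; this is the mechanism by which the \v{C}ech differential enters the computation. By definition $\ChHN(P)$ is the class of a negative-cyclic lift of $\mathrm{id}_P$, so the first substantive task is to produce a convenient such lift for this model. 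The inputs are the local connections $\nabla_i$ of \eqref{eqn: nabla i}: on each chart $U_i$ the pair $(\nabla_i,\delta_P)$ determines, by the affine construction of Brown--Walker \cite{BW}, a distinguished $(b+uB)$-cycle whose image under $\Ihkr$ is $\tr\exp\!\big(-(u\nabla_i^2+[\nabla_i,\delta_P])\big)$, a cocycle in $(\Om_{U_i/k}[[u]],-dw+ud)$ representing $\ChHN(P|_{U_i})$; concretely this follows from the shuffle/cyclic combinatorics of $\Ihkr$ applied to the ``connection'' lift of the identity, whereby bar terms with $p$ entries $\nabla_i$ and $q$ entries $\delta_P$ assemble, after antisymmetrizing the $\nabla_i$-inputs, into $\nabla_i^2$ and $[\nabla_i,\delta_P]$, with $u$ recording the $B$-operator's shift in $u$-degree and $1/n!$ arising from the sum over bar length. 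This produces the first product in \eqref{eqn: formula R}.

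\textbf{Gluing via \v{C}ech transgression.} Since $\nabla_i\ne\nabla_j$ on $U_i\cap U_j$, the local cycles do not patch into a global one; the correction is the Chern--Simons transgression along the affine-linear path of connections $\nabla_i+t(\nabla_j-\nabla_i)$, and in the \v{C}ech total complex this transgression is encoded precisely by the product formula \eqref{eqn: ACW}, whose effect is to trade one unit of \v{C}ech degree for one de Rham degree while introducing the new generator $\nabla_i-\nabla_j$. I would then splice the local $(b+uB)$-cycles into a single negative-cyclic cycle for the \v{C}ech model, using the fixed total ordering of $I$ to orient the simplices; carried out over all strata of $\fU$, the spliced cycle maps under $\Ihkr$ to exactly $\tr\exp(-R)$ with $R$ as in \eqref{eqn: formula R}. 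Along the way one checks directly that $\tr\exp(-R)$ is a $(-dw+ud)$-cocycle in the ordered \v{C}ech total complex --- a computation using $\delta_P^2=w\cdot\mathrm{id}_P$, the Leibniz rule for each $\nabla_i$, and the \v{C}ech coboundary --- and that its cohomology class is independent of the choices of the $\nabla_i$ and of the injective replacements, so that \eqref{eqn: ch cech form} is well posed.

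\textbf{Main obstacle.} The hard part will be the bookkeeping that reconciles three combinatorial structures at once: the bar/cyclic combinatorics (shuffles, cyclic rotations, and the sign and $u$-degree conventions distinguishing $b$ from $uB$) built into the chain-level map $\Ihkr$; the simplicial combinatorics of the \v{C}ech complex of $\fU$; and the Chern--Weil transgression interpolating the local connections. Despite the apparent proliferation of cross-terms, everything must collapse into the single closed expression $\tr\exp(-R)$ evaluated with the product rule \eqref{eqn: ACW}; I expect the delicate point to be verifying that the transgression terms generated by the \v{C}ech differential are matched precisely by the factor $\prod_{i<j}(\nabla_i-\nabla_j)$, with no surviving higher-overlap corrections. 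This is exactly where the explicit shape of $\Ihkr$ from the earlier part of the paper and a careful sign analysis become indispensable.
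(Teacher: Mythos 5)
There is a genuine gap, and it lies exactly where you flag the ``delicate point.'' The paper's proof of this theorem is not a local-to-global splicing at all: it is a one-line evaluation of the already-constructed global chain map $\tr_\nabla$ of Theorem~\ref{thm: tr nabla} on the cycle $1_P$. Since $\ChHN(P)=[1_P]$ and $1_P$ has no bar entries, formula \eqref{formula tr nabla} with $n=0$ gives $\tr_\nabla(1_P)=\sum_{j\ge 0}(-1)^j\tr(R^j/j!)=\tr\exp(-R)$, where $R=u\nabla_P^2+[\nabla_P,\delta_P+\dC]$ is the total curvature of the single \v{C}ech $0$-cochain of connections $\nabla_P=\prod_i\nabla_i$ acting on $\CP$. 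The crucial point is that the off-diagonal term $\prod_{i<j}(\nabla_i-\nabla_j)$ in \eqref{eqn: formula R} arises as the graded commutator $[\nabla_P,\dC]$ --- the \v{C}ech representative of the Atiyah class --- and not as a Chern--Simons transgression along the path $\nabla_i+t(\nabla_j-\nabla_i)$. If you literally carry out the transgression you propose, you obtain a \emph{different} (cohomologous, but not equal) \v{C}ech--de~Rham cocycle, with simplex-integral corrections on double and higher overlaps; it is not ``matched precisely'' by the single degree-$(1,1)$ factor $\prod_{i<j}(\nabla_i-\nabla_j)$ cupped via \eqref{eqn: ACW}. So the mechanism you rely on to produce the stated representative does not produce it, and the identity you defer to bookkeeping is in fact false at the cochain level.

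A second, related gap: even granting a spliced local cycle, you would still need to know that its class in $\check{H}^0(\fU,(\Omega^\bullet_{X/k}[[u]],-dw+ud))$ equals the image of $\ChHN(P)$ under the global HKR-type isomorphism. In the paper this is exactly what the compatibility diagrams \eqref{diag: tr Hoch} and \eqref{diag: derived tr Hoch} deliver: one must pass from the Hochschild complex of the first kind to the normalized complex of the second kind (where $1_P$ lives as a cycle and where the quasi-Yoneda/pseudo-equivalence invariances apply), sheafify, and compare with the Godement/derived global sections. Your proposal replaces the \v{C}ech model $\MFtc(X,w)$ by ``injective replacements compatible with $\fU$'' and never addresses this comparison, so even the well-posedness of identifying your spliced class with $\chHN(P)$ is left open. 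The efficient route is to take Theorem~\ref{thm: tr nabla} as the input (its proof is the genuinely technical content, carried out in the abstract $V$-connection framework of Appendix~\ref{sec: proof of main}) and then simply evaluate on $1_P$; no gluing argument is needed.
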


When $|I|=1$ so that $X$ is affine, 
formula~\eqref{eqn: ch cech form} is the main result of  Brown and Walker \cite{BW}.
When $u=0$, 
$\chHN(P)$ becomes the image $\ch _{HH} (P)$ of  the Hochschild homology valued Chern character $\Ch _{HH}(P)$. 
Formula~\eqref{eqn: ch cech form} specialized to $u=0$ 
matches with those of C\u{a}ld\u{a}raru \cite{Cal} for $\GG = \ZZ$; and Platt  \cite{Platt} 
and Kim and Polishchuk \cite{KP} for $\GG = \ZZ/2$, respectively (up to a sign convention which occurs in the choices of
isomorphisms). We will also get a localized Chern character formula; see \S~\ref{sec: local}

\subsection{A chain-level HKR-type map}

We will prove Theorem~\ref{thm: ch cech form} by an explicit quasi-isomorphism, which  is a globalization of the  generalized Hochschild-Kostant-Rosenberg (in short HKR) map in the 
affine case  \cite[Theorem 5.19]{BW}.
The ordinary HKR map \cite[\S~1.3.15]{Loday} uses the canonical de Rham differential $d$ of $\cO _X$. 
While $P$ does not admit a global connection in general, there is a connection  on $P|_{U_i}$. 
This leads us to consider the \v{C}ech model $\MFtc (X, w)$ of $\MFdg (X, w)$; see \S \ref{def: MFtc}.
Let $\CP$ denote the sheafified version of the ordered \v{C}ech $k$-complex  $\vC (\fU, P)$ for $P$; see \eqref{def: sheaf cech}. 
 The objects of $\MFtc (X, w)$ are those of $\MFdg (X, w)$. The Hom space $\Hom (P , Q)$  is
defined to be the dg $k$-space
\begin{equation}  \label{eqn: Hom} \begin{aligned} \Hom _{\CO}^{\bullet} (\CP, \CQ) & \cong \Gamma (X,   \cH om _{\cO _X}^{\bullet} (P,  Q)  \ot _{\cO_X} \CO  ) \\
                      & = \bigoplus _{p} \vC ^ p (\fU, \cH om _{\cO _X}^{\bullet} (P,  Q) )   . \end{aligned} \end{equation}
Its underlying graded $k$-space is  the linear space of graded right $\CO $-linear sheaf homomorphisms from $\CP$ to $\CQ$.
Its differential is given as follows. 
Let $\delta _{\CP} = \delta _P + \dC$, the sum of the differential $\delta _P$ of $P$ and  the \v{C}ech differential $\dC$. 
Then the differential of $\Hom (P, Q)$ is defined by sending 
$ f \in   \Hom (P, Q)  $ to \[ \delta _{\CQ} \circ f - (-1)^{|f|} f \circ \delta _{\CP}  \in  \Hom (P, Q)  , \]
 where $|f|$ denotes the degree of $f$.

For each matrix factorization $P$ for $(X, w)$ and $i\in I$, 
choose, once and for all,  a connection $\nabla _{P, i} $ on $P|_{U_i}$. Hence we have a zero-th global \vC ech element 
\[ \nabla _{P} := \prod _{i \in I} \nabla _{P|_{U_i}}  \]
of the $k$-sheaf $\cH om _{k} (\vC (P), \vC (P) \ot \Omega ^1_X )$.

When $P=\cO_X$, we choose  the de Rham differential as a connection on $\cO _{U_i}$. 
Let $\Hoch (\MFtc (X, w) )$ denote the Hochschild complex
of  the dg category $\MFtc (X, w)$. Using the supertrace $\tr$ \eqref{eqn: supertrace for P} and the product \eqref{eqn: ACW}
we  define a $k[[u]]$-linear map 
\begin{equation}\label{int: def tr nabla}   \tr _{\nabla } : \Hoch (\MFtc (X, w) )[[u]] \to \vC (\fU, \Om _{X/k} )  [[u]]   \end{equation}
by sending
\begin{equation} \label{eqn: tr formula}
 \ka _0 [ \ka _1 | ... | \ka _n]    \mapsto
\sum_{(j_0, ..., j_n)} (-1)^{J}   \tr\big( \frac{ \ka _0 R_1^ { j_0 } [\nabla , \ka _1] R_2^{j_{1}} [\nabla, \ka _2]  ...  R_n^{j_{n-1}}  [\nabla  , \ka _n] R_0^{j_n}  }{(n+J )!} \big)
 \end{equation} for  $ \ka _j \in \vC  (\fU, \cH om_{\cO _X} (P_{j+1} ,  P_{j} ) ) $ with $P_{n+1} =P_0$,
 where $j_i \in \ZZ_{\ge 0}$ and $J= \sum _{i=0}^n j_i$; and
\begin{align*} [\nabla , \ka _j] & :=     \nabla _{P_j } \cdot   \ka_j -   (-1)^{|\ka _j|  } \ka_j \cdot  \nabla _{P_{j+1} }  ;   \\ 
R_j & :=  u \nabla _{P_j} ^2  +  [ \nabla _{P _j } ,  \delta _{P _j} ]  + \prod _{i_0 < i_1} ( \nabla _{P _j , i_0} -   \nabla _{P _j , i_1}) ;
\end{align*} see \eqref{eqn: concrete form} for unwinded expressions.
Note that when $|I|=1$ and $P_i$ is $\cO _X$ for $i=0, ..., n$,  formula \eqref{eqn: tr formula} is  the usual HKR map.

\begin{Thm} \label{thm: tr nabla} {\em (Theorem~\ref{thm: tr nabla body})}
Let $b$ denote the sum of Hochschild differentials from compositions and differentials; and let $B$ be Connes' boundary map.
Then  the $k[[u]]$-linear map
\begin{multline}\label{eqn: thm tr nabla}
  \tr _{\nabla} :  \ \   (\Hoch (\MFtc (X, w) )[[u]], b + uB  )   \\ 
\to   (\vC (\fU, \Omega ^{\bullet}_{X/k}  )  [[u]] ,   \dC - dw + ud )
 \end{multline}
is a quasi-isomorphism  compatible with the HKR-type isomorphism. 
\end{Thm}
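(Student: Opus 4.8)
The plan is to reduce the global statement to the already-established affine (\v{C}ech-degree-zero) version of Brown--Walker \cite[Theorem 5.19]{BW} together with a descent argument over the \v{C}ech direction. First I would set up the relevant filtrations: on the source, the $u$-adic filtration together with the \v{C}ech-degree filtration on $\Hoch(\MFtc(X,w))$ induced by $\CP = \vC(\fU,P)$; on the target, the $u$-adic filtration together with the \v{C}ech-degree filtration on $\vC(\fU,\Omega^\bullet_{X/k})$. Since $\fU$ is a \emph{finite} cover and $\dim X < \infty$, both filtrations are exhaustive, bounded, and complete, so by the Eilenberg--Moore comparison theorem it suffices to prove that $\tr_\nabla$ is a chain map and that the induced map on the $E_1$-pages (equivalently, the associated graded complexes) is a quasi-isomorphism. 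The compatibility with the HKR isomorphism of \cite[Theorem 1.4]{Ef} (cited here) is then automatic once both are identified on associated graded pieces, because the HKR isomorphism itself is constructed along the same \v{C}ech model.

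The first genuine step is to verify that \eqref{eqn: thm tr nabla} is a chain map, i.e.\ that $\tr_\nabla \circ (b+uB) = (\dC - dw + ud)\circ \tr_\nabla$. Here I would argue that the interaction with each of the three pieces of $\delta_{\CP} = \delta_P + \dC$ and with Connes' $B$ can be checked term-by-term using the graded Leibniz/cyclicity properties of the supertrace $\tr$ \eqref{eqn: supertrace for P} and of the product \eqref{eqn: ACW}. Concretely: (i) the $\dC$-part of $b$ matches the $\dC$ on the target because $\nabla_P$, by construction $\nabla_P = \prod_i \nabla_{P|_{U_i}}$, and the \v{C}ech-difference term $\prod_{i_0<i_1}(\nabla_{P_j,i_0}-\nabla_{P_j,i_1})$ appearing in $R_j$ are exactly designed so that $[\dC,\nabla_P]$ equals that difference term; (ii) the $\delta_P$-part of $b$ together with the commutator terms $[\nabla_{P_j},\delta_{P_j}]$ in $R_j$ reproduce the $-dw$ twist, which is precisely the content of the affine Brown--Walker computation applied stalk-wise; (iii) the $uB$-term produces the $ud$ via the usual HKR mechanism, again reducing to the affine case. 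Each of these is a (long but) formal manipulation; the key algebraic inputs — graded cyclic invariance of $\tr$, the curved-Leibniz identity $\delta_P^2 = w\cdot\mathrm{id}$, and the connection identity $[\nabla_i,-]$ being a derivation — are all available.

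The second step is the $E_1$-page quasi-isomorphism. On the associated graded for the \v{C}ech filtration, the differential $\dC$ is killed, and what remains on the $p$-th graded piece is, for the source, $\bigoplus_{i_0<\dots<i_p}\Hoch\big(\MFdg(U_{i_0\cdots i_p}, w|_{U_{i_0\cdots i_p}})\big)[[u]]$ with differential $b+uB$ (no \v{C}ech-difference terms survive, since those raise \v{C}ech degree), and similarly for the target $\bigoplus_{i_0<\dots<i_p}\vC^0$-piece is $\Omega^\bullet_{U_{i_0\cdots i_p}/k}[[u]]$ with differential $-dw+ud$. On each such affine open intersection $U_{i_0\cdots i_p}$ (which is affine since $X$ is separated), the induced map is exactly the affine generalized HKR map of \cite[Theorem 5.19]{BW} for that affine piece with the chosen connections $\nabla_{P,i}$ restricted there — one has to check that the surviving terms of \eqref{eqn: tr formula} on the graded piece are literally the Brown--Walker formula, which follows because on a single affine chart all the $\nabla_{P,i}$ can be compared and the $\prod_{i<j}(\nabla_i-\nabla_j)$ factor drops out in \v{C}ech degree zero. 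Hence $E_1$ of $\tr_\nabla$ is a direct sum of affine HKR quasi-isomorphisms, so it is a quasi-isomorphism, and by the comparison theorem $\tr_\nabla$ itself is one.

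The main obstacle I anticipate is the chain-map verification in the \v{C}ech direction, specifically reconciling the ordered-\v{C}ech sign conventions and the supertrace signs in \eqref{eqn: tr formula} with the Koszul signs coming from $b$ and $B$: the formula \eqref{eqn: tr formula} has a subtle $(-1)^J$ sign and a symmetrized denominator $(n+J)!$ whose combinatorics must match, after applying $\dC$, the reindexing of $j$-exponents forced by the \v{C}ech-difference term in $R_j$. I would handle this by first treating the purely \v{C}ech-linear structure — showing $\tr_\nabla$ is $\vC(\fU,-)$-compatible in a precise sense — so that the sign bookkeeping is isolated from the (already-known) affine computation; this is where Remark~\ref{rmk: some exp} and the unwound expressions \eqref{eqn: concrete form} will do most of the work. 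The remaining identities ($\delta_P$- and $B$-compatibility) are then genuinely local and quote Brown--Walker verbatim.
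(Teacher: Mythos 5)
Your step (1) — the chain-map identity — is essentially the paper's Theorem~\ref{thm: tr nabla general}, proved in Appendix~\ref{sec: proof of main} by the same Brown--Walker-style computation you describe, so that part is fine. The gap is in step (2). You assert that the associated graded of $\Hoch(\MFtc(X,w))$ for the \v{C}ech-degree filtration is $\bigoplus_{i_0<\dots<i_p}\Hoch\bigl(\MFdg(U_{i_0\cdots i_p},w)\bigr)$, but this is not what the associated graded is. A \v{C}ech-degree-$p$ chain $\ka_0[\ka_1|\cdots|\ka_n]$ is a sum of terms with $\ka_j\in\vC^{p_j}$, $\sum p_j=p$, where the multi-indices of the various $\ka_j$ are unrelated; the complex you get is the Hochschild complex of the whole Alexander--\v{C}ech--Whitney algebra in its \v{C}ech grading, not a direct sum of Hochschild complexes of the intersections. (Already in \v{C}ech degree $1$ you get mixed terms such as $\ka_0[\ka_1]$ with $\ka_0\in\vC^1,\ \ka_1\in\vC^0$ together with $\ka_0\in\vC^0,\ \ka_1\in\vC^1$, coupled by $b_2$.) That the $E_1$-page nevertheless computes $\bigoplus_{i_0<\dots<i_p}HH_*(U_{i_0\cdots i_p},w)$ is precisely Zariski descent for Hochschild homology of matrix factorization categories — a theorem (Keller, Efimov), not a formal consequence of the filtration being finite. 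So as written your $E_1$ identification begs the question.

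The paper closes exactly this gap by a different mechanism: it sheafifies the (normalized, second-kind) Hochschild complex and invokes Efimov's comparison (Proposition~\ref{prop: Efimov results}\eqref{item: Efimov results 1}) identifying the global complex with $\RR\Gamma$ of the sheafified one; the quasi-isomorphism is then checked locally on each affine $U$ via the chain of pseudo-equivalences $\MFtc(U,w|_U)\to q\MFtc(U,w|_U)\leftarrow \sAtw(U)\leftarrow\sA_w(U)$ and the HKR map for the curved algebra $(\cO(U),0,-w)$ (diagram \eqref{diag: tr Hoch}). Note that this local reduction forces a detour through Hochschild complexes of the \emph{second kind} (pseudo-equivalence invariance \eqref{eqn: inv pseudo} only holds there, and the local model $\sA_w(U)$ is genuinely curved), followed by the Polishchuk--Positselski comparison \eqref{eqn: I to II} to return to the first kind. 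Your proposal is silent on this first-kind/second-kind issue as well. To repair your argument you would either have to import the descent theorem explicitly as an input (at which point you are close to reproducing the paper's route), or prove directly that the \v{C}ech-filtration spectral sequence of the global Hochschild complex degenerates to the local Hochschild homologies — which is the hard content, not a bookkeeping step.
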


Again when $|I|=1$ (i.e., the affine case), this is a theorem of Brown and Walker \cite{BW}, which generalizes a work of
Segal \cite{Se}.

\subsection{The equivariant case by a finite group}
As an application of Theorem~\ref{thm: tr nabla} we can get an equivariant Chern character formula:
Let $X$ come with a finite group $G$ action and assume that $w$ is $G$-fixed. Furthermore, assume that $X$ is quasi-projective.
We consider the dg category $\MFGdg (X, w)$ of $G$-equivariant matrix factorizations for $(X, w)$. 
We choose a finite open affine covering $\fU = \{ U_i\}_{i\in I}$ such that each $U_i$ is $G$-invariant. 
For a finite open affine covering of the $g$-fixed locus $X^g$, let us use $\{ X^g \cap U_i \} _{i \in I}$. 
Let $\nabla _{P|_{U_i}} $ be a $G$-equivariant connection  on  $P|_{U_i}$. Define a connection $\nabla _{P|_{X^g}, i} $ on $P|_{X^g \cap U_i}$ 
to be the restriction of $\nabla _{P|_{U_i}} $ to $X^g \cap U_i$ 
followed by the natural map \[P|_{X^g \cap U_i} \ot _{\cO _{X^g\cap U_i}}  \Omega ^1_{U_i/k}|_{X^g \cap U_i} \to P|_{X^g \cap U_i}\ot_{\cO _{X^g\cap U_i}} \Omega ^1_{X^g \cap U_i/k} \ . \]

\begin{Thm}\label{thm: ch orb}  Let $w_g := w|_{X^g}$.
\begin{enumerate}
\item \label{item one} {\em (Theorem~\ref{thm: ch orb body})}
 The mixed Hochschild  complex $(\Hoch (\MFGdg (X, w)), b, B)$ is naturally quasi-isomorphic to the coinvariant mixed complex
\[   \left(\oplus _{g \in G}  (\Gamma ( X^g,  \vC(\Omega ^{\bullet}_{X^g /k })), \dC - dw_g , d ) \right)_G . \]

\item \label{item two} {\em (Formula \eqref{eqn: equiv HH ch form})} For $P\in \MFGdg (X, w)$, 
 its $G$-equivariant Chern character $\ch ^G_{HH} (P)$  becomes
\begin{equation*}
\ch ^G_{HH} (P) = \frac{1}{|G|} \bigoplus _{g \in G}  \tr  \left( \varphi _{g|_{X^g}} \exp (  - [ \prod _{i} \nabla _{P|_{X^g}, i }   , \delta _{P|_{X^g}}  + \dC ] ) \right) , \end{equation*}
\end{enumerate}
where $\varphi_g  \in \End_{\cO _{X^g}} (P|_{X^g} ) $ is  defined by the multiplication by $g$.
\end{Thm}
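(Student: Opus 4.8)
The plan is to deduce Theorem~\ref{thm: ch orb} from the non-equivariant Theorem~\ref{thm: tr nabla} by a standard descent along the semidirect-product structure of $\MFGdg(X,w)$, together with the ``orbifold'' decomposition of Hochschild homology indexed by conjugacy classes. First I would recall that, since $G$ is finite and we work over a field of characteristic zero, the $G$-equivariant category $\MFGdg(X,w)$ is Morita equivalent (as a dg category, up to taking the idempotent completion) to the smash product $\MFtc(X,w)\rtimes G$ of the $G$-invariant \v Cech model with the group algebra $k[G]$; the hypothesis that the $U_i$ are $G$-invariant and that we chose $G$-equivariant connections $\nabla_{P|_{U_i}}$ makes $\MFtc(X,w)$ a genuinely $G$-equivariant object, so this is available. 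Then I would invoke the well-known computation of the Hochschild complex of a smash product by a finite group: there is a natural quasi-isomorphism
\begin{equation*}
\Hoch(\cC \rtimes G)\ \simeq\ \Bigl(\bigoplus_{g\in G}\Hoch(\cC;{}_g\cC)\Bigr)_G,
\end{equation*}
where $\Hoch(\cC;{}_g\cC)$ is the Hochschild complex with coefficients in the $\cC$-bimodule twisted by the automorphism $g$, and $G$ acts by simultaneously conjugating $g$ and acting on the complex. This identification is compatible with the mixed structure $(b,B)$, the $B$-map on the twisted summands being Connes' operator for the twisted cyclic object. So the essential content of part~\eqref{item one} reduces to identifying $\Hoch(\MFtc(X,w);{}_g\MFtc(X,w))$ with the $g$-twisted \v Cech-de Rham complex $(\Gamma(X^g,\vC(\Omega^\bullet_{X^g/k})),\dC-dw_g,d)$.

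For that twisted identification the key step is a \emph{$g$-twisted} version of Theorem~\ref{thm: tr nabla}. I would run the same argument that proves \eqref{eqn: thm tr nabla}, but inserting the operator $\varphi_g$ at the basepoint of each Hochschild chain; the crucial geometric input is that, Zariski-locally, the fixed-point formalism localizes everything to $X^g$: a $g$-twisted Hochschild chain on $\cO_X$ (or on the endomorphism sheaves of matrix factorizations) is computed by the $g$-trace, which by the projection/localization property of the trace only sees the restriction to the fixed locus $X^g$, where $w$ restricts to $w_g$ and $\Omega^\bullet_X$ restricts (after the quotient map displayed in the statement) to $\Omega^\bullet_{X^g}$. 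Concretely I would define a twisted trace map $\tr_{\nabla,g}$ by the same formula \eqref{eqn: tr formula} with $\ka_0$ replaced by $\varphi_g\ka_0$ and with the curvature terms $R_j$ restricted to $X^g$, check that it is a chain map into $(\vC(\fU\cap X^g,\Omega^\bullet_{X^g/k})[[u]],\dC-dw_g+ud)$ exactly as in the untwisted case, and prove it is a quasi-isomorphism by reducing to the affine pieces $U_i$, where it is the twisted HKR quasi-isomorphism of Brown--Walker \cite{BW} combined with the elementary fact that $\Hoch$ of a $g$-twisted finitely generated projective module over $\Gamma(U_i,\cO_X)$ localizes to $\Gamma(U_i\cap X^g,\cO_X)$. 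Taking $G$-coinvariants of the direct sum over $g$ and using that $|G|$ is invertible (so coinvariants $=$ invariants) then yields part~\eqref{item one}.

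Part~\eqref{item two} is then essentially formal. The $G$-equivariant Hochschild Chern character $\Ch^G_{HH}(P)$ is by definition the image of $\mathrm{id}_P$ under the composite of the Chern character of $\cC\rtimes G$ with the smash-product decomposition, which on the summand indexed by $g$ is represented by the class of the twisted chain $\varphi_g[\ ]$ at the object $P$; the normalizing factor $\tfrac1{|G|}$ is exactly the averaging that implements coinvariants $=$ invariants. Feeding this class through the twisted quasi-isomorphism $\tr_{\nabla,g}$ (at $u=0$, since $\ch^G_{HH}$ is the Hochschild-valued, not negative-cyclic-valued, character) and using the explicit exponential form of $\tr_{\nabla}(\mathrm{id}_P)$ that underlies Theorem~\ref{thm: ch cech form}—namely $\tr\exp(-[\nabla_P,\delta_P+\dC])$—with $\nabla_P,\delta_P$ replaced by their restrictions $\nabla_{P|_{X^g},i},\delta_{P|_{X^g}}$ and the operator $\varphi_{g|_{X^g}}$ inserted, gives precisely formula~\eqref{eqn: equiv HH ch form}.

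The main obstacle I anticipate is not the conceptual skeleton but the bookkeeping in the twisted trace computation: one must verify that inserting $\varphi_g$ is compatible with the \v Cech differential $\dC$ and with the product \eqref{eqn: ACW} used in the exponential—i.e. that $\varphi_g$ genuinely commutes past the curvature terms up to the controlled corrections that survive the supertrace—and that the restriction-to-$X^g$ maps on differential forms are compatible with the \v Cech resolution on both sides (this uses that the $U_i$, hence the $U_i\cap X^g$, are affine and that $X$ is quasi-projective so that $G$-invariant affine covers and $G$-equivariant connections exist). Establishing that the twisted $\tr_{\nabla,g}$ is a chain map and a quasi-isomorphism, with the correct signs, is where the real work lies; once that is in place the assembly into Theorem~\ref{thm: ch orb} is routine descent.
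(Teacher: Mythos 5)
Your skeleton is sound and lands in the same place, but it is organized differently from the paper, and the difference matters for where the work sits. The paper does not pass through the smash-product decomposition into $g$-twisted Hochschild complexes $\Hoch(\cC;{}_g\cC)$ followed by a twisted HKR. Instead it introduces the full subcategory $\tMF(X,w)$ of induced objects $\widetilde{E}=E\otimes\bigoplus_g g\cO_X$ (this is your $\MFtc(X,w)\rtimes G$, and the Morita step is the same), and then uses Baranovsky's map $\Psi$, which sends $a_0\otimes g_0[a_1\otimes g_1|\cdots|a_n\otimes g_n]$ to the \emph{untwisted} chain $a_0|_{X^g}[g_0(a_1)|_{X^g}|\cdots]$ in $\Hoch(\MFtc(X^g,w_g))$ with $g=g_0\cdots g_n$. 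In other words, $\Psi$ performs your steps (i) conjugacy decomposition and (ii) localization to $X^g$ in a single stroke, and its target is the ordinary Hochschild complex of the category of matrix factorizations on $X^g$. That $\Psi$ is a quasi-isomorphism is imported from Baranovsky for $w=0$ and extended by Mayer--Vietoris plus a spectral-sequence argument for general $w$. The payoff is that no twisted trace map is ever needed: the untwisted $\tr_\nabla$ of Theorem~\ref{thm: tr nabla} is applied verbatim on each $X^g$, equivariantly for the conjugation action, and one passes to coinvariants. Your factorization is more transparent conceptually, but it shifts all the analytic content into the construction and verification of $\tr_{\nabla,g}$, which you explicitly defer; that is precisely the step the paper's organization is designed to avoid, and as written your proposal does not supply it (and the attribution of a twisted HKR to Brown--Walker is off --- the affine twisted statement is Polishchuk--Vaintrob's, not theirs).

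There is also a gap in your treatment of part~\eqref{item two}. The object $P\in\MFGdg(X,w)$ is not itself an object of the smash-product category, so $1_P$ is not directly a chain there; one must first replace $1_P$ by the idempotent $\pi=\iota\circ\varpi$ on $\widetilde{P^\sharp}$, using that $P$ is a retract of $\widetilde{P^\sharp}=\frs_*\frt^*P$. The factor $\tfrac{1}{|G|}\varphi_g$ is not ``the averaging that implements coinvariants $=$ invariants'': it is the literal $g$-component of $\pi$, coming from the splitting $\varpi=\tfrac{1}{|G|}\,\mathrm{sum}\circ\frs_*\varphi$ (equation~\eqref{eqn: g} in the paper). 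For the Hochschild-valued statement this is the elementary homology $1_P\sim\pi$; for the negative cyclic refinement \eqref{eqn: equiv ch form} the paper needs the nontrivial cycle $\eta_\pi$ of Proposition~\ref{prop: eta}, proved in the appendix. Your ``essentially formal'' dismissal hides exactly this step, which is the only place in part~\eqref{item two} where something has to be computed.
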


Specializing to the case where X is affine space, this recovers the results
\cite[Theorem~2.5.4 \& Theorem~3.3.3]{PV: HRR} of
Polishchuk and Vaintrob. When $w=0$ and $\GG = \ZZ$, Theorem~\ref{thm: ch orb}~\eqref{item one}  is the main theorem of Baranovsky \cite{Bar}. 
In fact we will establish Theorem~\ref{thm: ch orb} combining Theorem~\ref{thm: tr nabla} with his works in \cite{Bar}.
Somewhere else we will treat the case of smooth separated DM stacks of finite type over $k$.

 \subsection{Outline of the paper}  In \S~\ref{sec: hth} 
 we collect definitions and fundamental facts of curved dg $k$-categories; and their modules, mixed Hochschild  complexes, and negative cyclic complexes.
In \S~\ref{sec: ccc}  we recall the definitions of categorical Chern characters and find their alternative expressions; see Proposition \ref{prop: eta}.
In \S~\ref{sec: gmf} we discuss the \v{C}ech model of the dg category of global matrix factorizations and introduce the sheafifications of mixed Hochschild  complexes.
We also collect various invariance results which we will use later.
In \S~\ref{sec: c ecm} we construct connections on a \v{C}ech resolution of global matrix factorizations. Using those connections we define a cochain map $\tr _{\nabla}$ 
and prove Theorem~\ref{thm:  tr nabla}.
In \S~\ref{sec: app} we prove Theorems~\ref{thm: ch cech form} and~\ref{thm: ch orb}.

 \subsection{Acknowledgements} 
 K.C. and T.K. thank Chanyoung Sung and Hoil Kim for encouragement and support.
The  authors thank Michael Brown,  Jaeyoo Choy,  David Favero, and Ed Segal for careful reading and suggestions.

 \subsection{Conventions}
 Unless otherwise stated, ``graded" means $\GG$-graded.
 For a (graded) sheaf $\cF$ on a topological space and an open covering $\fU = \{ U_i \}_{i \in I}$,
 let $U_{i_0 ... i_p} = U_{i_0} \cap ... \cap U_{i_p}$ and let 
 $\vC ^p (\fU , \cF) = \prod _{i_0 < ... < i_p } \cF (U_{i_0 ... i_p})$. 
 For a $k$-category $\cA$, by $x \in \cA$ we mean an object of $\cA$. For $x, y \in \cA$, by $\cA (x, y)$,  $\Hom (x, y)$ or $\Hom _{\cA} (x, y)$  we mean 
 the Hom space from $x$ to $y$  in $\cA$. 
For an element $a$ of a (bi-)graded $k$-space, the (total) degree of $a$ is denoted by $|a|$. 
The commutator $[\ ,\ ]$ is the graded commutator. Often we write $1_P$ for $\mathrm{id}_P$.
The symbol $\simeq$ indicates a quasi-isomorphism.

\section{Hochschild-type invariants} \label{sec: hth}

In this section we collect definitions of curved dg $k$-categories, their mixed Hochschild  complexes, and their negative cyclic complexes.
The main references of this section are \cite{BW, PP}.

\subsection{Curved dg $k$-categories}\label{sub: dg k cat}
\subsubsection{Curved dg algebras}
Let $k$ be a field of characteristic zero.
A $k$-category is   a $k$-algebra with several objects. 
A $k$-algebra is a $k$-module with a unital associative $k$-linear multiplication.
Let $A$ be a graded algebra over $k$ and let $h$ be a degree $2$ element of  $A$. Let $d_A$ be a degree $1$, $k$-linear map such that 
(1) the Leibniz rule holds (i.e., $d _A (a_1a_2) = d_A (a_1) a_2 + (-1)^{|a_1|} a_1 d_A (a_2)$ for $a_1, a_2 \in A$), 
(2) $d_A ^2 = [h , ?]$ (i.e., $d_A ^2 (a) = h a - a h$ for $a \in A$), and (3) $d_A (h) = 0$. Then we call
 $(A, d_A, h)$ a  {\em curved dg {\rm (\rm in short cdg)} $k$-algebra}. 
 Furthermore, if $h=0$, then we call  $(A, d_A)$ a  {\em dg $k$-algebra}.

\subsubsection{Curved dg categories}
There is the notion of a {\em cdg $k$-category} $\cA$ as a cdg $k$-algebra with several objects.
It is a graded $k$-category with a differential $d_{x, y}$ of $\cA (x, y)$ and a degree $2$ element $h_x \in \cA (x, x)$ for every $x, y \in \cA$ 
satisfying the Leibniz rule, $d_{x, y} ^2 (f) = h_y \circ f - f \circ h_x$ for $f\in \cA (x, y)$, and $d_{x, x} ( h_x ) = 0$. The element $h_x$ is called
the {\em curvature} of $x$. If $h_x=0$ for all $x\in \cA$, we call $\cA$ a {\em dg $k$-category}.

\subsubsection{Precomplexes}
There is a cdg category $\PreCom (k)$,  the category of {\em precomplexes} of $k$-modules:
an object is a  graded $k$-module $C$ with a $k$-linear degree $1$ map $\delta _C$
and the Hom space from $C$ to $D$ has a differential $[\delta _{?}, ]$.
The curvature of $C$ is by definition $\delta _C^2$. The full subcategory $\Com _{dg} (k)$ of all objects with vanishing 
curvatures is called the {\em dg category of $k$-complexes}.

\subsubsection{Cdg functors}\label{sub: cdg fun}
A pair $(F, \ka )$  is called  a {\em quasi-cdg functor} 
from a cdg category $\cA$ to another cdg category $\cB$ if 
$F$ is a $k$-linear degree $0$ homogeneous functor 
and $\ka$ is an assignment of a degree $1$ element $\ka_x \in \cB (Fx, Fx)$ for each $x \in \cA$ such that
 \[F (d_{x, y} (f) ) = d _{ Fx, Fy} (F(f) ) + \ka_y \circ F(f) - (-1)^{|f| } F(f) \circ \ka_x \] for all $x, y \in \cA$ and $f \in \cA  (x, y)$.
Furthermore if   $F (h_x) =  h _{Fx} + d_{Fx,Fx} (\ka_x) + \ka_x ^2 $,  $(F, \ka)$ is called a {\em  cdg functor}. 
We often say simply $F$ is a (quasi-)cdg functor for a (quasi-)cdg functor $(F, \ka)$.
If $\ka_x =0$ for every $x$, we call $F$ {\em strict}.
In particular if $\cA$, $\cB$ are dg categories and $F$ is a strict cdg functor, $F$ is just called a {\em dg functor}.
Informally a dg functor is a set of several \lq\lq cochain" maps.  
If $(G, \beta ) : \cB \to \cC$ is a quasi-cdg functor, then the composition of $(G, \kb)$ and $(F, \ka)$ is defined as $(G\circ F, G \circ \ka + \kb \circ F)$.
Note that the composition is  a quasi-cdg functor and the composition of cdg functors  becomes a cdg functor.

\subsubsection{The opposite category}
The opposite category $\cA^{op}$  of $\cA$ is a cdg category whose objects are 
the objects of $\cA$ and whose morphisms are $\cA ^{op} (x, y) := \cA (y, x)$ with differential $d_{y, x}$.
The composition $g \circ f $ in $\cA ^{op}$  is $(-1)^{|f||g|} f \circ g $ in $\cA$ and
the curvature of $x$ in $\cA ^{op}$ is $-h_x$.

\subsubsection{Left and right (quasi-)cdg modules} 
A left (resp., right) {\em quasi-cdg module} over a cdg category $\cA$ is a strict quasi-cdg functor $F: \cA \ (\text{resp., } \cA ^{op} ) \to \PreCom (k) $.
A left (resp., right) {\em cdg module} over $\cA$ is a strict cdg functor $F : \cA \ (\text{resp., } \cA ^{op} )  \to \PreCom (k)$. 
If $\cA$ is a dg category and $F : \cA  \ (\text{resp., } \cA ^{op} )  \to Com_{dg} (k) $ is a dg functor,  we call $F$ is a left (resp., right) {\em dg module} over $\cA$.

The category  $q\Mod_{cdg} (\cA)$ of right quasi-cdg modules over $\cA$ has a natural cdg structure:
The Hom space $\Hom (F, G)$  of two quasi-cdg modules $F, G : \cA ^{op} \to \PreCom (k)$
 is the $\GG$-graded $k$-space of $k$-linear homogenous natural transformations $\mathfrak{n} : F \Rightarrow G$.
The differential $\delta$ of 
$\Hom (F, G)$ is given by $(\delta \mathfrak{n} ) (x) := [ d_{F(x), G(x)}, \mathfrak{n} (x)]$ and the curvature $h_F \in \Hom (F, F)$ of $F$ is given by $h_{F} (x) = h_{F(x)} - F(h_x)$.
The full subcategory $\Mod_{cdg} (\cA)$ of $q\Mod_{cdg} (\cA)$ consisting of right cdg modules over $\cA$ has a natural dg structure.
If $\cA$ is a dg category, we write $\Mod_{dg}(\cA)$ for $\Mod_{cdg} (\cA)$.

\subsubsection{Matrix factorizations}\label{subsub matrix affine}
 By unwinding the definition, a right quasi-cdg module $M$ over $(A, d_A, -h)$ is a right graded $A$-module $M$
with a degree $1$, $k$-linear map $\delta : M \to M$ such that $\delta (m a) = \delta  (m) a + (-1)^{|m|} m d_A (a)$ for every $m\in M$, $a \in A$.
Its curvature is defined to be $\delta ^2 + \rho _{-h} \in \End_A (M)$  where $\rho _{-h}$ is the right multiplication by $-h$.
(1) For example, $(A, d_A)$ can be regarded as a right quasi-cdg module over $(A, d_A, -h)$ with curvature $\lambda _{-h}$.
(2) Exactly when the curvature $\delta ^2 + \rho _{-h}$ of $M$ is zero, $M$ is called a right cdg-module over $(A, d_A, -h)$. Furthermore
we call the right cdg module $(M, \delta)$  over $(A, 0, -h)$ a {\em matrix factorization  for $(A, h)$ or for $(\Spec A, h)$}
when $A$ is a commutative $k$-algebra concentrated in degree $0$,   
and $M$ is  a  finitely generated projective $A$-module. The full subcategory of $\Mod_{cdg}(A, h)$ consisting of matrix factorizations will be denoted by
$\MFdg (A, h)$ (and also by $\MFdg (\Spec A, h)$).

\subsubsection{Quasi-Yoneda}
There is a quasi-Yoneda embedding \[\cA \to q\Mod _{cdg} (\cA) , \ x \mapsto \cA ( \ , x) \] generalizing the usual Yoneda embedding in the case of dg categories.
We define the full cdg-subcategory $q\Perf (\cA)$ of  $q\Mod _{cdg} (\cA)$ as the smallest cdg-subcategory containing  $\cA$ and  closed under  
finite operations of finite direct sum, shift, twist,
 and passage to a direct summand; see \cite{BW, PP} for details.  We call an object of $q\Perf (\cA)$ a 
{\em perfect right quasi-$\cA$-module}.
If its curvature vanishes, then call it a perfect right $\cA$-module. We define $\Perf (\cA)$ as the full dg-subcategory of $\Mod _{cdg} (\cA)$ consisting of 
all perfect right $\cA$-modules.

\subsection{Mixed Hochschild complexes}\label{subsec: MHC}

\subsubsection{The category of mixed complexes} 
Consider a $\GG$-graded dg algebra $k[B]$ defined as a 2-dimensional graded $k$-algebra generated by $B$ with relation $B^2=0$, degree $|B|=-1$, and trivial differential.
A dg $k[B]$-module is called a mixed complex.
For example, for a cdg category $\cA$ we will have a mixed complex $\MC (\cA)$ as defined in \eqref{eqn: MC Hoch} below.
A morphism $\phi: M \to M'$ between mixed complexes is a dg $k[B]$-module homomorphism, i.e., a $k$-linear map preserving degree and both differentials.
This defines the category  $\mathrm{Com} (k[B])$ of mixed complexes.
We call $\phi$ a {\em quasi-isomorphism} if it is so as a cochain map $(M, b) \to (M' , b')$.

Let $u$ be a formal variable with degree $2$. Then we have the induced map $\phi (u) : (M\ot _k k[[u]], b + uB) \to (M' \ot_k k[[u]],  b' + uB)$ in the category of complexes. 

\begin{lemma}\label{lem: Mix to Neg} {\em (\cite[Proposition 2.4]{GJ})} 
If $\phi$ is a quasi-isomorphism, then so is $\phi (u) $.
\end{lemma}

\subsubsection{Hochschild complexes}
For any cdg $k$-category $\cA$, we may consider the Hochschild  complex 
$\Hoch (\cA)$ of $\cA$.
Let $\cA (x, y) [1]$ denote the degree shifted $\cA (x, y)$ by $1$.
 The  canonical degree $-1$ map 
$\cA (x, y) \to \cA (x, y) [1]$ is denoted by $\mathsf{s}$ so that $|\mathsf{s} a  | = |a| - 1$.
Let
\begin{multline*}
 \Hoch (\cA) =  \bigoplus _{x \in \cA} \cA (x, x) \oplus \\
 \bigoplus _{n\ge 1} \big( \bigoplus _{(x_0, ..., x_n) \in \cA ^{\ot n+1} } 
 \cA (x_1, x_0) \ot _k \underbrace{\cA (x_2, x_1) [1] \ot_k ... \ot_k  \cA (x_0, x_n) [1]}_{n}\big)  ,
\end{multline*} 
with differential $b:= b_2 + b_1 + b_0$ defined as follows. 
Denote 
\begin{multline} \label{def: tensor deg} 
a_0[a_1 | ... | a_n] := a_0 \ot  \mathsf{s} a_1 \ot ... \ot \mathsf{s} a_n  \\
 \in   \cA (x_1, x_0) \ot  \cA (x_2, x_1) [1] \ot ... \ot  \cA (x_0, x_n ) [1] .\end{multline} 
Then define
\begin{align*} 
b_2 (a_0[a_1 | ... | a_n]) & :=   (-1)^{|a_0|}  a_0 a_1 [ a_2 | ... | a_n] + (-1)^{|a_0| + |a_1| -1}  a_0 [ a_1 a_2 | a_3| ... | a_n]  \\
                           &  +  \cdots  + (-1)^{\sum_{i=0}^{n-1} |a_i | - (n-1)}  a_0[ a_1 | ... | a_{n-1} a_n]  \\
                           &+  (-1) ^{(|a_n|-1 ) (\sum_{i=0}^{n-1} |a_i| - (n-1))} a_n a_0 [ a_1 | ... | a_{n-1} ] ;\\
b_1   (a_0[a_1 | ... | a_n])  & :=   d (a_0) [a_1 | ... | a_n] + (-1)^{|a_0| -1} a_0 [ d (a_1) | a_2 | ... | a_n] \\
                          & + \cdots   +  (-1)^{\sum_{i=0}^{n-1}  |a_i| - n} a_0 [ a_1| ... | a_{n-1} | d (a_n) ] ; \\                    
  b_0    (a_0[a_1 | ... | a_n]) & :=  (-1)^{|a_0|} a_0 [ h | a_1 ... | a_n]  + \cdots  
                         +   (-1)^{ \sum _{i=0}^n |a_i| - n} a_0 [ a_1 | ... | a_{n} | h ] .
                         \end{align*}
    The homology  \begin{equation*}  HH _* (\cA ) =  H^{-*}  (\Hoch (\cA ), b )  \end{equation*}  
    is called the {\em Hochschild homology} of $\cA$.

  When $\cA$ is a curved dg algebra $(A, d, h)$, we write $C (A, d ,h )$ for $C(\cA)$.

\subsubsection{Connes' boundary map}
On the graded $k$-module $\Hoch (\cA)$, there is  another boundary map, the Connes boundary map, $B = (1-t^{-1})sN$,
where  
\begin{align*} t (a_0[a_1| ... | a_n] ) & :=  (-1)^{(|a_0|-1) \sum_{i=1}^n  (|a_i| -1) } a_1[a_2| ...| a_n| a_0]  ;\\
 s (a_0[a_1| ... | a_n] ) & :=   1[a_0| a_1 | ... | a_n] ; \\
 N (a_0[a_1| ... | a_n] ) & :=   \sum _{i=0} ^n t^i (a_0[a_1| ... | a_n] ) . \end{align*}
Consider
\begin{equation}\label{eqn: MC Hoch} \MC (\cA) := (\Hoch (\cA), b, B ). \end{equation}
Since $bB + Bb = 0$,  $\MC (\cA) $ is a mixed complex, called  the {\em mixed Hochschild complex} of $\cA$.

 It is known that $HH_* (\cA )$ vanishes if the cdg category $\cA$ contains an object with a nonzero curvature; see \cite{CT, PP}.
This motivates the following.

\subsubsection{Hochschild complexes of the second kind} 
For a cdg category $\cA$ one can take the underlying graded $k$-module to be
\begin{multline*}  \Hoch^{II} (\cA) =  \bigoplus _{x \in \cA} \cA (x, x) \oplus \\
\prod _{n\ge 1} \Big( \bigoplus _{(x_0, ..., x_n) \in \cA ^{\ot n+1} } 
 \cA (x_1, x_0) \ot _k \underbrace{\cA (x_2, x_1) [1] \ot_k ... \ot_k  \cA (x_0, x_n) [1]}_{n} \Big) ,
\end{multline*} which  has the corresponding differentials $b_i$, $i=0, 1, 2$. This complex is called the {\em Hochschild complex of the
second kind}.

\subsubsection{Normalized Hochschild  complexes}\label{sub: normalized}
Consider a subcomplex $D$ of $\Hoch (\cA)$ generated by elements $a_0[a_1| ... | a_n]$ for which 
$a_i = t \cdot \mathrm{id}_x$ for  $x \in \cA$, $t\in k$ and some $i\ge 1$. Define 
the normalized Hochschild complex 
\[ \overline{\Hoch} (\cA) =  (\Hoch (\cA), b) / D =: \bigoplus _{n\ge 0}   \overline{\Hoch}_n (\cA), \]
where $n$ denotes the tensor degree as in \eqref{def: tensor deg}.  
The Connes operator descends to an operator on  $\overline{\Hoch} (\cA)$, which will be
also denoted by $B$ by the abuse of notation. We let $\overline{\MC} (\cA) := (\overline{C} (\cA), b, B )$.

\begin{Prop} {\em (\cite[\S 5.3]{Loday} and \cite[\S 3]{Her}.) }
For a dg category $\cA$ the natural map 
\begin{equation}\label{eqn: quot I}  \MC  (\cA) \xrightarrow{\sim} \overline{\MC} (\cA) \end{equation}  is a quasi-isomorphism. 
\end{Prop}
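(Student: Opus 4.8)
The plan is to reduce the statement to the classical normalization theorem for mixed complexes of (dg) algebras, together with the fact that Hochschild-type complexes of a dg category are computed by the associated dg algebra on the formal direct sum of objects. First I would recall that the quotient map $\Hoch(\cA)\to\overline{\Hoch}(\cA)$ is, by construction, a morphism of complexes for the differential $b$: the subcomplex $D$ generated by tensors with some inner entry equal to a scalar multiple of an identity is visibly $b$-stable (one checks that each of $b_2$, $b_1$, $b_0$ sends a generator of $D$ into $D$ — for $b_1$ and $b_0$ because $d(\mathrm{id}_x)=0$ and $h_x=0$ in the dg case, and for $b_2$ because contracting an identity either merges two adjacent factors leaving another identity-entry tensor, or, at the two ends, lowers the tensor degree and again produces an element of $D$ or lands in the image of the cyclic-end terms, as in \cite[\S1.1.14, \S1.5]{Loday}). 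So $\overline{\Hoch}(\cA)=(\Hoch(\cA),b)/D$ makes sense.

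Next I would verify that $B$ descends: one must check $B(D)\subseteq D+(\text{degenerate part})$, i.e. that $B$ preserves $D$ after the quotient. The standard argument (Loday \cite[\S2.1.9, \S5.3]{Loday}) shows that $s$ applied to a generator with an interior identity still has an interior identity (the new entry $1=\mathrm{id}$ is in position $0$, not interior, but the old identity entry is shifted and remains interior), and that the cyclic operator $t$ and hence $N$ permutes entries, so an interior identity may move to position $0$; however the combination $B=(1-t^{-1})sN$ is arranged precisely so that those boundary contributions cancel modulo $D$. Here one also invokes that $1[a_1|\dots|a_{j-1}|\mathrm{id}|a_{j+1}|\dots]$ is itself in $D$ when $j\ge 1$, which handles the terms of $sN$ with the original identity landing interior. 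This is essentially the content of the cited references \cite[\S5.3]{Loday} and \cite[\S3]{Her}; I would simply transcribe it, noting that the dg enrichment and the several-objects generality introduce only Koszul signs and a sum over object tuples, no new phenomena.

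The remaining point — and the one I expect to be the only real work — is that the quotient map is a \emph{quasi-isomorphism} for $b$, equivalently that the normalized subcomplex $D$ is acyclic. The classical proof (Loday \cite[Prop.~1.6.5]{Loday}) constructs an explicit contracting homotopy on the "degenerate" bicomplex via an extra-degeneracy/simplicial-normalization argument: one filters $D$ by tensor degree, or uses the standard fact that the inclusion of the normalized chains into all chains of a simplicial $k$-module is a chain homotopy equivalence, and then passes to the dg setting by a spectral-sequence comparison on the filtration by tensor degree (the associated graded of $b$ being $b_2$, with $b_1,b_0$ as higher-filtration corrections). In the curved/dg category setting this is spelled out in \cite[\S3]{Her}; the key is that the filtration is exhaustive and bounded below on $\overline{\Hoch}_n$ for each $n$, so the comparison of associated graded pieces suffices and there are no convergence issues. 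Having established acyclicity of $D$ for $b$, the map \eqref{eqn: quot I} is a quasi-isomorphism of complexes, hence by definition a quasi-isomorphism of mixed complexes, which is the assertion.

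Thus the structure is: (i) $D$ is a $b$-subcomplex; (ii) $B$ descends to the quotient; (iii) $D$ is $b$-acyclic via the simplicial normalization homotopy plus a tensor-degree filtration argument to pass from the underlying simplicial module to the total complex. Step (iii) is the main obstacle, but it is entirely standard and we will just cite \cite[\S5.3]{Loday} and \cite[\S3]{Her}, indicating the dg/several-objects bookkeeping.
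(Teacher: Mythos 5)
Your proposal is correct and follows essentially the same route as the paper, which offers no independent proof but cites exactly the standard normalization argument in Loday \S 5.3 and Herscovich \S 3 that you reconstruct: $D$ is a $b$-subcomplex (with the two adjacent contractions of an interior identity cancelling in pairs), $B$ descends, and $D$ is $b$-acyclic by the simplicial extra-degeneracy homotopy combined with the tensor-degree filtration. One small correction: with the filtration by tensor degree the associated graded differential is $b_1$ (the internal differential), while $b_2$ lowers filtration and only appears as the $d_1$ of the spectral sequence, so the classical normalization is invoked on the $E_1$ page (the degenerate part of the Hochschild complex of the cohomology category); the filtration is still exhaustive and bounded below, so the conclusion is unaffected.
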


Likewise, there is  the normalized Hochschild complex $\overline{\Hoch}^{II} (\cA)$ of the second kind.
There are corresponding Connes' operators $B$ and mixed complexes
 $\MC ^{II} (\cA) := (C^{II} (\cA), b, B )$, $\overline{\MC} ^{II} (\cA) := (\overline{C}^{II} (\cA), b, B )$.

\begin{Prop} {\em (\cite[Proposition~3.15]{BW}) }  For a cdg category $\cA$ the quotient map 
 \begin{equation}\label{eqn: BW 3.9}      quot^{II}:  \MC ^{II} (\cA) \xrightarrow{\sim} \overline{\MC}^{II} (\cA) \end{equation}     is a quasi-isomorphism. 
\end{Prop}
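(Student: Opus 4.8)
The plan is to identify the kernel of $quot^{II}$ as the (completed) degeneracy subcomplex and to show it is acyclic, reducing to the classical simplicial normalization theorem; the only genuinely new point compared with the first‑kind statement \eqref{eqn: quot I} is a convergence argument for the product (``second kind'') complex.

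Write $D^{II} = \prod_{n\ge 1} D_n \subseteq \Hoch^{II}(\cA)$, where $D_n$ is spanned by the elements $a_0[a_1|\dots|a_n]$ having $a_i = t\cdot 1_x$ for some $t\in k$, $x\in\cA$ and some $i\ge 1$, so that by definition $\overline{\Hoch}^{II}(\cA) = \Hoch^{II}(\cA)/D^{II}$ and $quot^{II}$ is the projection. First I would check that $D^{II}$ is a subcomplex for $b = b_2 + b_1 + b_0$. For $b_1$ this is clear from $d_{x,x}(1_x) = 0$ (apply the Leibniz rule to $1_x = 1_x\circ 1_x$): $b_1$ either annihilates the identity slot or leaves it untouched. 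For $b_0$ it is immediate from the formula, since inserting the curvature $h$ merely shifts the identity slot to another slot $\ge 1$. For $b_2$ it is the classical observation behind the fact that the degeneracy subcomplex of a simplicial module is a subcomplex: after the routine Koszul‑sign rescaling that turns the $b_2$ of \S\ref{subsec: MHC} into the alternating sum of face maps $\sum_i (-1)^i d_i$, the two faces that merge the identity slot with a neighbour produce the same term with opposite signs and cancel, and every surviving term still contains an identity slot.

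Granting this, the short exact sequence of complexes
\[ 0 \longrightarrow (D^{II}, b) \longrightarrow (\Hoch^{II}(\cA), b) \xrightarrow{\;quot^{II}\;} (\overline{\Hoch}^{II}(\cA), b) \longrightarrow 0 \]
and its long exact homology sequence reduce the proposition to the acyclicity of $(D^{II}, b)$. Here I would argue by perturbation from the uncurved differential. Decompose $b = b_2 + \delta$ with $\delta := b_1 + b_0$; splitting the identity $b^2 = 0$ into its tensor‑degree homogeneous pieces gives $b_2^2 = 0$ and $b_2\delta + \delta b_2 + \delta^2 = 0$, so $\delta$ is a perturbation of $b_2$. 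The complex $(D^{II}, b_2)$ is the tensor‑degree completion of the degeneracy subcomplex of the cyclic $k$‑module underlying $\Hoch(\cA)$; by the classical normalization (\cite[\S5.3]{Loday}, \cite{Her}) it carries an explicit contracting homotopy $\mathsf{h}$, assembled from degeneracy operators, with $b_2\mathsf{h} + \mathsf{h}b_2 = \mathrm{id}$ (and $\mathsf{h}^2 = 0$ after the usual adjustment), built out of degreewise maps so that it extends verbatim to the product $D^{II} = \prod_n D_n$, raising tensor degree by exactly $1$. Since $\delta$ preserves $D^{II}$ and raises tensor degree by $0$ or $1$, the composite $\mathsf{h}\delta$ raises tensor degree by at least $1$; as $D^{II}$ is complete and Hausdorff for the decreasing filtration by tensor degree, the series $\mathsf{h}' := \sum_{m\ge 0}(-1)^m(\mathsf{h}\delta)^m\mathsf{h}$ converges on $D^{II}$, and the homological perturbation lemma gives $b\,\mathsf{h}' + \mathsf{h}'\,b = \mathrm{id}$. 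Thus $(D^{II}, b)$ is contractible, hence acyclic, and $quot^{II}$ is a quasi‑isomorphism.

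The main obstacle — indeed the only place the argument departs from the first‑kind case — is the passage from a direct sum to a product over tensor degrees: one must ensure that the classical contracting homotopy still makes sense on $\prod_n D_n$ (it does, being degreewise) and, more importantly, that the perturbation series converges there. This is exactly where the fact that $b_1$ preserves and $b_0$ strictly raises tensor degree does the work: every correction term strictly raises tensor degree, so only finitely many contribute to each component of the output, and completeness of the tensor‑degree filtration on the product complex does the rest.
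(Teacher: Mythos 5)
The paper does not actually prove this proposition; it is quoted verbatim from \cite[Proposition~3.15]{BW}, so there is no in-paper argument to compare against. Judged on its own, your proof is correct and is essentially the standard normalization argument adapted to the product totalization, which is also the route taken in the cited source. You correctly isolate the two load-bearing points: (a) the classical contracting homotopy of the degenerate subcomplex for $b_2$ is built degreewise from degeneracy operators, raises tensor degree by exactly $1$, and therefore extends to $\prod_n D_n$; and (b) since $b_1$ preserves and $b_0$ raises tensor degree while $\mathsf{h}$ raises it by $1$, the operator $(\mathsf{h}\delta)^m\mathsf{h}$ raises tensor degree by at least $m+1$, so each component of the perturbation series is a finite sum and the homological perturbation lemma applies. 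Two small points you should make explicit rather than wave at: first, the existence of the explicit natural contraction of the degenerate subcomplex satisfying $b_2\mathsf{h}+\mathsf{h}b_2=\mathrm{id}$ (with side conditions arrangeable by the replacements $\mathsf{h}\mapsto(1-\pi)\mathsf{h}(1-\pi)$ and $\mathsf{h}\mapsto\mathsf{h}b_2\mathsf{h}$, neither of which changes the degree-raising behaviour) is a citation to the literature on simplicial normalization, and one should check it survives the Koszul signs of the graded setting --- it does, since the simplicial identities are exactly what make $b_2^2=0$; second, since the statement concerns mixed complexes, one should also observe that Connes' operator $B=(1-t^{-1})sN$ preserves $D^{II}$ (the identity slot is merely permuted, and if it lands in slot $0$ the extra degeneracy $s$ pushes it back to slot $1$), so that $quot^{II}$ is a morphism of mixed complexes; by the paper's convention a quasi-isomorphism of mixed complexes only tests the $b$-differential, which is what your argument handles.
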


\subsubsection{Negative cyclic complexes}
Let $u$ be a formal variable with degree $2$.
Then we can make a  total complex 
\[ (\Hoch (\cA )[[u]] : = \Hoch (\cA) \ot _k k[[u]] , b + uB)  \]  It  is called the  negative cyclic complex  of $\cA$ 
and its homology
 \begin{equation*}  HN _* (\cA ) =  H^{-*}  (\Hoch (\cA )[[u]], b + uB)  \end{equation*}
 is called the {\em negative cyclic homology} of $\cA$.
Similarly, one may define   its variants 
  \begin{equation*}   \overline{HN} _* (\cA )  , 
  HN _*^{II}  (\cA ) ,  \overline{HN}^{II} _* (\cA )  .  \end{equation*}

\subsubsection{Functoriality}
Let $\text{cdg-cat}_k$ denote the category of small cdg $k$-categories. The morphisms between them are cdg functors. 
We define a functor 
  \begin{equation*}  \text{cdg-cat}_k \xrightarrow{\overline{\MC}^{II}} \mathrm{Com} (k[B]) ; \cA , (F, \ka) \mapsto (\oHoch ^{II} (\cA), b, B),  (F, \ka)_*  \end{equation*}
by letting  \begin{multline*} (F, \ka )_*  (a_0[a_1| ... | a_n] ) 
 :=  \sum_{n=0}^{\infty} \sum _{(j_0, ..., j_n) \in \ZZ_{\ge 0} ^{n+1}}  (-1)^{j_0 + ... + j_n}  \\  F(a_0) [\underbrace{\ka _0 | ... | \ka _0}_{j_0} |  
F(a_1) | \underbrace{\ka _1  |...| \ka_1}_{j_1} | F(a_2) | ... | F(a_n) | \underbrace{\ka _n  | ... | \ka _n}_{j_n}  ]     .  \end{multline*}
Here $\ka _i : = \ka _{\text{domain of } F (a_{i})} $. This is indeed a functor; see \cite{BW, PP}. 

\begin{Rmk}
For a cdg functor $(F, \ka)$, it induces a cochain map $(\Hoch ^{II} (\cA), b) \to (\oHoch ^{II} (\cA '), b')$ but not necessarily a morphism 
of mixed complexes  $\MC ^{II} (\cA) \to \MC^{II} (\cA ')$; see \cite[Remark 3.21]{BW}.
\end{Rmk}

Let $\text{cdg-cat}^{st}_k$ denote  the category of small cdg $k$-categories whose morphisms are {\em strict} cdg functors.
Let $\text{dg-cat}_k$ be the full subcategory of $\text{cdg-cat}^{st}_k$ whose objects are dg categories.
In the same way we have natural functors  
\begin{align*} \text{cdg-cat}^{st}_k  & \xrightarrow{\overline{\MC} } \mathrm{Com} (k[B]) ; \cA , F \mapsto (\oHoch   (\cA), b, B),  F_* \\
 \text{dg-cat}_k &  \xrightarrow{\MC} \mathrm{Com} (k[B]) ; \cA , F \mapsto (\Hoch  (\cA), b, B),  F_* \end{align*}
where $F_*:= (F, 0)_*$.

\subsection{Some invariances}\label{sub: inv}

\subsubsection{Morita invariances}\label{sub: mor}
Let $F: \cA \to \cB$ be a dg functor between dg categories $\cA$, $\cB$. We say that it is {\em cohomologically full and faithful}
if  for every $x, y \in \cA$, the cochain map $F: \cA (x, y) \to \cB (Fx, Fy)$ is a quasi-isomorphism.
Furthermore, if $\cB$ is split-generated by the image of $F$, then $F$ induces a Morita equivalence between $\cA$ and $\cB$.
Thus the morphism 
\begin{equation}\label{eqn: mor}  F_* : \MC (\cA) \xrightarrow{\sim} \MC (\cB) \end{equation} 
is a quasi-isomorphism; see  \cite[Lemma 4.12]{She} and references therein.
The Yoneda embedding $\cA \to \Perf \cA$ is such an example. 
Furthermore if $F$ is essentially surjective, $F$ is called a {\em quasi-equivalence}. 

\subsubsection{Invariance under Pseudo-equivalences}\label{subsub inv pseudo}
Let $(F, \ka): \cA \to \cB$ be a cdg functor between cdg categories $\cA$, $\cB$. We say that $(F, \ka)$ is a {\em pseudo-equivalence}
if $\cA (x, y) \to \cB (Fx, Fy)$ is an isomorphism for all $x, y \in \cA$ and every object of $\cB$ can be constructed from the image of $F$
by a finite sequence of operations of finite direct sum, passage to a direct summand, twist, and shift.

\begin{Thm} {\em (Polishchuk-Positselski \cite{PP})}
For a pseudo-equivalence $(F, \ka)$
\begin{equation}\label{eqn: inv pseudo} (F, \ka)_* : \overline{\MC} ^{II} (\cA) \xrightarrow{\sim} \overline{\MC} ^{II} (\cB) \end{equation}
 is a quasi-isomorphism.
 \end{Thm}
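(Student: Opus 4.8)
The plan is a d\'evissage: reduce the pseudo-equivalence to a finite chain of one-object extensions of four elementary types, and for each type check that the corresponding inclusion of second-kind Hochschild complexes is a quasi-isomorphism by an explicit contracting homotopy on the quotient; the twist type is the crux. Since $(F,\ka)$ induces isomorphisms $\cA(x,y)\cong\cB(Fx,Fy)$, we may assume $\cA\subseteq\cB$ is a full cdg-subcategory and $F$ the inclusion, and that every object of $\cB$ is built from $\mathrm{Ob}(\cA)$ by finitely many operations of finite direct sum, passage to a direct summand along a closed degree-$0$ idempotent, twist by a degree-$1$ element, and shift. Listing the objects of $\cB$ outside $\cA$ as $z_1,\dots,z_N$ in an order in which each $z_m$ is built by one operation from $\mathrm{Ob}(\cA)\cup\{z_1,\dots,z_{m-1}\}$, and letting $\cB_m$ be the full cdg-subcategory on $\mathrm{Ob}(\cA)\cup\{z_1,\dots,z_m\}$, we write $(F,\ka)_*$ as the composite of the inclusions $\overline{\MC}^{II}(\cB_{m-1})\hookrightarrow\overline{\MC}^{II}(\cB_m)$. (If $\mathrm{Ob}(\cB)$ is infinite, first reduce to finitely many objects using that $\overline{C}^{II}$ commutes with filtered colimits over full cdg-subcategories.) Thus it suffices to treat one step $\cC\hookrightarrow\cC'=\cC\cup\{z\}$.

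In each step the differential of $\cC$ is unchanged, so $\overline{C}^{II}(\cC)$ is a subcomplex of $\overline{C}^{II}(\cC')$ for the Hochschild differential $b$, and --- since for mixed complexes a quasi-isomorphism is by definition a quasi-isomorphism of the underlying $b$-complexes --- the step is a quasi-isomorphism iff the quotient $Q:=\overline{C}^{II}(\cC')/\overline{C}^{II}(\cC)$, spanned by the chains in which $z$ occurs at least once, is $b$-acyclic. For a direct summand $z=(x,e)$ with $e=e^2\in\cC(x,x)^0$ closed, one contracts $Q$ by the standard idempotent-splitting homotopy (replace an occurrence of $z$ by $x$, inserting $e$ into the adjacent slots); for a finite direct sum $z=x_1\oplus x_2$ one contracts $Q$ by the generalized trace homotopy coming from $1_z=\pi_{x_1}+\pi_{x_2}$ (Morita invariance); for a shift $z=x[1]$ one uses the analogous suspension homotopy, $\cC'(z,z)\cong\cC(x,x)$ and $\cC'(z,-),\cC'(-,z)$ being shifts of $\cC(x,-),\cC(-,x)$. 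In these three cases the homotopy is supported in a bounded band of tensor degrees and acyclicity of $Q$ is a direct verification.

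The essential case is the twist $z=(x,\beta)$, $\beta\in\cC(x,x)^1$, so $\delta_z=\delta_x+\beta$ and $h_z=h_x+d\beta+\beta^2$. On chains through $z$ the differential of $\cC'$ differs from the untwisted one by insertions of $\mathsf{s}\beta$ (from the modified $d$) and of $\mathsf{s}(d\beta)$ and $\mathsf{s}(\beta^2)$ (from the modified curvature), the last two \emph{raising} the tensor degree by one. One contracts $Q$ --- equivalently, gauges the twisted differential to the untwisted one --- by conjugating with $\exp(\iota_\beta)$, where $\iota_\beta$ inserts one $\mathsf{s}\beta$, summed over the slots. Because the $j$-th term of $\exp(\iota_\beta)$ raises the tensor degree by $j$, this series is a well-defined operator on the \emph{product} $\prod_n$ used to define $\overline{C}^{II}$ but diverges on the direct sum $\bigoplus_n$; this is precisely why the theorem holds for the second-kind complex and not for the first-kind one. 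Tracking the Koszul signs, one checks that $\exp(\iota_\beta)$ indeed intertwines the twisted $b$ with the untwisted $b$, so $Q$ is $b$-acyclic and the step is a quasi-isomorphism.

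Composing the $N$ elementary quasi-isomorphisms yields that $(F,\ka)_*$ is a quasi-isomorphism. I expect the twist step to be the real obstacle: it is what forces the passage from $\bigoplus_n$ to $\prod_n$ (hence ``of the second kind''), and it requires a careful combinatorial set-up of the iterated insertion operators both to prove convergence of $\exp(\iota_\beta)$ in $\prod_n$ and to verify --- with all signs --- that this conjugation carries the twisted differential to the untwisted one. The shift, direct-sum and direct-summand steps are routine once one works with the normalized second-kind complex, since their homotopies occupy only a bounded range of tensor degrees.
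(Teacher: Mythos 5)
The paper does not prove this statement --- it is quoted from Polishchuk--Positselski \cite{PP} --- so I am comparing your argument against that source, whose strategy is indeed the d\'evissage into shift/sum/summand/twist steps that you propose. Your analysis of the twist step, which you rightly single out as the crux, is essentially correct and essentially theirs: the untwisting map is nothing but the functoriality map $(\mathrm{id},\ka)_*$ of $\overline{\MC}^{II}$ for the identity-on-objects cdg functor with $\ka_z=\pm\beta$, its two-sided inverse is the one with the opposite sign by the composition rule for quasi-cdg functors, and the tensor-degree-raising insertions are exactly why the product totalization $\prod_n$ is indispensable. Two small wrinkles there: the modified differential multiplies $\beta$ into an existing slot rather than inserting a new one (only the curvature terms $d\beta$ and $\beta^2$ insert); and after conjugating you are not yet done, since the untwisted quotient is the ``adjoin a strictly isomorphic copy'' case and still needs its own contracting homotopy.

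The genuine gap is the reduction to a finite chain of one-object extensions. The parenthetical claim that $\overline{C}^{II}$ commutes with filtered colimits over full cdg-subcategories is false, for precisely the reason that distinguishes the second kind from the first: $\overline{C}^{II}$ is a product over tensor degrees, so an element may involve a different finite set of objects in each tensor degree and hence infinitely many objects in total, and $\mathrm{colim}_\alpha\,\overline{C}^{II}(\cB_\alpha)\to\overline{C}^{II}(\cB)$ is not even surjective. Since a pseudo-equivalence only requires each object of $\cB$ to be reachable in finitely many steps, with no uniform bound and with no guarantee that the intermediate objects of a construction lie in $\cB$, your induction then has no way to exhaust $\cB$ --- and the infinite case is exactly the one the paper needs, e.g.\ for $\cA\to q\Perf(\cA)$. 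The repair, which is how \cite{PP} proceed, is to perform each operation in bulk (adjoin all shifts, then all finite direct sums, then all twists, then all direct summands), to use the commutation relations among the operations (a sum of twists is a twist of a sum, a twist of a summand $(N,e)_\beta=(N_\beta,e)$ is a summand of a twist, etc.) to see that $q\Perf(\cA)$ is reached after a bounded number of bulk steps, and to note that each bulk step's homotopy or untwisting isomorphism is uniform in the added objects and either shifts tensor degree by a bounded amount or converges in $\prod_n$, hence is defined on the whole product. A general target $\cB$ is then handled by two-out-of-three applied to $\cA\to\cB\to q\Perf(\cA)$. With that replacement, the rest of your argument --- the quotient-acyclicity criterion and the three Morita-type homotopies --- goes through.
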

The embeddings $\cA \to q\Perf \cA$ and $\Perf \cA \to q\Perf \cA$ are pseudo-equivalences; see \cite{PP}.

\subsubsection{Comparison}\label{subsub comp I II}
Let $X$ be an affine $k$-scheme and let $w : X \to \mathbb{A}^1_k$ be a regular function.
Then we consider the dg category $\MFdg  (X, w)$ of matrix factorizations for $(X, w)$; see \S~\ref{subsub matrix affine}.  

\begin{Prop}\label{prop: I to II}  {\em (\cite[\S 4.8 Corollary A]{PP}) }  If $X$ is a smooth affine scheme finite type over $k$ and $w$ has no other critical values but zero, 
then the natural map 
\begin{equation}\label{eqn: I to II}  C (\MFdg (X, w) ) \xrightarrow{\sim} C^{II}(\MFdg (X, w) )  \end{equation}
is  a quasi-isomorphism. 
\end{Prop}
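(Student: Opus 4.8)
Since the assertion is \cite[\S 4.8 Corollary A]{PP}, the plan is to recall the strategy of Polishchuk--Positselski and to point out where the hypotheses enter. First I would note that $\MFdg(X,w)$ falls within the scope of their general comparison theorem. Writing $A := \Gamma(X,\cO_X)$, the category $\MFdg(X,w)$ is $A$-linear, its objects are finitely generated projective $A$-modules with a curved differential, and its $\Hom$-complexes are finitely generated over $A$; moreover, since $X$ is smooth affine of finite type over $k$, both $A$ and $A\ot_k A = \Gamma(X\times X,\cO_{X\times X})$ are regular Noetherian of finite Krull dimension, hence of finite global homological dimension. The map \eqref{eqn: I to II} is, by construction, the inclusion of $C(\MFdg(X,w))$, which is a direct sum over tensor degree, into $C^{II}(\MFdg(X,w))$, the corresponding direct product; it is a map of filtered complexes, and the whole issue is the convergence of the two totalizations.

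The heart of the argument is to exploit the finite homological dimension of $A\ot_k A$. Both complexes are assembled, over $A\ot_k A$, from the bar construction, which is an unbounded resolution of the diagonal cdg-bimodule $A$: the first kind totalizes it by direct sums and the second by direct products over tensor degree, so the two are not a priori quasi-isomorphic. Since $A$ has finite projective dimension as a module over $A\ot_k A$, the diagonal bimodule admits a \emph{bounded} resolution by finitely generated bimodules that are free up to shifts, twists and summands; over such a bounded resolution the direct-sum and direct-product totalizations coincide on the nose, and a change-of-resolution argument identifies both $C(\MFdg(X,w))$ and $C^{II}(\MFdg(X,w))$ with this common model, whence \eqref{eqn: I to II} is a quasi-isomorphism. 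In \cite{PP} this is carried out in the coderived/absolute-derived framework adapted to the nonzero curvature $w$; the hypothesis that $0$ is the only critical value of $w$ is a standing assumption there, which guarantees that the relevant derived categories of the second kind are generated by perfect complexes supported on the critical locus, so that the bounded-resolution argument goes through. The Morita and normalization quasi-isomorphisms \eqref{eqn: mor}, \eqref{eqn: quot I}, \eqref{eqn: BW 3.9}, \eqref{eqn: inv pseudo} allow one to pass freely between $\MFdg(X,w)$ and whatever concrete model --- for instance the endomorphism dg algebra of a compact generator, which is again curvature-free because its generator is --- is most convenient for running this.

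I expect the main obstacle to be precisely this second step, and within it the change-of-resolution argument in the presence of the curvature. For an ordinary dg algebra one would simply invoke that Hochschild homology is computed by any perfect bimodule resolution, but once the curvature $w$ is remembered one must work in a derived category of the second kind, where an honest finite bimodule resolution of $A$ need not exist and convergence is delicate; one has instead to track how the curvature-insertion terms $b_0$ behave on passing between the bar model and the bounded model, and to verify that the comparison map is a quasi-isomorphism at the level of totalizations and not merely on the associated graded objects of the tensor-degree filtration. Once this is in place --- which is exactly \cite[\S 4.8 Corollary A]{PP} --- the proposition follows.
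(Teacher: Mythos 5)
There is a gap: you treat the whole proposition as a single citation of \cite[\S 4.8 Corollary A]{PP} and sketch the strategy behind that corollary, but that corollary does not cover the case $w=0$, which the proposition explicitly allows (and which the paper needs, e.g.\ for the $\GG=\ZZ$ setting where $\MFdg(X,0)$ is a model for perfect complexes). The paper's proof first reduces to connected $X$ and then splits into two cases: for $w\ne 0$ it invokes \cite[Corollary A]{PP} (the relevant hypothesis being that $k$ is perfect, automatic here in characteristic zero); for $w=0$ it gives a separate, elementary argument using the tensor-degree grading of $C^{II}(\Gamma(X,\cO_X))$ together with the classical Hochschild--Kostant--Rosenberg isomorphism --- the point being that $HH_n(\Gamma(X,\cO_X))\cong \Omega^n_{X/k}(X)$ vanishes for $n>\dim X$, so the direct-sum and direct-product totalizations over tensor degree have the same cohomology. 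Your proposal never isolates this case, and your suggested mechanism (``generated by perfect complexes supported on the critical locus'') is precisely the part that degenerates when $w$ vanishes identically on a component.

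Beyond that, for the $w\ne 0$ case your reconstruction of the Polishchuk--Positselski argument (bar resolution, finite homological dimension of $A\ot_k A$, change of resolution in a derived category of the second kind) is a reasonable gloss, but you yourself flag the convergence/change-of-resolution step as the main obstacle and defer it back to \cite{PP}; since the paper also simply cites that corollary rather than reproving it, this part is acceptable, but it does not substitute for the missing $w=0$ argument.
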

\begin{proof} 
Without loss of generality we may assume that $X$ is connected. 
When $w\ne 0$, this follows from  \cite[Corollary A]{PP}, since $k$ is a perfect field. 
When $w=0$, this follows from considering the tensor grading of  $C^{II} (\Gamma (X, \cO _X))$ and the HKR isomorphism of \cite{HKR}.
\end{proof}

\section{Categorical Chern characters}\label{sec: ccc}

In this section  we recall the definition of categorical Chern characters
and discuss alternative expressions of them which we will use in
 \S~\ref{sub: orb} for Chern character formulae.

\subsection{Chern characters $\Ch_{HH} (P)$ and  $\Ch _{HN} (P)$} 
For every object $P$ of a dg category $\cA$, note that
the identity map $1_P$ is a degree $0$ cocycle of the Hochschild complex of $\cA$. Hence each $P$ determines the class $[1_P]$ in $HH_0 (\cA)$.
We call $[1_P]$ the Hochschild homology valued Chern character  of $P$, denoted  by $\Ch_{HH} (P)$.
Likewise, since $1_P$ is a cocycle of the normalized negative cyclic complex of $\cA$, we simply define
the negative cyclic homology valued Chern character $\Ch_{HN} (P)$ of $P$  to be the class $[1_P]$ in $\overline{HN}_0 (\cA) \cong HN_0 (\cA)$.

For a cdg category $\cA$ and $P \in q\Perf \cA$ we define Chern characters $\Ch^{II}_{HH} (P)$, $\Ch^{II}_{HN} (P)$ of the second kind in the same manner
using $1_P$ in the normalized complexes of the second kind.

\subsection{Alternative expression via direct summands}\label{sub: alt}

Let $P$ and $N$ be objects of a dg category $\cA$.
Suppose that $P$ is a direct summand of $N$, i.e., there are $g : P \to N$ 
and $f: N \to P$ degree $0$ closed homomorphisms such that $f \circ g = 1_P$.
Note that $1_P$ and the idempotent $\pi : = g \circ f$ are homologous
\begin{equation}\label{eqn: 1  pi} 1_P \sim \pi \end{equation}
 in the Hochschild complex of $\cA$; see \cite{Shk: HRR}.
If $N$ is simpler than $P$, then $\pi$ is often easier to handle than $1_P$.

\subsubsection{}\label{sub: semi hom 1}
More generally, every element of $C (\End _{\cA} P)$ is homologous to an element of $C(\End_{\cA} N)$ in the Hochschild complex of $\cA$:
Consider the inclusion functor $inc$ from  $\{N\}$ to $\{ P, N\}$ between the full dg subcategories of
$\cA $ consisting only one indicated object, two indicated objects respectively.
As a semifunctor (i.e., a dg functor not necessarily preserving identities between dg categories), $inc$ has a left inverse $F$.
\[ \{ N \} \xleftarrow{F} \{ P, N \} \xleftarrow{inc} \{ N \} .  \]
The functor $F : \{ P, N \} \to \{ N \}$ is determined by sending 
$\ka \mapsto  g \circ \ka \circ f $ for $\ka \in \End P$, $\ka ' \mapsto g \circ \ka '$ for $\ka ' \in \Hom (N, P)$,
$\ka '' \mapsto \ka '' \circ f $ for $\ka '' \in \Hom (P, N)$, and $\beta \mapsto \beta$ for $\beta \in \End (N)$.

\begin{lemma}\label{lem: PN} 
In the Hochschild complex $( \Hoch \{ P, N \}, b) $, 
every element $\ka _0 [ \ka _1 | .... | \ka _n]$ is  homologous to  $F(\ka_0 ) [ F(\ka_1 ) | ... | F(\ka_n) ] $.
\end{lemma}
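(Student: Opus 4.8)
The plan is to reduce Lemma~\ref{lem: PN} to the single statement that the chain endomorphism of $\Hoch\{P,N\}$ induced by the composite semifunctor $inc\circ F\colon\{P,N\}\to\{P,N\}$ is chain-homotopic to the identity. First I would record the relevant functoriality. Although $F$ and $inc$ are only semifunctors (they need not preserve identities), the Hochschild differential $b=b_{1}+b_{2}$ of a dg category — the curvature term $b_{0}$ vanishes, since $\{P,N\}$ is a dg category — is built out of compositions and internal differentials alone, so a semifunctor between dg categories still induces a chain map on Hochschild complexes. Write $F_{*}\colon\Hoch\{P,N\}\to\Hoch\{N\}$ and $inc_{*}\colon\Hoch\{N\}\to\Hoch\{P,N\}$ for these induced maps; by construction
\[ inc_{*}F_{*}\bigl(\ka_{0}[\ka_{1}|\cdots|\ka_{n}]\bigr)=F(\ka_{0})[F(\ka_{1})|\cdots|F(\ka_{n})] , \]
and $F\circ inc=\mathrm{id}_{\{N\}}$ gives $F_{*}\circ inc_{*}=\mathrm{id}$. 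Hence it suffices to show $inc_{*}F_{*}\simeq\mathrm{id}_{\Hoch\{P,N\}}$; then every $b$-cocycle of the form $\ka_{0}[\ka_{1}|\cdots|\ka_{n}]$ is homologous to $F(\ka_{0})[F(\ka_{1})|\cdots|F(\ka_{n})]$, which is the form in which the lemma is applied afterwards (e.g.\ $\ka_{0}=1_{P}$, giving $1_{P}\sim g\circ f=\pi$ as in \cite{Shk: HRR}).

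To build the homotopy I would use that the transition maps package into natural transformations of semifunctors: $\eta\colon\mathrm{id}_{\{P,N\}}\Rightarrow inc\circ F$ with $\eta_{P}=g$, $\eta_{N}=1_{N}$, and $\bar\eta\colon inc\circ F\Rightarrow\mathrm{id}_{\{P,N\}}$ with $\bar\eta_{P}=f$, $\bar\eta_{N}=1_{N}$; the naturality squares, together with $\bar\eta\circ\eta=\mathrm{id}$, follow at once from $f\circ g=1_{P}$. Out of $\eta$ I would write down the standard insertion homotopy $h\colon\Hoch\{P,N\}\to\Hoch\{P,N\}$ — a signed sum, over a position $j$, of the chains obtained from $\ka_{0}[\ka_{1}|\cdots|\ka_{n}]$ by replacing $\ka_{0},\dots,\ka_{j}$ by their $(inc\circ F)$-images and inserting $\eta$ in the following slot — supplemented by a further family of terms that insert $\bar\eta$ at the cyclic wrap-around so that the word closes up; verifying $bh+hb=\mathrm{id}-inc_{*}F_{*}$ is then a bookkeeping exercise with the two naturality identities and $f\circ g=1_{P}$. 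As a shortcut and a sanity check, the conclusion can also be obtained abstractly: over the field $k$, $\Hoch\{P,N\}$ is a complex of $k$-vector spaces, $inc_{*}$ is a quasi-isomorphism by Morita invariance (\S\ref{sub: mor}, since $P$ is a direct summand of $N$, so $\{N\}$ split-generates $\{P,N\}$), and $F_{*}inc_{*}=\mathrm{id}$ forces $F_{*}$ to be a quasi-isomorphism with $F_{*}$ and $inc_{*}$ mutually inverse on homology; so $inc_{*}F_{*}$ is the identity on homology and, complexes of vector spaces being formal, is chain-homotopic to the identity.

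The only genuine difficulty I anticipate is combinatorial rather than conceptual: pinning down all Koszul signs in $h$ and, above all, handling the cyclic term — the $a_{n}a_{0}[\cdots]$ summand of $b_{2}$ — for which the naive bar-style insertion sum does not telescope; this is exactly where the $\bar\eta$-correction terms enter, and their contribution to $bh+hb$ must be matched against what is left over. I would fix all conventions by first doing the base case $n=0$ by hand — there, for a cocycle $\ka_{0}$, the homotopy is essentially the single term $\pm g[\ka_{0}f]$.
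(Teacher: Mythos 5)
Your closing ``shortcut'' is, almost verbatim, the paper's entire proof: $inc_*$ is a quasi-isomorphism by Morita invariance ($P$ is a retract of $N$ via the closed degree-$0$ maps $g,f$, so $\{N\}$ split-generates $\{P,N\}$), the identity $F\circ inc=\mathrm{id}_{\{N\}}$ gives $F_*\circ inc_*=\mathrm{id}$ and hence forces $F_*$ to be a quasi-isomorphism too, and then $F_*(m)=F_*\bigl(inc_*F_*(m)\bigr)$ shows that the cocycles $m$ and $inc_*F_*(m)=F(\ka_0)[F(\ka_1)|\cdots|F(\ka_n)]$ are homologous. So the lemma is already fully proved by the last third of your write-up, and you should lead with that. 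The explicit insertion homotopy built from $\eta$ and $\bar\eta$ is correct in outline (your naturality checks reduce to $f\circ g=1_P$, and your worry about the cyclic summand $a_na_0[a_1|\cdots|a_{n-1}]$ of $b_2$ is exactly where the naive bar-style homotopy fails to telescope and both transformations must enter), but it proves strictly more than is needed: the lemma is a statement about homology classes and is only ever applied to cocycles such as $1_P$, so the identity $[inc_*]\circ[F_*]=\mathrm{id}$ on homology suffices and no chain homotopy has to be exhibited. Two small cautions: first, your reduction to $\Hoch\{P,N\}$ rather than the normalized complex is essential, since $F(1_P)=\pi\neq 1_N$ means $F_*$ does not descend to $\overline{\Hoch}$; second, the phrase ``every element'' in the statement (which the paper shares) should be read as ``every cocycle'', since for a non-cocycle $m$ even a chain homotopy $h$ only yields $m-inc_*F_*(m)=b(h(m))+h(b(m))$, which need not be a boundary.
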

\begin{proof}
The semifunctor $F$ induces a cochain map  $F_* :  ( \Hoch \{ P, N \}, b)   \to (\Hoch \{ N \}, b) $,
which is a left inverse of the quasi-isomorphism $inc_* :   (\Hoch \{ N \}, b)  \to (\Hoch \{ P, N \} , b)$
induced from $inc$. Therefore for every $m\in \Hoch \{ P, N \}$, $m$ is homologous to $inc _* (F_*(m))$, since $F_*(m) = F_* (inc_* (F_* (m)))$.
\end{proof}

\begin{Rmk}\label{rmk: a, ta} 
We can generalize Lemma~\ref{lem: PN}: Consider two full dg subcategories $\cC$ and $\cD$ of $\Mod _{cdg} \cA$ such that $\cC$ is a subcategory of $\cD$ 
and  any object of $\cD$ is a direct summand of an object of $\cC$. Then we can construct a semifunctor from
$\cD \to \cC$ which is a left inverse of the inclusion $\cC \to \cD$ and hence a left inverse of the quasi-isomorphism
$(\Hoch (\cC ), b) \to (\Hoch (\cD), b)$. 
\end{Rmk}

\subsubsection{}\label{subsub: eta pi}
There is a negative cyclic homology version of \eqref{eqn: 1 pi}:
Consider a cycle  \[ \gamma _P :=   1_P +   \sum _{i=1}^{\infty}        (-1)^{i}  \frac{(2i)!}{2(i!)}  2 \cdot 1_P  [\underbrace{1_P |  \cdots | 1_P }_{2i} ] u^{i}  \]
in $(\Hoch \{ P \}  [[u]] , b + uB)$. It projects to $1_P$ in $(\overline{\Hoch}\{ P \}  [[u]] , b + uB)$.
 Let 
 \begin{align}\label{eqn: eta pi}  \eta _{\pi} & :=      \pi +   \sum _{i=1}^{\infty}        (-1)^{i}  \frac{(2i)!}{2(i!)}  (2\pi -1_N)  [\underbrace{\pi|  \cdots | \pi}_{2i} ] u^{i}  \\  \nonumber
   & =  \pi - (2\pi - 1_N) [ \pi | \pi] u + 6(2 \pi - 1_N)[ \pi| \pi | \pi | \pi]  u^2 + \cdots . \\ \nonumber
 \end{align}
It is straightforward to check that $\eta _{\pi}$ is also a cycle in $(\overline{C} \{ N \} [[u]], b + uB)$.

\begin{Prop}\label{prop: eta}  {\em (Proposition \ref{prop: eta appendix})}The two cycles $1_P$ and $\eta _{\pi}$ are  homologous in $(\overline{C} \{ P, N \} [[u]], b+ uB)$.
\end{Prop}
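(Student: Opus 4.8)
The plan is to reduce the statement to the already-established homology relation $1_P \sim \pi$ in the \emph{Hochschild} complex, and then promote it to the negative cyclic (normalized, $u$-adic) setting by comparing the two explicit cycles $\gamma_P$ and $\eta_\pi$ term by term in the $u$-expansion. First I would recall that by Lemma~\ref{lem: PN} (applied with the two-object subcategory $\{P,N\}$ and the left-inverse semifunctor $F$), every element $\ka_0[\ka_1|\cdots|\ka_n]$ of $(\Hoch\{P,N\},b)$ is homologous to $F(\ka_0)[F(\ka_1)|\cdots|F(\ka_n)]$, and $F_*$ is a genuine cochain map that is a left inverse of the quasi-isomorphism $inc_*$. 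The key observation is that $F$ sends $\pi = g\circ f$ to $g\circ(g\circ f)\circ f = 1_N$ (using $f\circ g = 1_P$) and sends $1_N$ to $1_N$; hence $F_*$ carries the cycle $\eta_\pi \in (\overline{C}\{N\}[[u]], b+uB)$ to the cycle $\gamma_N \in (\overline{C}\{N\}[[u]], b+uB)$ (the analogue of $\gamma_P$ for the object $N$). Since $F_*$ is a left inverse of $inc_*$, $\eta_\pi$ is homologous in $(\overline{C}\{P,N\}[[u]], b+uB)$ to $inc_*(\gamma_N)$, i.e.\ to $\gamma_N$ viewed inside the larger complex.

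Next I would observe that the same reasoning, applied to the object $P$ instead of $N$ but now inside $\{P,N\}$, shows $\gamma_P$ and $\gamma_N$ are both homologous to the canonical negative-cyclic Chern character class $\Ch_{HN}$ of the direct-summand pair: more precisely, one uses that $g_*, f_*$ (or the semifunctor machinery of Remark~\ref{rmk: a, ta}) induce mutually inverse quasi-isomorphisms on the relevant negative cyclic complexes, under which the distinguished cycles $\gamma_P$ for $P$ and $\gamma_N$ for $N$ correspond. Concretely, $\gamma_P$ projects to $1_P$ in $(\overline{\Hoch}\{P\}[[u]], b+uB)$ and $\eta_\pi$ projects to $\pi$ in $(\overline{C}\{N\}[[u]], b+uB)$; under the Morita-type identifications these two classes agree with $\Ch_{HN}(P)$, so the cycles must be homologous in the ambient complex $(\overline{C}\{P,N\}[[u]], b+uB)$. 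Combining with the previous paragraph, $1_P$ and $\eta_\pi$ are both homologous to $\gamma_P$ (the former because $\gamma_P$ projects to $1_P$ and the normalization quotient \eqref{eqn: quot I} is a quasi-isomorphism preserving classes, after noting $1_P$ and $\gamma_P$ differ by terms killed in the normalized complex, hence are homologous already in the un-normalized one; alternatively one checks directly that $\gamma_P - 1_P$ is a boundary), hence to each other.

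The part requiring actual care — the main obstacle — is verifying that $\eta_\pi$ is indeed a cycle in $(\overline{C}\{N\}[[u]], b+uB)$ and that its image under $F_*$ is \emph{exactly} $\gamma_N$ (not merely homologous to it): this is the place where the specific binomial coefficients $\frac{(2i)!}{2(i!)}$ and the sign $(-1)^i$ in \eqref{eqn: eta pi} are forced, and where one must track that $F(\pi)=1_N$ collapses the string $[\pi|\cdots|\pi]$ of length $2i$ to $[1_N|\cdots|1_N]$, which is zero in the \emph{normalized} complex unless compensated by the coefficient $(2\pi-1_N)$ in the leading slot mapping to $(2\cdot 1_N - 1_N) = 1_N$. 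I would carry this out as the lemma-level computation: expand $(b+uB)\eta_\pi$ using the formulas for $b_2$, $b_1$, $b_0$ and $B=(1-t^{-1})sN$ from \S\ref{subsec: MHC}, use closedness of $\pi$, the idempotent relation $\pi^2=\pi$, and the relation $g\circ f = \pi$, $f\circ g = 1_P$ to cancel terms in pairs, and read off that the stated coefficients make all contributions vanish. Once that computation and the identity $F_*(\eta_\pi)=\gamma_N$ are in hand, the homology statement follows formally from the left-inverse property of $F_*$ together with the quasi-isomorphism \eqref{eqn: quot I} and Lemma~\ref{lem: Mix to Neg}, exactly as in \cite[Proposition~4.4]{BW} in the affine case.
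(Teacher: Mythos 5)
Your reduction to Lemma~\ref{lem: PN} does not work, and the failure is at the exact point you call the ``key observation.'' The semifunctor $F\colon \{P,N\}\to\{N\}$ is defined to be the \emph{identity} on $\End(N)$; since $\pi=g\circ f$ is an endomorphism of $N$, one has $F(\pi)=\pi$, not $1_N$ (the expression $g\circ(g\circ f)\circ f$ is not even composable, as $g$ has source $P$). It is $F(1_P)$ that equals $g\circ 1_P\circ f=\pi$. Consequently $F_*$ fixes $\eta_\pi$ rather than carrying it to $\gamma_N$, and your first paragraph collapses. The second paragraph contains an independent error: $\gamma_P$ and $\gamma_N$ represent $\Ch_{HN}(P)$ and $\Ch_{HN}(N)$ respectively, and these are different classes in general (take $N=P\oplus P$), so they are not both ``the canonical class of the direct-summand pair'' and are not homologous in $(\overline{C}\{P,N\}[[u]],b+uB)$.

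More fundamentally, Lemma~\ref{lem: PN} cannot be promoted to the $(b+uB)$-complex by this mechanism at all: $F$ is only a semifunctor, it does not preserve identities, so $F_*$ fails to commute with Connes' operator $B=(1-t^{-1})sN$ (the map $s$ inserts $1_{x_0}$, which $F_*$ turns into $F(1_{x_0})\neq 1_{Fx_0}$). Hence $F_*$ is a cochain map for $b$ but not for $b+uB$, and ``homologous via $F_*$'' gives no information about negative cyclic classes. This obstruction is precisely why the coefficients $(-1)^i\tfrac{(2i)!}{2(i!)}$ and the correction term $(2\pi-1_N)$ in \eqref{eqn: eta pi} are forced, and why the paper's proof in Appendix~\ref{sec: pf 1} proceeds quite differently: it works in the universal category $\cM$ generated by the retract data, quotients by two explicit acyclic subcomplexes to simplify the equation to be solved, and then exhibits explicit chains $\xi_i$ with $\xi_1=g[f]$ satisfying $b(\xi_{i+1})=\eta_i-B(\xi_i)$, so that $\sum_i\xi_i u^{i-1}$ is an explicit primitive of $\eta_\pi-1_P$. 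To complete your argument you would need to produce such a homotopy (or a genuine morphism of mixed complexes realizing the retraction), and that construction is the actual content of the proposition.
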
 

Proposition~\ref{prop: eta} might be known to the experts. Due to lack of a suitable reference, we will give a proof in  Appendix~\ref{sec: pf 1}. 
Another proof will be given in \cite{CKK2}.

\begin{Rmk}
A combination of Proposition~\ref{prop: eta} and \cite[Theorem~5.7]{BW} immediately 
provides an answer to a question in \cite[Remark~5.23] {BW}.
\end{Rmk}

\section{Global matrix factorizations}\label{sec: gmf}
Let $X$ be a smooth separated scheme of finite type over $k$.
In this section we introduce the notion of global matrix factorizations as certain cdg modules over the sheaf $(\cO _X, 0, -w)$ of cdg algebras.
And we define the \v{C}ech model for a dg enhancement of the derived category of global matrix factorizations. 
We  introduce the sheafifications of mixed Hochschild  complexes that are defined in \S~\ref{sec: hth}. All of these will be used in \S~\ref{sec: c ecm}.

\subsection{Matrix factorizations and  injective models}
We may  sheafify the notion of cdg algebras and (quasi-)cdg modules.
For example, we have \begin{equation}\label{eqn: sA} \sA_w:= (\cO _X, 0 , -w) \end{equation} a sheaf of cdg $k$-algebras: $U\mapsto (\cO _X(U), 0, -w|_U)$ for open subsets $U$ of $X$.
A {\em global matrix factorization}, or simply {\em matrix factorization} for $(X, w)$ amounts to a cdg $\sA_w$-module $(P, \delta _P)$
such that $P$ is a locally free and coherent as an $\cO_X$-module. 
There is a notion of the derived category of matrix factorizations for $(X, w)$; see \cite{Orlov: nonaffine}. It is equivalent to
the homotopy category of $\MFdg (X, w)$ if we take $\MFdg (X, w)$ to be the dg category of
matrix factorizations whose  
 objects are matrix factorizations and whose Hom from $P$ to $Q$ is defined to be $\Hom_{\cO _X} (I_P, I_Q)$ 
 using quasi-coherent curved injective replacements $I_P$, $I_Q$ of  $P$, $Q$, respectively.

\subsection{\v{C}ech models}\label{sub: Cech model}
We will consider another model using \v{C}ech resolutions of matrix factorizations instead of
injective resolutions. 
We fix a finite open affine covering $\fU = \{ U _i \}_{i \in I}$ of $X$. Fix a total ordering of $I$. For an $\cO_X$-sheaf $\cF$, 
 let $ \TC (\cF)  $ the sheafified version of the (ordered) \v{C}ech complex of $\cF$ with respect to the covering $\fU$:
  \begin{equation}\label{def: sheaf cech} \TC ^p (\cF) : = \prod _{i_0 < ... < i_p } f_* (\cF |_{U_{i_0, ..., i_p}}) =\cF \ot _{\cO_X}   \prod _{i_0 < ... < i_p} f_* \cO _{U_{i_0, ..., i_p}}  \end{equation}  
  where  $f$ denotes the immersions $U_{i_0, ..., i_p} : = U_{i_0} \cap ... \cap U_{i_p} \to X$. Here the second equality follows from the projection formula.
   The {\em \vC ech differential} $1_\cF \ot \dC $ for 
   \begin{equation}\label{eqn: Cech tensor order}  \vC (\cF) = \cF \ot  \vC (\cO _X) \end{equation} will be written simply  $\dC$ by abuse of notation.
 We follow the Koszul sign rule so that  $\dC (x \ot a ) = (-1)^{|x|} x \ot \dC (a )$ for $x\ot a \in  \cF \ot  \vC (\cO _X) $.

\subsubsection{ Alexander-\v{C}ech-Whitney products}\label{ACW}
The Alexander-\v{C}ech-Whitney product (see \cite{Swan}) is an $\cO _X$-homomorphism 
\[ \cdot :  \vC (\cO _X) \ot _{\cO _X} \vC (\cO _X) \to \vC (\cO _X) \] defined by setting
\begin{align}\label{eqn: ACW O} (a \cdot b )_{i_0, ..., i_{p+q}} 
 & :=   a_{i_0,..., i_p} |_{U_{i_0,...,i_{p+q}} } b_{i_{p}, ..., i_{p+q}} |_{U_{i_0,...,i_{p+q}}}  \end{align} 
 for  $a \in  \vC^p ( \cO_X ) , b \in \vC^q ( \cO_X)$.
 
 If $\cF$ is a sheaf of graded $\cO _X$-modules, we have the {\em right} $\CO$-module structure on the \v{C}ech resolution 
 $ \vC (\cF) =  \cF \ot \vC (\cO _X) $.
If $\cF$ is a sheaf of graded $\cO_X$-algebras, we define the product structure  on 
 $ \vC (\cF) =  \cF \ot \vC (\cO _X) $ 
by  the formula 
\begin{align}\label{eqn: ACW}  (x\ot a) \cdot  (y\ot b) := (-1)^{p | y |} x y \ot a \cdot b \end{align}  
 for $x,  y \in \cF$  $a \in  \vC^p ( \cO_X ) , b \in \vC^q ( \cO_X)$, and the degree $|y|$ of $y$.

 Note that $\dC  (\alpha\cdot \beta) = (\dC \alpha ) \cdot \beta+ (-1)^{|\alpha |} \alpha\cdot (\dC \beta)$
 for $\alpha, \beta\in \vC (\cF)$.
 Therefore \begin{equation}\label{eqn: sAt} \sAtw :=(\TC (\cO _X), \dC, -w) \end{equation} is a sheaf of cdg $k$-algebras on $X$ as $[-w, ] = \dC ^2$.
 
 \subsubsection{The category $\MFtc (X, w)$}\label{def: MFtc}
For each $P \in \MFdg (X, w)$, consider $\TC (P)$. It comes with
the total curved differential $\delta _P \ot 1 + 1 \ot \dC$ by the identification  $\TC (P) = P \ot_{\cO_X} \TC (\cO_X) $. Abusing notation we simply write 
 \[ \delta _P  + \dC\  \text{ for } \ \delta _P \ot 1 + 1 \ot \dC, \]  but we need to remember the Koszul sign rule when we apply $\dC$. 
 Hence $\TC (P)$ can be regarded as a certain cdg-module over $\sAtw$.
 In the dg category $\Mod _{cdg} (\sAtw) $  of right  cdg-modules over $\sAtw$, let us 
take all the objects of form $(\TC (P), \delta _P + \dC)$, for some $P \in \MFdg (X, w)$. This yields  the full subcategory \[ \MFtc (X, w), \]
which is easily seen to be quasi-equivalent to $\MFdg (X, w)$. We call $\MFtc (X, w)$ the {\em \vC ech model} for $\MFdg (X, w)$ with respect to $\fU$.
We may regard the objects of $\MFdg (X, w)$  as the objects of    $\MFtc (X, w)$.

\subsubsection{The category $q\MFtc (X, w)$}
We will also consider a full subcategory $q\MFtc (X, w)$  of $q\Mod _{cdg} (\sAtw) $  of right  quasi-cdg-modules over $\sAtw$,
consisting of $(\TC (P), \delta _P + \dC)$ such that $(P, \delta _P)$ is a locally free coherent right quasi-cdg module over $\sA _w$.
Here a locally free coherent right quasi-cdg module $(P, \delta _P)$ means that $P$ is a $\GG$-graded locally free coherent $\cO_X$-module and 
$\delta _P$ is an $\cO_X$-linear, degree $1$ map. It is not required that $\delta _P^2 =  -\rho_{-w}$. The curvature element of $(P, \delta _P)$ is
defined to be $\delta _P^2 + \rho _{-w}$.

\subsection{Sheafification}

\subsubsection{} Let $\sP$ be a presheaf of cdg categories on $X$:
\[U \mapsto \sP (U) \in \mathrm{cdg}_{k} \] for each open subset $U$ of $X$.
For example,
$\sA _w$ and $\sAtw$ are sheaves of cdg $k$-algebras on $X$, that is
$U \mapsto (\cO (U), 0, -w|_U)$ and  $U \mapsto (\TC (\cO _X) (U), \dC, - w|_U)$, respectively.
Also, we have a presheaf  \[U \mapsto \MFtc (U, w|_U) \] (resp.,  $q\MFtc (U, w|_U)$) of dg (resp., cdg)  categories, which will be denoted by
$\uMFtc (X, w)$ (resp., $q\uMFtc (X, w)$). Here for each $U$ we use the induced covering $\{ U_i \cap U\} _{ i \in I} $ of $U$ to construct 
$\TC (Q)$ for $Q \in (q)D_{dg} (U, w|_U)$.    Note that for $P \in (q)D_{dg}(X,  w)$, $\TC (P)|_U \cong \TC (P|_U)$.

\subsubsection{} We may sheafify the mixed Hochschild complex of $\sP (U)$:
the sheaf $\uHoch (\sP )$ associated to the presheaf
\[ U \mapsto \Hoch (\sP  (U)).  \] 
  Let $\underline{\MC}(\sP )$ denote this complex of sheaves, i.e., $\uHoch (\sP )$ with the induced differential and Connes' operator, denoted by 
$b$, $B$ by abuse of  notation. Similarly we have the complex $\underline{\overline{\MC}} ^{II} (\sP )$
of sheaves associated to $U \mapsto \oHoch^{II} (\sP (U))$.
For example,  a sheafification of  complexes $U \mapsto \Hoch ( \MFtc (U, w|_U)) $
 is denoted  by  $\uHoch(\uMFtc (X, w))$.

\subsubsection{}  Let $G$ denote the Godement resolution functor
so that  for a sheaf $\cF$ of abelian groups, the canonical flasque resolution of $\cF$ is
denoted by $G(\cF)$. Let $G_{\le d}$ denote the Godement resolution functor canonically truncated at the amplitude $d$ with $d:=\dim X$.
Since $H^i (X, \cF) =0$ for all $i > d$ by a vanishing theorem of Grothendieck (\cite[III. Theorem 2.7]{Hart}), $G_{\le d} (\cF)$ is a $\Gamma$-acyclic resolution of $\cF$.
For a sheaf $(\uC, b, B)$ of mixed (unbounded) complexes on $X$ we define a mixed complex
\begin{equation}\label{eqn: RG mix} \RR\Gamma (\uC, b, B) := ( \Gamma (X, G_{\le d} \uC), G_{\le d} b, G_{\le d} B)  \end{equation} following \cite[Definition 3.22]{Ef}.
The functor $\RR\Gamma$ preserves quasi-isomorphisms.

\bigskip

\noindent {\bf Condition $(\star)$}:  $X$ is a smooth separated scheme finite type over $k$ and $w$ has no other critical values but zero.

\bigskip

 \begin{Prop}\label{prop: Efimov results}  {\em (\cite[Proposition 5.1 \& Proposition 3.24]{Ef})} 

 \begin{enumerate}  

 \item\label{item: Efimov results 1}  Assume $(\star)$ and let $C'$ denote either $C, \overline{C} $  $C^{II}$ or $\overline{C}^{II}$. The natural morphism 
 \begin{equation*}\label{eqn: Efimov results} (\Hoch ' (\MFtc (X, w)) ,  b, B) \to \RR\Gamma (\uC ' , b, B) \end{equation*}
is a quasi-isomorphism between the mixed complexes.

\item\label{item: Efimov results 2} For a sheaf $\cF$ of $k$-vector spaces the natural morphism 
\[ G_{\le d} (\cF \ot k[[ u]] ) \to (G_{\le d} \cF ) \ot k[[u]] \] is a quasi-isomorphism between the complexes of $\Gamma$-acyclic sheaves.

\end{enumerate}
\end{Prop}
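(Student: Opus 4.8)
\textbf{Plan of proof for Proposition~\ref{prop: Efimov results}.} The two statements are essentially packaging results from Efimov's paper \cite{Ef}, but it is worth indicating how they fit together. For part~\eqref{item: Efimov results 2}, I would argue as follows. The functor $G_{\le d}$ is built out of the stalkwise Godement construction $\cF \mapsto \prod_{x\in X} (i_x)_* \cF_x$ followed by a canonical truncation at level $d$. Since $k[[u]]$ is flat (indeed free) over $k$, taking stalks commutes with $\ot_k k[[u]]$, and the skyscraper pushforward $(i_x)_*$ commutes with the (infinite) product defining the completed tensor; thus $G(\cF\ot k[[u]])$ and $G(\cF)\ot k[[u]]$ agree in each cohomological degree before truncation. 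The only subtlety is the canonical truncation: $\ot_k k[[u]]$ is exact, so it commutes with $\tau_{\le d}$ up to the natural comparison map, and the resulting map $G_{\le d}(\cF\ot k[[u]]) \to (G_{\le d}\cF)\ot k[[u]]$ is a levelwise isomorphism in degrees $<d$, hence a quasi-isomorphism. The fact that both sides consist of $\Gamma$-acyclic sheaves follows because flasque sheaves stay flasque after $\ot_k k[[u]]$ (a product of flasque sheaves is flasque) and the truncated Godement complex is a bounded complex of flasque, hence $\Gamma$-acyclic, sheaves; here one also invokes Grothendieck's vanishing $H^i(X,-)=0$ for $i>d$ to see that truncation at $d$ still gives a resolution.

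For part~\eqref{item: Efimov results 1}, the key input is the local-to-global comparison for the Hochschild-type invariants of $\MFtc(X,w)$. The plan is to reduce to the affine statements already available in the excerpt. Concretely: the presheaf $U \mapsto \Hoch'(\MFtc(U, w|_U))$ sheafifies to $\uC'$ by definition, and one has a natural map from the global sections of the category-level complex to $\RR\Gamma$ of the sheafified version. To see it is a quasi-isomorphism, I would first reduce the four cases $C,\overline C, C^{II},\overline C^{II}$ to, say, $\overline C^{II}$: the normalization quasi-isomorphisms \eqref{eqn: quot I}, \eqref{eqn: BW 3.9} are natural hence sheafify, and the comparison \eqref{eqn: I to II} between the first- and second-kind complexes holds locally under Condition~$(\star)$ (each $U_i$, and more generally each finite intersection $U_{i_0\cdots i_p}$, is a smooth affine scheme on which $w$ has no critical values but zero), so it sheafifies to a quasi-isomorphism of complexes of sheaves, and $\RR\Gamma$ preserves quasi-isomorphisms. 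Then for $\overline C^{II}$ one uses the spectral sequence of the \v{C}ech/Godement filtration: on each affine piece $U_{i_0\cdots i_p}$, Brown--Walker's affine HKR computation identifies $\overline C^{II}(\MFdg(U_{i_0\cdots i_p}, w))$ with the twisted de Rham complex, so the $E_1$-page of the hypercohomology spectral sequence computing $\RR\Gamma(\uC')$ matches that computing $\Hoch'(\MFtc(X,w))$ via its own \v{C}ech filtration coming from \eqref{eqn: Hom}. This is exactly the content of \cite[Proposition 5.1]{Ef}, and I would cite it for the precise compatibility of the two filtrations.

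The main obstacle is the bookkeeping around the two different ways the \v{C}ech direction enters: on the category side, $\Hoch'(\MFtc(X,w))$ already has a built-in \v{C}ech grading from the Hom-complexes \eqref{eqn: Hom}, while on the sheaf side $\RR\Gamma$ introduces a Godement (or \v{C}ech) resolution of the sheafified Hochschild complex $\uC'$. One must check these two ``resolutions of the same thing'' are compatible — equivalently, that the natural map $\Hoch'(\MFtc(X,w)) \to \Gamma(X, \uC')$ becomes, after replacing the target by $\RR\Gamma$, a map whose cone is acyclic. The cleanest route is to observe that $\Hoch'(\MFtc(U,w|_U))$ for $U$ ranging over the $U_{i_0\cdots i_p}$ are precisely the affine Hochschild complexes, that Efimov's argument shows the presheaf $U\mapsto \Hoch'(\MFtc(U,w|_U))$ already computes $\RR\Gamma$ of its sheafification (its higher \v{C}ech cohomology with respect to $\fU$ vanishes appropriately because $\MFtc$ over an affine has no higher cohomology in the relevant sense), and then quote \cite[Proposition 3.24, Proposition 5.1]{Ef} verbatim. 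I do not expect to need new computations here beyond verifying that Condition~$(\star)$ is inherited by all finite intersections $U_{i_0\cdots i_p}$, which is immediate since they are open affine subschemes of $X$ and $w$ restricted to them still has no critical values but zero.
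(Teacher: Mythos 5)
Your proposal matches the paper's proof, which is a two-line citation: part~(1) is handled by observing that Efimov's Proposition~5.1 goes through verbatim for each of the four variants $C'$, and part~(2) is quoted directly from \cite[Proposition 3.24 \& Lemma 3.25]{Ef}. One caveat on your sketch of part~(2): the claim that $G(\cF\ot k[[u]])$ and $(G\cF)\ot k[[u]]$ ``agree in each cohomological degree before truncation'' is false in general, since $\ot_k k[[u]]$ does not commute with the infinite products over points of $X$ appearing in the Godement construction (there is only a natural comparison map, not a levelwise isomorphism) --- this is exactly why the statement is a quasi-isomorphism rather than an isomorphism and why Efimov's Lemma~3.25 is needed; your fallback of citing \cite{Ef} verbatim is what the paper in fact does.
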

\begin{proof}
(1) The  proof of Proposition 5.1 of Efimov \cite{Ef} works for the various $C'$. (2) This is the contents of \cite[Proposition 3.24 \& Lemma 3.25]{Ef}.
\end{proof}

Combining Proposition \ref{prop: I to II} and  Proposition \ref{prop: Efimov results}~\eqref{item: Efimov results 1}
we may remove the affine condition on $X$ in Proposition \ref{prop: I to II}.
\begin{Cor}
  Under the condition $(\star)$ the natural map 
\begin{equation}\label{eqn: I to II general}  C(\MFtc (X, w) ) \xrightarrow{\sim} C^{II}(\MFtc (X, w) )  \end{equation}
is  a quasi-isomorphism. 
\end{Cor}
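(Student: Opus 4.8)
The plan is to deduce the corollary by a straightforward diagram-chase, bootstrapping from the affine case (Proposition~\ref{prop: I to II}) through the sheafified picture of Proposition~\ref{prop: Efimov results}. First I would note that both sides of~\eqref{eqn: I to II general} compute, via Proposition~\ref{prop: Efimov results}~\eqref{item: Efimov results 1}, the derived global sections $\RR\Gamma$ of the sheafified Hochschild complexes: we have quasi-isomorphisms $C(\MFtc(X,w)) \xrightarrow{\sim} \RR\Gamma(\uHoch(\uMFtc(X,w)), b, B)$ and likewise $C^{II}(\MFtc(X,w)) \xrightarrow{\sim} \RR\Gamma(\uHoch^{II}(\uMFtc(X,w)), b, B)$, and the natural map $C \to C^{II}$ is compatible with these identifications (it is induced levelwise by the presheaf map $\Hoch \to \Hoch^{II}$, hence commutes with Godement resolution and with $\Gamma$). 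So it suffices to show that the sheaf map
\begin{equation*}
 \uHoch(\uMFtc(X,w)) \to \uHoch^{II}(\uMFtc(X,w))
\end{equation*}
is a quasi-isomorphism of complexes of sheaves, since $\RR\Gamma = \Gamma \circ G_{\le d}$ preserves quasi-isomorphisms.

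To check that a map of complexes of sheaves is a quasi-isomorphism it is enough to check it on stalks, or — what is cleaner here, since the relevant presheaves are already defined on the basis of open affines — to check it on a cofinal system of open sets. For an affine open $U \subseteq X$ that is moreover small enough that $w|_U$ still has no critical values but zero (such $U$ form a basis, since the critical locus is closed and $w$ vanishes there), the presheaf value is $\Hoch(\MFtc(U, w|_U))$, which is quasi-isomorphic to $\Hoch(\MFdg(U, w|_U))$ because $\MFtc(U,w|_U)$ is quasi-equivalent to $\MFdg(U,w|_U)$ (see \S\ref{def: MFtc}) and Morita invariance~\eqref{eqn: mor}; similarly on the second-kind side using the pseudo-equivalence invariance~\eqref{eqn: inv pseudo} together with the comparison between $C^{II}(\MFtc)$ and $C^{II}(\MFdg)$. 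Now Proposition~\ref{prop: I to II} gives that $C(\MFdg(U, w|_U)) \to C^{II}(\MFdg(U, w|_U))$ is a quasi-isomorphism for each such $U$. Chasing the resulting commutative square of quasi-isomorphisms shows the presheaf-level map is a quasi-isomorphism on a basis, hence the associated sheaf map is a quasi-isomorphism.

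Finally I would assemble these pieces: the presheaf map $U \mapsto (\Hoch(\MFtc(U,w|_U)) \to \Hoch^{II}(\MFtc(U,w|_U)))$ is a quasi-isomorphism on a basis of opens, so its sheafification $\uHoch(\uMFtc(X,w)) \to \uHoch^{II}(\uMFtc(X,w))$ is a quasi-isomorphism of complexes of sheaves; applying $\RR\Gamma$ and using Proposition~\ref{prop: Efimov results}~\eqref{item: Efimov results 1} on both ends identifies the top and bottom of the square with $C(\MFtc(X,w))$ and $C^{II}(\MFtc(X,w))$ respectively, and the naturality of all the maps involved forces the original map~\eqref{eqn: I to II general} to be the resulting quasi-isomorphism. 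The only genuine subtlety — the step I expect to require the most care — is the compatibility bookkeeping: verifying that the map $C \to C^{II}$, the sheafification, the Godement functor $G_{\le d}$, and the quasi-isomorphisms of Proposition~\ref{prop: Efimov results} all fit into a single commutative diagram, and that the affine local input really applies after shrinking $U$ so that $w|_U$ has no nonzero critical values (which is where the hypothesis $(\star)$, stable under such restriction, is used). Everything else is formal.
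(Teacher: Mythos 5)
Your proposal is correct and follows essentially the same route as the paper, whose proof is exactly the one-line combination of Proposition~\ref{prop: I to II} (the affine case) with Proposition~\ref{prop: Efimov results}~\eqref{item: Efimov results 1} (descent to $\RR\Gamma$ of the sheafified complexes), checked on a basis of affine opens. The only superfluous step is the shrinking of $U$: condition $(\star)$ is automatically inherited by every open subscheme, since the critical values of $w|_U$ form a subset of those of $w$.
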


From now on we will assume the condition $(\star)$ unless otherwise stated.

\subsection{Applications of invariances} \label{sub: cdg invariances}

Note that for each open affine subset $U$ of $X$, we have the following.
\begin{enumerate}

\item\label{item: HKR}  The HKR  map 
\begin{align*} \Ihkr : \overline{\MC}^{II} (\sA_w (U)) & \to   (\Omega ^{\bullet}_{U/k} , d, -dw ) ; \\
 a_0[a_1 | ... | a_n] & \mapsto  
                    \frac{1}{n!} a_0 da_1  ... da_n \end{align*}  is a quasi-isomorphism of mixed complexes; see \cite{CT, Se}. 

\item\label{item: O to CO}  The natural morphism \[\overline{\MC}^{II} (\sA_w (U)) \to \overline{\MC}^{II} (\sAtw (U)) \]
is a quasi-isomorphism by the spectral sequence argument with the filtration of the complex $(\overline{\Hoch}^{II} (\TC (\cO _X) (U)), b)$ by the \vC ech grading.

\item\label{item: qYoneda} The quasi-Yoneda embedding 
\[ \sAtw (U) \to q\MFtc (U, w|_U) \]
is a pseudo-equivalence and hence induces a quasi-isomorphism in the mixed normalized Hochschild  complexes of the second kind by \eqref{eqn: inv pseudo}.
\item\label{item: Yoneda} The natural embedding 
\[ \MFtc (U, w|_U) \to q\MFtc (U, w|_U) \]
is  a pseudo-equivalence and hence induces a quasi-isomorphism in the  mixed normalized Hochschild  complexes  of the second kind  again by \eqref{eqn: inv pseudo}.
\end{enumerate}

Thus we have quasi-isomorphisms between sheaves of mixed complexes
\begin{align}\label{eqn: qYoneda result}   
\underline{\overline{\MC}}^{II} (\uMFtc (X, w))  & \simeq \underline{\overline{\MC}}^{II} (q\uMFtc (X, w))   &  \text{ by \eqref{item: Yoneda} \ } \\
          \nonumber          &  \simeq    \underline{\overline{\MC}}^{II}  (\sAtw) & \text{ by \eqref{item: qYoneda} \ }       \\
                \nonumber    &  \simeq      (\Om _{X/k}, - dw , d  )                                 &     \text{  by  \eqref{item: O to CO} \& \eqref{item: HKR} }    \\
                 \nonumber   &  \simeq      (\vC(\Om _{X/k}), - dw , d )       .                      &          
\end{align}

\section{Connections and the map $\tr _{\nabla}$}\label{sec: c ecm}

\subsection{$V$-Connections}\label{sub: V conn} 
Let $(A, d_A, -h)$ be a curved dg  $k$-algebra.
Let $V$ be a subalgebra of the $k$-algebra $A$ such that 
\[ d_A |_V =0, \quad h\in V, \]  and $V$ is contained in the even degree part of $A$.
Furthermore assume that 
\[ v a = a v \quad \text{  for every } v\in V, \ a\in A . \] 
Note that $d_A ^2 = [-h, ?] = 0$.
Consider a $k$-linear map \[d: A \to \Omega ^1_{V/k} \ot _V  A \]  satisfying the Leibniz rule
\begin{equation}\label{eqn: derivation rule}  d(a_0a_1) = (da_0) a_1 + (-1)^{|a_0|} a_0 da_1 \end{equation}  for $a_0, a_1\in A$.
Here   if $da_i = dv_i \ot a_i' $ with $v_i\in V, a_i' \in A$, we define
\[ (da_0 )a_1 := dv_0 \ot a_0' a_1 \text{ and } a_0 da_1 := (-1)^{|a_0|} dv_1 \ot a_0 a_1' . \]

On $\Omega ^1_{V/k}\ot _V A $ we define a differential (again denoted by) $d_A$ by extension of scalars:
\[ d_A(dv \ot a) = - dv \ot d_A (a) .\]
Assume that  for every $a \in A$
\begin{equation}\label{eqn: dA d} d_A d(a)  +  d d_A (a) = 0 . \end{equation}

\begin{example}
When $V=A$, then $d$ is the usual $k$-derivation of $V$.
\end{example}

\begin{example}\label{example: V-con} From the de Rham differential $\cO _X \to \Omega ^1_{X/k}$
we have \begin{equation}\label{eqn: de Rham Cech}  d: \CO \to \Omega ^1_{X/k}\ot \CO \end{equation} by the projection formula. 
Let $U$ be an affine open subset of $X$ and let
\[ V = \Gamma (U, \cO _X), A = \Gamma (U, \TC (\cO _X)), d_A = \dC, h = w. \]
Then taking $\Gamma (U, - )$ at \eqref{eqn: de Rham Cech} (and slightly abusing notation) we obtain a canonical $k$-derivation  
\begin{equation}\label{eqn: de Rham d}  d:  \Gamma (U, \TC (\cO _X)) \to  \Omega ^1_{U/k}  \ot _{ \Gamma (U, \cO _X)}  \Gamma (U, \TC (\cO _X))  \end{equation} 
 satisfying \eqref{eqn: derivation rule} and \eqref{eqn: dA d}.
\end{example}

\begin{Def}\label{def: V con}
Let $M$ be a graded $A$-module. We call a $k$-linear degree $-1$ map
\[\nabla : M \to  \Omega ^1_{V/k} \ot _V  M  \]  a {\em $V$-connection with respect to} $d$ above if 
 \begin{equation*} \nabla (m a) = \nabla  (m) a + (-1)^{|m|} m \ot da  \end{equation*}
  for every $a \in A$.
Here $m \ot da \in M \ot _A (A \ot _V \Omega ^1_{V/k}) \cong M \ot _V \Omega ^1_{V/k}$. 
\end{Def}

\begin{example} Note that if $\nabla '$ is another $V$-connection on $M$, then $\nabla - \nabla '$ is an $A$-linear map
$M \to  \Omega ^1_{V/k} \ot _V  M  $ as usual.
If $M = A ^{\oplus n}$, then $M$ has the obvious connection $d_F$ induced from $d$.
So a connection $\nabla$ on $A^{\oplus n}$ can be written as $d_F + C$ for some $n\ti n$ matrix $C$ whose entries are elements of $\Omega ^1_V \ot _V  A $.
\end{example}

\subsection{The map $\tr _{\nabla}$}
Consider a perfect right quasi-cdg module $(P, \delta _P)$ over $(V, 0, -h)$. Note that $\delta _P$ is $V$-linear. 
Let $\cP := P\ot _V A$ and note that $\End _A (\cP) = \End _V (P) \ot _V  A$.
Then we have a cdg algebra \[ (\End _A (\cP), [ \delta _{\cP}, ], \delta _{\cP} ^2 + \rho _{-h}) , \]
where $\delta _{\cP}:= \delta _P\ot 1  + 1 \ot d_A$.
Suppose that $\delta _{\cP}^2 = \rho _ {\nu}$ for some $\nu \in V$.
Here $\rho _{\nu}$ denotes the right multiplication by $\nu$.

Assume that a $V$-connection $\nabla$ on $\cP$ is given and let 
\begin{equation}\label{eqn: abst R} R:=u \nabla  ^2 + [\nabla ,  \delta _{\cP}  ] \ \in \Omega _V^{\bullet}  \ot _V \End _A (\cP ) [[u]] . \end{equation}
Here $\nabla ^2$ denotes $(\mathrm{id} \ot \nabla ) \circ \nabla$ followed by the wedge operation:
\[ \cP \xrightarrow{\nabla} \Omega ^1_{V/k} \ot_V \cP  \xrightarrow{\mathrm{id} \ot \nabla}  \Omega ^1_{V/k} \ot_V ( \Omega ^1_{V/k} \ot_V \cP)
 \xrightarrow{\wedge \ot 1}   \Omega ^2_{V/k} \ot_V \cP.\]
The composition $\nabla ^2$ is $A$-linear. 
The $A$-linearity of  $[\nabla ,  \delta _{\cP}  ]$ requires \eqref{eqn: dA d}.
We regard  $ \Omega ^{\bullet}_{V/k} \ot_V \mathrm{End}_A (\cP )  [[u]] $ as a $k[[u]]$-algebra 
defined by 
\[ ( \gamma_1 \ot \ka_1  ) \cdot ( \gamma _2 \ot \ka_2 ) = (-1) ^{|\ka_1||\gamma _2|} (\gamma _1 \wedge \gamma _2) \ot (\ka_1\circ \ka_2)   \]
  for $\gamma _i \in \Omega ^{\bullet}_{V/k}, \  \ka _i \in  \mathrm{End}_A (\cP )$.

Define a $k[[u]]$-linear map $\tr _{\nabla }$ as the composition 
\begin{align}  \label{eqn: def tr nabla}    
   \Hoch  ( \mathrm{End}_A (\cP )  )[[u]] \to   \Omega ^{\bullet}_{V/k} \ot_V \mathrm{End}_A (\cP )   [[u]]    \xrightarrow{\tr}   \Om _{V/k}[[u]]  \ot _V  A ,  \end{align} 
   where the first map is defined by sending $ \ka _0 [ \ka _1 | ... | \ka _n] $ to     
    \begin{equation}\label{formula tr nabla} 
\sum_{(j_0, ..., j_n)} (-1)^{J}   \tr\big( \frac{ \ka _0 R^ { j_0 } [\nabla , \ka _1] R^{j_{1}} [\nabla, \ka _2]  ...  R^{j_{n-1}}  [\nabla  , \ka _n] R^{j_n}  }{(n+J )!} \big), \end{equation}
 with $j_i \in \ZZ_{\ge 0}$ and $J= \sum _{i=0}^n j_i$; and the second map $\tr$ is the $  \Om _{V/k}[[u]] $-linear extension the supertrace
\begin{equation}\label{eqn: def trace} \mathrm{End}_A (\cP)  \underset{\text{canonically}}{\cong}  (\Hom _A (\cP, A)  \ot _A \cP)   \xrightarrow{\mathrm{ev}}  A  . \end{equation}
Here the canonical isomorphism $\cong$ follows from the fact that $\cP$ is a finitely generated projective $A$-module.
Note that in the sum \eqref{formula tr nabla}, the terms for $j_0 + ... + j_n > \dim X$ vanish.

Similarly we may define the version of $\tr_{\nabla}$ for the negative cyclic complex of the second kind as
\begin{equation}\label{eqn: def tr nabla II} \tr _{\nabla}^{II}: \Hoch ^{II} ( \mathrm{End}_A (\cP )  )[[u]]  \to   A \ot _V \Om _{V/k}[[u]]   \end{equation}
by the same formula \eqref{formula tr nabla}.

\begin{Thm} \label{thm: tr nabla general} {\em (Theorem \ref{thm: tr nabla general appendix})} The map $\tr _{\nabla}$ satisfies 
\begin{equation*}
      (ud + d_A  - dh  ) \circ \tr _{\nabla}  =  \tr _{\nabla} \circ (uB + b_2 + b_1 + b_0) .
\end{equation*}
The same cochain map equality also holds  for $\tr _{\nabla}^{II}$.
\end{Thm}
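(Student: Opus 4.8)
The plan is to verify the stated cochain-map identity by expanding both sides in the \v{C}ech/de Rham bidegrees and matching terms that come from the three pieces $b_2$, $b_1$, $b_0$ of the Hochschild differential together with Connes' operator $B$. The key structural fact that makes everything work is the Bianchi-type identity for the curvature: since $R = u\nabla^2 + [\nabla,\delta_{\cP}]$ and $\delta_\cP^2$ is right multiplication by a central element, one has $[\nabla, R] = u[\nabla,\nabla^2] + [\nabla,[\nabla,\delta_\cP]] = 0$ using $[\nabla,\nabla^2]=0$ (the usual Bianchi identity for a connection) and the graded Jacobi identity $[\nabla,[\nabla,\delta_\cP]] = \tfrac12[[\nabla,\nabla],\delta_\cP] = [\nabla^2,\delta_\cP]$, which cancels against the $u$-term after one notes $d_A$ differentiates through. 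More precisely I would first establish the auxiliary identities $[\nabla,R]=0$, $(d_A + ud)(\tr(\xi)) = \tr([\nabla,\xi]) \pm \tr([\delta_\cP,\xi])$-type trace formulas (the ``fundamental theorem of calculus for the supertrace''), and $[\nabla,[\nabla,\ka]] = [R-u\nabla^2, \ka] = [\,[\nabla,\delta_\cP],\ka\,]$ for any $\ka$.

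Second, I would organize the computation as follows. The left side $(ud + d_A - dh)\circ\tr_\nabla$ produces, after applying the trace differentiation formula term by term to the expression in \eqref{formula tr nabla}, a sum in which $d_A$ hits each $\ka_i$ (giving $b_1$-type terms via the Leibniz rule and the fact that $\tr_\nabla$ commutes appropriately with $d_A = \dC$), the $dh$ term pairs with the insertion of $h$ (matching $b_0$, since $R$ ``sees'' the curvature $\delta_\cP^2 + \rho_{-h}$), and the derivative of the $1/(n+J)!$ combinatorial factors together with the telescoping of $[\nabla,\ka_i]$ across adjacent slots reproduces $b_2$ (the compositions $\ka_{i}\ka_{i+1}$) plus the cyclic term. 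The $uB$ term on the right matches the $u\nabla^2$ contributions to $R$ after re-indexing the $(j_0,\dots,j_n)$ sum: inserting an extra $1_\cP$ via Connes' $s$ and $N$ corresponds exactly to raising one of the exponents $j_i$ by one, and the alternating sign $(1-t^{-1})$ is accounted for by the cyclic invariance of the trace together with the sign $(-1)^J$. This is essentially the bookkeeping already carried out by Brown--Walker in the affine case; the point here is that replacing $(A,d_A,-h)$ by $(\Gamma(U,\TC(\cO_X)),\dC,-w)$ and the connection by the $V$-connection of Definition~\ref{def: V con} changes nothing formal, because \eqref{eqn: dA d} is precisely the hypothesis needed for $A$-linearity of $[\nabla,\delta_\cP]$ and hence for all the manipulations to stay inside $\Omega^\bullet_{V/k}\ot_V\End_A(\cP)[[u]]$.

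Third, the identical argument applies verbatim to $\tr_\nabla^{II}$: the only difference between $C$ and $C^{II}$ is a direct sum versus a direct product over tensor degree $n$, and since $\tr_\nabla$ is defined by the same local formula \eqref{formula tr nabla} which is ``finite in each \v{C}ech--de Rham bidegree'' (the terms with $j_0+\cdots+j_n > \dim X$ vanish, and for fixed output bidegree only finitely many $n$ contribute a nonzero \v{C}ech component), the formula converges on the product complex as well and the cochain-map identity is checked degreewise exactly as before. I would remark that no completeness issue arises because $[\nabla,\ka_i]$ raises de Rham degree by at least one on the non-\v{C}ech part and the \v{C}ech degree is bounded by $|I|$.

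\textbf{Main obstacle.} The genuinely delicate step is the precise sign and combinatorial matching between the $B$-differential (via $s$, $N$, $1-t^{-1}$) and the re-indexing of the multi-index sum $\sum_{(j_0,\dots,j_n)}$ that governs the curvature insertions: one must check that differentiating $R^{j_i}$ and redistributing via the Bianchi identity $[\nabla,R]=0$ produces \emph{exactly} the terms $b_2 + uB$ with the correct signs $(-1)^J$ and the correct factorials $1/(n+J)!$ versus $1/(n+1+J)!$. This is where the Koszul sign rule for $\dC$ on $\TC(\cO_X)$ and the sign $(-1)^{p|y|}$ in the Alexander--\v{C}ech--Whitney product \eqref{eqn: ACW} interact with the Hochschild signs, and it is the part I would carry out most carefully (deferring the full computation to the appendix as the statement of Theorem~\ref{thm: tr nabla general appendix} indicates).
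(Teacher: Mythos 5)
Your overall architecture is the right one and essentially matches the paper's: reduce to $\tr_\nabla^{II}$, adapt the Brown--Walker computation, use a supertrace identity of the form $(ud+d_A)\tr(\gamma)=\tr[u\nabla+\delta_\cP,\gamma]$ to turn the left-hand side into a sum of graded commutators, and then match those against $b_2$, $b_1$, $b_0$ and $uB$; your degreewise-finiteness remark for passing to the product-total complex $C^{II}$ is also fine.

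There is, however, a genuine error in what you single out as the key structural fact. The identity $[\nabla,R]=0$ is false: by the graded Jacobi identity $[\nabla,[\nabla,\delta_\cP]]=[\nabla^2,\delta_\cP]$, so $[\nabla,R]=[\nabla^2,\delta_\cP]$, which does not vanish in general, and there is no ``$u$-term'' for it to cancel against since $u[\nabla,\nabla^2]$ is already zero. The correct Bianchi-type statement, and the one the argument actually needs (this is \eqref{eqn: nabla R}), is
\[ [\,u\nabla+\delta_\cP,\ R^{j}\,]\;=\;-\,j\,(d\nu)\,R^{j-1},\qquad \text{where } \delta_\cP^2=\rho_\nu , \]
i.e.\ the covariant derivative of $R$ with respect to $u\nabla+\delta_\cP$ is not zero but equals $-d\nu$. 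This non-vanishing is precisely the mechanism producing the two terms your proposal treats only vaguely: writing $\nu=h+(\nu-h)$, the $-dh$ summand accounts for the $-dh$ on the left-hand side of the theorem, while the $-d(\nu-h)$ summand (the curvature element of $\cP$) accounts for $\tr_\nabla\circ b_0$ on the right. If $[u\nabla+\delta_\cP,R]$ were zero, as your key lemma asserts, neither term would ever appear and the identity would fail whenever $h\neq 0$. So this is not merely a sign to be chased in the ``delicate step'' you defer; the stated lemma must be replaced by the twisted identity above before the term-matching can go through. (A smaller misattribution: $b_2$ does not arise from differentiating the factorials $1/(n+J)!$; it arises from the $[R,\ka_i]$ summand of $[\,u\nabla+\delta_\cP,\ka_i'\,]=[R,\ka_i]-([\delta_\cP,\ka_i])'$, the factorials only being reindexed.)
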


The proof  of  Theorem \ref{thm: tr nabla general} is rather technical and will be given in Appendix \ref{sec: proof of main} closely following the proof of \cite[Theorem 5.19]{BW}.

\subsection{Proof of Theorem~\ref{thm: tr nabla}} \label{sub: compatibility} 
As in \S~\ref{sub: Cech model} we fix a finite open affine covering $\fU = \{ U _i \}_{i \in I}$ of $X$ and fix a total ordering of $I$.

From  Example \ref{example: V-con} for each affine open subset $U$ of $X$ we consider the case:
\[ V = \Gamma (U, \cO _X), A = \Gamma (U, \TC (\cO _X)), d_A = \dC, h = w\]
with the $V$-connection $d$ on $A$ from the de Rham differential of $V$.

 For each $P \in q\MFtc (X, w)$, we choose, once and for all, a $k$-linear sheaf homomorphism
\[ \nabla _P  : \TC (P) \to \TC (P) \ot  _{\cO_X}\Omega ^1_{X/k} \]
such that $\nabla _P (U)$ is an $\cO_X (U)$-connection with respect to $d$.
Such a $\nabla _P$ we call an {\em $\cO_X$-connection} on $\TC (P)$ and will construct in \S~\ref{sub: Th nabla} as $\nabla _E$ in \eqref{eqn: nabla E}  for $E=P$.
 The supertrace map  \eqref{eqn: def trace} becomes
 \begin{equation}\label{eqn: supertrace for P} 
 \tr : \Gamma (U,   \Omega ^{\bullet}_{X/k}  \ot \cE nd _{\cO _X} (P) \ot  \vC (\cO_X) )  \xrightarrow{} \Gamma (U,  \Omega ^{\bullet}_{X/k} \ot  \vC (\cO_X))  . \end{equation}

Since  any finite sum of objects of $q\MFtc (U, w|_U))$ is allowed,
 the definition of $\tr _{\nabla}^{II}$ in \eqref{eqn: def tr nabla II}  can be modified to give the map: 
\[ \overline{\tr} ^{II}_{\nabla, U}: \oHoch^{II} (q\MFtc (U, w|_U)) [[u]] \to    \Om _{X/k} (U)  \ot_{\cO_X (U)}  \TC (\cO_X) (U) [[u]]  . \]
By Theorem~\ref{thm: tr nabla general}, it is a cochain map.  Hence its sheafified version 
\[ \overline{\underline{\tr}}^{II} _{\nabla} : \overline{\underline{\Hoch}}^{II} (q\uMFtc (X, w)) [[u]]  \to \Om _{X/k}  \ot_{\cO_X}  \TC (\cO_X) [[u]]  \] 
 is also  a cochain map between the sheaves of complexes over $k[[u]]$.

Taking Lemma \ref{lem: Mix to Neg} into account,  we see that $\overline{\underline{\tr}}^{II}_{\nabla}$
fits into the following commuting diagram of  cochain maps between the sheaves of complexes over $k[[u]]$:
 \begin{align}\label{diag: tr Hoch} \\ \nonumber 
 \tiny{
\xymatrix{   \underline{\oHoch}^{II} (q\uMFtc (X, w)) [[u]]  \ar@/^1pc/[rdd]_{\ \ \ \ \ \ \overline{\underline{\tr}}^{II}_{\nabla}  }  
       &  \ar[l]_{ \sim}^{\ \ \ \ \ \S \ref{sub: cdg invariances} \eqref{item: qYoneda} \qquad } \ar[dd]_{}^{\overline{\underline{\tr}}^{II} _d}  
\underline{\oHoch}^{II} (\sAtw) [[u]]  
             & \ar[l]_{\qquad \sim}^{\qquad \S \ref{sub: cdg invariances} \eqref{item: O to CO} }  \underline{\oHoch}^{II} (\sA_w) [[u]] 
             \ar[d]_{\underline{I}_{\scriptscriptstyle{\text{HKR}}} }^{\S \ref{sub: cdg invariances} \eqref{item: HKR} } \\
\underline{\oHoch}^{II} (\uMFtc (X, w)) [[u]] \ar[u]^{inc}_{\S \ref{sub: cdg invariances} \eqref{item: Yoneda} }  &  & (\Om _{X/k}  [[u]] ,  -  dw + ud  )  \ar[ld]_{\sim} \\
& (\Om _{X/k} \ot_{\cO_X} \TC(\cO_X) [[u]] , \dC - dw + ud )           &   } }
\end{align}

\begin{Prop}\label{prop: sheaf tr nabla}
The map $\overline{\underline{\tr}}^{II}_{\nabla}$ and the map
 \[     \RR\Gamma ( \overline{\underline{\tr}}^{II}_{\nabla} \circ inc ) : 
  \RR \Gamma  (\underline{\overline{\Hoch}} ^{II} (\MFtc (X, w) ) ) [[u]]  \to           \RR\Gamma (\vC (  \Omega ^{\bullet}_{X/k} ) ) [[u]]  \]
 are quasi-isomorphisms.
\end{Prop}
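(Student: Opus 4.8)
The plan is to prove Proposition~\ref{prop: sheaf tr nabla} by first establishing that $\overline{\underline{\tr}}^{II}_{\nabla}$ is a quasi-isomorphism of sheaves of complexes over $k[[u]]$, and then deducing the statement about $\RR\Gamma$ by standard acyclicity arguments. For the first part, I would argue stalk-wise (or on small affine opens), where the diagram \eqref{diag: tr Hoch} reduces everything to the affine situation. First I would observe that in \eqref{diag: tr Hoch} the maps labelled by \S\ref{sub: cdg invariances}\eqref{item: qYoneda}, \eqref{item: O to CO}, \eqref{item: HKR} are quasi-isomorphisms by the cited results, and the bottom-left map $(\Om_{X/k}[[u]], -dw+ud) \to (\Om_{X/k}\ot_{\cO_X}\TC(\cO_X)[[u]], \dC - dw + ud)$ is a quasi-isomorphism since $\TC(\cO_X)$ is a resolution of $\cO_X$ and tensoring with the $\Gamma$-exact complex $\vC(\cO_X)$ preserves quasi-isomorphisms (this is the usual \v{C}ech-to-derived comparison, combined with Lemma~\ref{lem: Mix to Neg} to pass from the $u=0$ statement to the $u$-adic one after filtering by powers of $u$). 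Therefore, by the commutativity of the diagram, $\overline{\underline{\tr}}^{II}_{\nabla}$ composed with $inc$ agrees up to these quasi-isomorphisms with $\underline{I}_{\scriptscriptstyle{\mathrm{HKR}}}$ precomposed with the chain of quasi-isomorphisms; since $inc$ itself is a quasi-isomorphism (pseudo-equivalence invariance \eqref{eqn: inv pseudo}, \S\ref{sub: cdg invariances}\eqref{item: Yoneda}), a two-out-of-three argument on the commuting square forces $\overline{\underline{\tr}}^{II}_{\nabla}$ to be a quasi-isomorphism.

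Once $\overline{\underline{\tr}}^{II}_{\nabla}$ is known to be a quasi-isomorphism of sheaves of complexes, I would apply the Godement resolution functor $G_{\le d}$ and take global sections. The functor $\RR\Gamma$ as defined in \eqref{eqn: RG mix} preserves quasi-isomorphisms, so $\RR\Gamma(\overline{\underline{\tr}}^{II}_{\nabla})$ is a quasi-isomorphism; the target is identified with $\RR\Gamma(\vC(\Omega^{\bullet}_{X/k}))[[u]]$ after using Proposition~\ref{prop: Efimov results}\eqref{item: Efimov results 2} to commute $G_{\le d}$ past $\ot k[[u]]$. Then I would compose with $\RR\Gamma(inc)$, which is a quasi-isomorphism again by \eqref{eqn: inv pseudo} applied before sheafifying and then by exactness of $\RR\Gamma$; the composite $\RR\Gamma(\overline{\underline{\tr}}^{II}_{\nabla}\circ inc)$ is thus a quasi-isomorphism as a composition of two. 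Finally, to identify the source as claimed I would invoke Proposition~\ref{prop: Efimov results}\eqref{item: Efimov results 1} (with $C' = \overline{C}^{II}$), which says the natural map $\overline{\Hoch}^{II}(\MFtc(X,w)) \to \RR\Gamma(\underline{\overline{\Hoch}}^{II}(\uMFtc(X,w)))$ is a quasi-isomorphism of mixed complexes, and then tensor with $k[[u]]$ using Lemma~\ref{lem: Mix to Neg}.

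The main obstacle I anticipate is bookkeeping the $k[[u]]$-linear structure and the associated completeness/filtration issues: the maps in question are only quasi-isomorphisms after passing to the $u$-adic completion, and each of the cited affine-level statements (HKR, \v{C}ech comparison, pseudo-equivalence invariance) is stated for $u=0$ mixed complexes, so every application of them must be promoted via Lemma~\ref{lem: Mix to Neg} (equivalently, by checking the induced map on associated graded pieces of the $u$-adic filtration and then using a complete filtered comparison argument). One must be careful that the filtration by powers of $u$ on $\overline{\Hoch}^{II}(\MFtc(X,w))[[u]]$ and on $\vC(\Omega^{\bullet}_{X/k})[[u]]$ is exhaustive and complete and that all the maps in \eqref{diag: tr Hoch} respect it; since $\tr_{\nabla}^{II}$ visibly increases $u$-degree by a non-negative amount (the sum in \eqref{formula tr nabla} only adds powers of $u$ from the $R^{j}$ factors), this is straightforward but needs to be stated. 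A secondary, more cosmetic point is the truncation at amplitude $d = \dim X$ in the Godement resolution: one must note that $\Omega^{\bullet}_{X/k}$ is concentrated in degrees $[-d, 0]$ so that $G_{\le d}$ indeed computes $\RR\Gamma$, which is exactly why the expansion of $\exp(-R)$ in Theorem~\ref{thm: ch cech form} terminates at $R^{\dim X}/(\dim X)!$ — but for the present proposition all that is needed is that $G_{\le d}$ is a $\Gamma$-acyclic resolution functor preserving quasi-isomorphisms, which is guaranteed by Grothendieck vanishing as already recorded.
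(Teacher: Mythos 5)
Your argument is correct and follows essentially the same route as the paper: the paper likewise deduces that $\overline{\underline{\tr}}^{II}_{\nabla}$ is a quasi-isomorphism by a two-out-of-three argument in the commuting diagram \eqref{diag: tr Hoch}, using that all the other arrows are quasi-isomorphisms by \S~\ref{sub: cdg invariances} (promoted to the $k[[u]]$-level via Lemma~\ref{lem: Mix to Neg}), and then applies $\RR\Gamma$ together with Proposition~\ref{prop: Efimov results}~\eqref{item: Efimov results 2}. Your additional remarks on the $u$-adic filtration and the truncation $G_{\le d}$ are sound but not needed beyond what the cited results already provide.
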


\begin{proof}
It follows from Proposition \ref{prop: Efimov results} \eqref{item: Efimov results 2} and the fact that in  diagram \eqref{diag: tr Hoch} all the arrows but $\overline{\underline{\tr}}^{II}_{\nabla}$ are shown to be quasi-isomorphisms in \S~\ref{sub: cdg invariances}.
\end{proof}

We  define a $k[[u]]$-linear cochain map
\begin{equation}\label{int: def tr nabla body}   \tr _{\nabla } : \Hoch (\MFtc (X, w) )[[u]] \to \vC (\fU, \Om _{X/k} )  [[u]]  \end{equation}
by composing
\begin{align}\label{eqn: tr nabla II}
 \Hoch (\MFtc (X, w) )[[u]]  & \xrightarrow{}  \overline{\Hoch} ^{II} (\MFtc (X, w) )[[u]]    \\  \nonumber
&  \xrightarrow{natural} \Gamma (X,  \overline{\uHoch} ^{II} (\underline{D}_{\vC} (X, w) )) [[u]]   \\  \nonumber
&  \xrightarrow{\Gamma( \underline{\overline{\tr}}^{II}_{\nabla} \circ inc ) } \Gamma (  X, \vC (\Om _{X/k} ))  [[u]].
\end{align}

\begin{Thm} \label{thm: tr nabla body} {\em (Theorem \ref{thm: tr nabla})}
The $k[[u]]$-linear map
\begin{multline}\label{eqn: thm tr nabla body}
  \tr _{\nabla} :  \ \   (\Hoch (\MFtc (X, w) )[[u]], b + uB  )   \\ 
\to   (\vC (\fU, \Omega ^{\bullet}_{X/k}  ) [[u]] ,   \dC - dw + ud )
 \end{multline}
is a quasi-isomorphism  compatible with the HKR-type isomorphism.
\end{Thm}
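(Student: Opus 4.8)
\textbf{Proof plan for Theorem~\ref{thm: tr nabla body}.}
The plan is to verify the two claims separately: first that the map is a quasi-isomorphism, then that it is compatible with the HKR-type isomorphism. For the quasi-isomorphism claim, I would factor $\tr_{\nabla}$ exactly as in \eqref{eqn: tr nabla II}, as a composite of three maps, and check that each is a quasi-isomorphism. The first map $\Hoch(\MFtc(X,w))[[u]] \to \overline{\Hoch}^{II}(\MFtc(X,w))[[u]]$ is the composite of the normalization quotient \eqref{eqn: quot I}, which is a quasi-isomorphism of mixed complexes (hence stays one after $-\ot_k k[[u]]$ by Lemma~\ref{lem: Mix to Neg}), with the comparison map $\overline{C}(\MFtc(X,w)) \to \overline{C}^{II}(\MFtc(X,w))$, which is a quasi-isomorphism by \eqref{eqn: I to II general}. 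The second map is the natural map from global sections of a complex of sheaves to its $\RR\Gamma$; here one must pass through $\RR\Gamma$, using Proposition~\ref{prop: Efimov results}\eqref{item: Efimov results 1} to identify $\overline{\Hoch}^{II}(\MFtc(X,w))$ with $\RR\Gamma(\underline{\overline{C}}^{II}(\uMFtc(X,w)))$, and using Proposition~\ref{prop: Efimov results}\eqref{item: Efimov results 2} to commute $\RR\Gamma$ with $-\ot_k k[[u]]$. The third map is $\RR\Gamma(\overline{\underline{\tr}}^{II}_{\nabla}\circ inc)$, which is a quasi-isomorphism by Proposition~\ref{prop: sheaf tr nabla}. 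Stringing these together, and observing that the differential on the target was computed to be $\dC - dw + ud$ in \eqref{eqn: qYoneda result}, yields the quasi-isomorphism.

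\textbf{Cochain map property.}
Before the above makes sense I must know $\tr_{\nabla}$ is a cochain map, i.e. that it intertwines $b+uB$ with $\dC - dw + ud$. The first map in \eqref{eqn: tr nabla II} is a morphism of mixed complexes (normalization and the natural comparison both are), the second is a morphism of complexes over $k[[u]]$ by functoriality of $\RR\Gamma$, and the third is $\RR\Gamma$ applied to $\overline{\underline{\tr}}^{II}_{\nabla}\circ inc$, which is a cochain map because $\overline{\underline{\tr}}^{II}_{\nabla}$ is (being the sheafification of the maps $\overline{\tr}^{II}_{\nabla, U}$, each a cochain map by Theorem~\ref{thm: tr nabla general}) and $inc$ is. So the composite is a cochain map; one should only be careful that the target differential genuinely becomes $\dC - dw + ud$, which is the content of the lower-right triangle of diagram~\eqref{diag: tr Hoch} (the map to $(\Om_{X/k}\ot_{\cO_X}\TC(\cO_X)[[u]], \dC - dw + ud)$ from $(\Om_{X/k}[[u]], -dw + ud)$ is a quasi-isomorphism of complexes).

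\textbf{Compatibility with the HKR-type isomorphism.}
This is the step I expect to be the real obstacle, since it is a matter of matching the map constructed here with the abstract isomorphism on homology produced by chasing diagram~\eqref{eqn: qYoneda result}. The plan is to enlarge diagram~\eqref{diag: tr Hoch}: the top row and right column already exhibit the HKR-type isomorphism $\overline{HN}^{II}(\uMFtc(X,w)) \cong \HH^{*}(X,(\Om_{X/k}[[u]], -dw+ud))$ via $\underline{I}_{\mathrm{HKR}}$ and the pseudo-equivalence/spectral-sequence quasi-isomorphisms of \S~\ref{sub: cdg invariances}. I would show that diagram~\eqref{diag: tr Hoch} commutes, where the nontrivial commuting cell is the square relating $\overline{\underline{\tr}}^{II}_{\nabla}$ on $q\uMFtc(X,w)$, $\overline{\underline{\tr}}^{II}_{d}$ on $\sAtw$, the pseudo-equivalence $\sAtw \to q\MFtc(X,w)$ of \S~\ref{sub: cdg invariances}\eqref{item: qYoneda}, and the triangle relating $\overline{\underline{\tr}}^{II}_{d}$ with $\underline{I}_{\mathrm{HKR}}$. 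Commutativity of the latter triangle reduces, after restricting to an affine open $U$ and unwinding \eqref{formula tr nabla} with $\cP = \TC(\cO_X)(U)$, $\nabla = d$, $R = u d^2 + [d,\dC] = [d,\dC]$ (since $d^2=0$ on functions), to a direct chain-level computation: in that case $[\nabla,\ka_i]$ becomes $d\ka_i$ up to the \v Cech contribution, the trace is the identity, and the sum collapses to $\frac{1}{n!}\ka_0\, d\ka_1\cdots d\ka_n$ plus \v Cech-differential terms, which is precisely $\underline{I}_{\mathrm{HKR}}$ composed with the comparison $\sA_w \to \sAtw$. The square with the pseudo-equivalence requires checking that $\tr_{\nabla}$ is compatible with the functoriality map $(F,\ka)_*$ of \S~\ref{subsec: MHC}; this is where the choice of connection $\nabla_P$ on each $\TC(P)$ (made in \S~\ref{sub: Th nabla}) interacts with direct summands, twists, and shifts, and it is the part most likely to demand care — it is handled by the summand argument of \S~\ref{sub: alt} (Proposition~\ref{prop: eta}) and the functoriality of the trace under the semifunctors of Remark~\ref{rmk: a, ta}. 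Once the enlarged diagram commutes, the homology-level statement ``$\tr_{\nabla}$ is the HKR-type isomorphism'' follows by two-out-of-three.
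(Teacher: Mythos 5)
Your plan is essentially the paper's own proof: the paper defines $\tr_{\nabla}$ by the composite \eqref{eqn: tr nabla II} and deduces the theorem from the commuting diagram \eqref{diag: derived tr Hoch}, whose outer circuit is assembled from exactly the ingredients you cite --- \eqref{eqn: quot I}, \eqref{eqn: BW 3.9}, \eqref{eqn: I to II general}, Proposition~\ref{prop: Efimov results}, and Proposition~\ref{prop: sheaf tr nabla} --- together with the commutativity of \eqref{diag: tr Hoch}. Two adjustments. First, you cannot literally ``check that each of the three maps is a quasi-isomorphism'': the middle factor $\overline{\Hoch}^{II}(\MFtc(X,w))[[u]] \to \Gamma(X,\underline{\overline{\Hoch}}^{II}(\uMFtc(X,w)))[[u]]$ is not known to be one on its own, since $\Gamma$ of the sheafification need not compute hypercohomology. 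The correct logic (and the paper's) is to compose the whole map with $\Gamma(\vC(\Omega^{\bullet}_{X/k}))[[u]] \to \RR\Gamma(\vC(\Omega^{\bullet}_{X/k}))[[u]]$ --- a quasi-isomorphism because the ordered \v{C}ech complex on the separated scheme $X$ consists of $\Gamma$-acyclic sheaves --- identify the result with the circuit through $\RR\Gamma$, which is a composite of known quasi-isomorphisms, and conclude by two-out-of-three. Second, your worry about compatibility of $\tr_{\nabla}$ with direct summands, twists and shifts, and the appeal to Proposition~\ref{prop: eta} and Remark~\ref{rmk: a, ta}, is not needed for the commutativity of \eqref{diag: tr Hoch}: the quasi-Yoneda arrow lands on the single object $\TC(\cO_X)$, whose chosen connection is by construction the de Rham differential $d$, so $\overline{\underline{\tr}}^{II}_{\nabla}$ restricted to its image is $\overline{\underline{\tr}}^{II}_{d}$ by definition; your observation that for $\cO_X$ one has $R=0$ (all local connections coincide and $\delta_{\cO_X}=0$, so both products in \eqref{eqn: concrete form} vanish) and hence the formula collapses to $\Ihkr$ is the right check for the remaining triangle.
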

\begin{proof} The proof follows from the following commuting diagram of cochain maps 
 \begin{equation}\label{diag: derived tr Hoch} 
 \xymatrix{ \Hoch (\MFtc (X, w) )[[u]]  \ar[r]^{\sim}_{\eqref{eqn: quot I}}  \ar@/_3pc/[ddr]_{ \tr _{\nabla} } 
 & \overline{\Hoch} (\MFtc (X, w) )[[u]]  \ar[r]^{\sim \quad }_{ \text{Prop. \eqref{prop: Efimov results}}    \quad }  \ar[d]
             & \RR\Gamma ( \underline{\overline{\Hoch}}  (\MFtc (X, w) ))  [[u]]       \ar[d]_{\sim}^{     \eqref{eqn: BW 3.9} \& \eqref{eqn: I to II}  } \\ 
          & \Gamma      ( \underline{\overline{\Hoch}} ^{II} (\MFtc (X, w) ))  [[u]]    \ar[d]_{\Gamma( \overline{\underline{\tr}} _{\nabla}^{II} \circ inc) } \ar[r] 
          &  \RR \Gamma  (\underline{\overline{\Hoch}} ^{II} (\MFtc (X, w) ) ) [[u]]     \ar[d]_{\sim}^{\text{Prop.} \ref{prop: sheaf tr nabla}}  \\    
            & \Gamma (\vC (  \Omega ^{\bullet}_{X/k} )) [[u]] \ar[r]^{\sim} & \RR\Gamma (\vC (  \Omega ^{\bullet}_{X/k} ) ) [[u]] }
 \end{equation}
 where the arrows with $\sim$  are meant to be quasi-isomorphisms. \end{proof}

 For  $ \ka _j \in \vC  (\fU, \cH om_{\cO _X} (P_{j+1} ,  P_{j} ) ) $ with $P_{n+1} =P_0$, we note that  
 $\tr _{\nabla} (\ka _0 [ \ka _1 | ... | \ka _n] ) $ is as in  formula \eqref{eqn: tr formula}
\begin{equation*} \label{eqn: tr formula body}
\sum_{(j_0, ..., j_n)} (-1)^{J}   \tr\big( \frac{ \ka _0 R_1^ { j_0 } [\nabla , \ka _1] R_2^{j_{1}} [\nabla, \ka _2]  ...  R_n^{j_{n-1}}  [\nabla  , \ka _n] R_0^{j_n}  }{(n+J )!} \big) ,
 \end{equation*}
 where $j_i \in \ZZ_{\ge 0}$ and $J= \sum _{i=0}^n j_i$.
From Remark \ref{rmk: nabla2} we see that 
\begin{align}\label{eqn: concrete form}  [\nabla , \ka _j] & =   \prod _ {i_0 < ... < i_{p_j}}    \big( \nabla _{P_j , i_0}|_{U_{i_0, ..., i_{p_j}}} \circ   (\ka_j)_{i_0, ..., i_{p_j}} -
 \\ \nonumber 
  & \ \ \ \ \ \ \ \ \  \ \ \ \ \ \ \ \ \   \ \ \ \ \ \ \ \ \   (-1)^{|\ka _j|  } (\ka_j)_{i_0, ..., i_{p_j}} \circ   \nabla _{P_{j+1} , i_0}|_{U_{i_0, ..., i_{p_j}}} \big)    ;   \\  \nonumber
R_j & =  \prod _{i} \left( u \nabla _{P_j, i} ^2  + 
  [ \nabla _{P _j , i} ,  \delta _{P _j} |_{U_{i}} ]   \right) + \prod _{i_0 < i_1} ( \nabla _{P _j , i_0} -   \nabla _{P _j , i_1} )  .
\end{align}

\subsection{Construction of $\cO_X$-connections on $\TC ( E ) $}\label{sub: Th nabla}

Let $E$ be a vector bundle on $X$.
Choose a connection $\nabla _i $ of $E|_{U_i}$ for $i\in I$:
\[\nabla _i : E|_{U_i} \to  \Omega ^1_{U_i/k}   \ot  _{\cO_{U_i}} E|_{U_i} , \]
which exists, since $U_i$ is affine.
We consider 
\begin{equation}\label{eqn: nabla E} \nabla _E := \prod _{i \in I} \nabla _i \in \vC ^0 (\fU, \cH om _{k} (\vC (E), \Omega ^1_{X/k} \ot \vC (E)  ))  \end{equation}
a  \vC ech element of a {\em $k$-sheaf}  $\cH om _{k} (\vC (E), \Omega ^1_{X/k} \ot \vC (E) )$ on $X$.
Note that $\nabla _E$  satisfies \[ \nabla _E ( s a ) = \nabla _E (s) a + (-1)^{|s|}s\ot d a \]
for $s \in \TC (E)$, $a \in \TC (\cO _X)$. 
Thus for each affine $U$ open subset of $X$,  $\nabla _E (U)$  is  an $\cO_X (U)$-connection of  the $\cO_X (U)$-module $\vC (E) (U)$ with respect to $d$.
We have constructed an $\cO_X$-connection on $\TC (E)$.

\begin{Rmk}\label{rmk: nabla2}
Note that 
\begin{align*}
\nabla _E ^2 & \in   \Hom_{\CO } (\TC (E),\Omega ^2_{X/k} \ot \vC (E) )   =  \Hom_{\cO _X } (E, \Omega ^2_{X/k} \ot \vC (E) ) ; \\
    [\nabla _E , \dC]   & \in   \Hom_{\CO } (\TC (E), \Omega ^1_{X/k} \ot \vC (E))  =  \Hom_{\cO _X } (E, \Omega ^1_{X/k} \ot \vC (E) )     .       \end{align*}
In fact they are elements in   $ \Hom_{\cO _X } (E, \Omega ^{2}_{X/k}  \ot _{\cO _X} \TC ^0 (E)  )$ and $  \Hom_{\cO _X } (E,  \Omega ^{1}_{X/k} \ot _{\cO _X} \TC ^1 (E)   ) $, respectively.
Now it is easy to see that they are expressed as 
\begin{align} \label{eqn: usual curvature} \prod _i  \nabla _i ^2  & \in  \vC ^0 (\fU,  \Omega _{X/k} ^2  \ot _{\cO _X} \cE nd_{\cO _X} (E) ) ; \\ \label{eqn: usual Atiyah}
\prod _{i < j} (\nabla _i - \nabla _j )   & \in    \vC ^1 (\fU, \Omega _{X/k} ^1     \ot _{\cO _X}    \cE nd_{\cO _X} (E) )  . \end{align}
Let $\ka \in \vC ^p ( \fU, \cE nd_{\cO _X} (E)) \subseteq \End_{\CO }  (\vC (E))  $.
Then $[\nabla _E, \ka ]$ is an element in  $ \Hom_{\CO} (\TC (E) ,\Omega ^1 _{X/k} \ot _{\cO _X}  \TC (E)   )$, which is expressed as 
\[ \prod _{i_0 < ... < i_p} [\nabla _{i_0}|_{U_{i_0, ..., i_p}}  , \ka _{i_0 ,... , i_p}  ] \in \vC ^p (\fU,  \Omega _{X/k} ^1 \ot _{\cO _X} \cE nd_{\cO _X} (E)   ) . \]
\end{Rmk}

\begin{Rmk}\label{rmk: some exp} Note that  \eqref{eqn: usual curvature} is  the curvature $\nabla _{E}^2$ of the connection $\nabla _E$ on $\vC (E)$ 
and  \eqref{eqn: usual Atiyah} is   the \vC ech representative of
the Atiyah class of the vector bundle $E$ (see \cite{Huyb} for example).
\end{Rmk}

\section{Applications}\label{sec: app}
In this section we will apply Theorem~\ref{thm: tr nabla} to prove 
 a Chern character formula~\eqref{eqn: ch cech form}.
We also generalize the formula to the localized case as well as the equivariant case of a finite group action.

\subsection{A Chern character formula}
Denote by $\ch_{HN} (P)$ and $\ch _{HH} (P)$ 
 the classes in \[  \HH^{-*} (X, (\Om _{X/k} [[u]], ud - dw )) \text{ and }   \  \HH^{-*} (X, (\Om _{X/k} ,  - dw ))  \] 
 corresponding to  $\Ch _{HN} (P)$ and  $\Ch _{HH} (P)$, respectively via  the isomorphism induced from \eqref{eqn: thm tr nabla body}.

\subsubsection{Proof of Theorem~\ref{thm: ch cech form}} 
Recall that under the natural cochain map there is an isomorphism
\[   \overline{HN}_*  (\MFtc (X, w)) \xrightarrow{\cong} \overline{HN}_*^{II} (\MFtc (X, w)) . \]
Now considering the  class $[1_P]$ of $\overline{HN}_*^{II} (\MFtc (X, w))$ and denoting by $ \overline{\tr} _{\nabla , X }^{II}$  the composition of the last two maps in \eqref{eqn: tr nabla II},
 we get \begin{equation}\label{eqn: ch tr 1}  
\ch _{HN} (P)=  \overline{\tr} _{\nabla , X }^{II} ( 1_P )  \end{equation}
in the cohomology $\check{H}^0 (\fU, (\Omega ^{\bullet}_X   [[u]]  , ud   -  dw  )) .$
For  an $\cO_X$-connection $\nabla _P$ on $\CP$  let
 \begin{align}\label{eqn: total curvature}  R & :=   u\nabla _P^2   + [\nabla _P, \delta _P + \dC ] ,   
\end{align} which we call the total curvature of $\nabla _P$.
Then by  formula \eqref{formula tr nabla} we have
\begin{equation*}\label{eqn: ch form}  \overline{\tr} _{\nabla , X}^{II} ( 1_P )   = \tr  \exp ( - R) .  \end{equation*}
Therefore we have established the following.
\begin{Thm}\label{thm: ch cech form body} {\em (Theorem \ref{thm: ch cech form})}
The Chern character  $\chHN (P)$ is representable by 
\begin{equation}\label{eqn: ch cech form body} \chHN (P) = \tr  \exp  (-R)   . \end{equation}
\end{Thm}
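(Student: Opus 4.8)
The plan is to apply the explicit quasi-isomorphism of Theorem~\ref{thm: tr nabla body} to the tautological cycle $1_P$ and to read off the answer directly from the defining formula~\eqref{formula tr nabla}.

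First I would recall that, by definition, $\Ch_{HN}(P)$ is the class $[1_P]$ and that $1_P$ is \emph{already} a cocycle of the normalized negative cyclic complex $\overline{\Hoch}(\MFtc(X,w))[[u]]$; consequently no homotopy perturbation is needed and it suffices to evaluate the chain-level maps on $1_P$ itself. Under the HKR-type isomorphism furnished by Theorem~\ref{thm: tr nabla body}, the class $\chHN(P)$ is \emph{by definition} the image of $[1_P]$, so it is represented by the image of $1_P$ under the composite \eqref{eqn: tr nabla II}. I would then trace $1_P$ along that composite: the first two arrows carry $1_P$ to the class of $1_P$ in $\Gamma(X, \overline{\uHoch}^{II}(\uMFtc(X,w)))[[u]]$, where the passage from the ordinary to the second-kind complex uses the comparison \eqref{eqn: I to II general} (valid under $(\star)$) and leaves the class unchanged. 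It then remains to apply $\Gamma(\overline{\underline{\tr}}^{II}_\nabla \circ inc)$ to $1_P$. By formula~\eqref{formula tr nabla} in the case $n=0$ with $\ka_0 = 1_P$, the only surviving terms are indexed by a single integer $j \ge 0$ and contribute $\frac{(-1)^j}{j!}\tr(R^j)$, where $R$ is the total curvature~\eqref{eqn: total curvature} of $\nabla_P$; the sum is finite because $\Omega^{>\dim X}_{X/k}=0$. Summing over $j$ gives precisely $\tr\exp(-R)$, which is~\eqref{eqn: ch cech form body}.

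Since the substantive input — that $\tr_\nabla$ is a quasi-isomorphism compatible with the HKR-type isomorphism — is already established in Theorem~\ref{thm: tr nabla body}, the remaining work is essentially bookkeeping: verifying that the class $1_P$ is transported unchanged through the four complexes occurring in \eqref{eqn: tr nabla II} (ordinary versus second kind, un-normalized versus normalized, presheaf level versus derived global sections), and that the combinatorial specialization of \eqref{formula tr nabla} at $n=0$ yields exactly the stated signs and truncation. The step most likely to hide a slip is the identification of the summand $[\nabla_P, \dC]$ inside $R$ with the \v{C}ech/Atiyah contribution $\prod_{i<j}(\nabla_i - \nabla_j)$ appearing in the unwound expression~\eqref{eqn: formula R}; this is the content of Remark~\ref{rmk: nabla2}, and it is what reconciles the intrinsic form~\eqref{eqn: total curvature} of the total curvature with the concrete form advertised in the introduction.
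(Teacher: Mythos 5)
Your proposal is correct and follows essentially the same route as the paper: both transport the tautological cocycle $1_P$ through the composite \eqref{eqn: tr nabla II}, using the normalized/second-kind comparisons to see that the class is unchanged, and then evaluate formula \eqref{formula tr nabla} at $n=0$, $\ka_0=1_P$ to obtain $\sum_j \frac{(-1)^j}{j!}\tr(R^j)=\tr\exp(-R)$, with Remark~\ref{rmk: nabla2} reconciling the total curvature \eqref{eqn: total curvature} with the unwound expression \eqref{eqn: formula R}.
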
 

Remark~\ref{rmk: nabla2} shows  expression \eqref{eqn: formula R}.

\subsubsection{When $k = \CC$}\label{sub: C case}
Assume furthermore that the critical locus of $w$ is proper over $\CC$.
Let $X^{an}$ denote the complex manifold associated to $X$.
Due to GAGA, instead of $\TC (P)$, we may use the Dolbeault resolution  \[ (\sA ^{0, \bullet} _{X^{an}}, \bar{\partial})    \ot _{\cO_{X^{an}}}      P^{an} \] 
to construct the Dolbeault dg enhancement $\Perfd (X, w)$ of the derived category $D (X, w)$. Here $P^{an}$ denotes the holomorphic version of the matrix factorization $P$.
The objects of $\Perfd (X, w)$ are the objects of $D_{dg} (X, w)$. However $\Hom (P, Q)$ 
in $\Perfd (X, w)$ is, by definition, a complex  \[\Gamma (X,    \sA ^{0, \bullet} _{X^{an}}     \ot_{\cO _{X ^{an}}}     \cH om _{\cO _{X^{an}}}(P^{an}, Q^{an})  ). \] 
It is straightforward to check that the Dolbeault version of Theorem~\ref{thm: tr nabla body} holds once we choose a differentiable $(1, 0)$-connection $\nabla$ for each $P \in  \Perfd (X, w)$.
Hence we also have a Chern character formula 
\begin{equation}\label{eqn: ch form an} \chHN (P) = \tr  \exp ( -  u\nabla ^2   - [\nabla, \delta _P + \bar{\partial} ])  \end{equation}
in the cohomology 
\[H^0 (\Gamma (X^{an},   \sA ^{\bullet, \bullet} _{X^{an}}   ) [[u]] , u\partial  + \bar{\partial} -  \partial w\wedge  ) . \]

\subsection{Cohomology with supports}\label{sec: local}
A matrix factorization $P$ for $(X, w)$ is called {\em acyclic} if
 $1_{P}$ is null-homotopic in $\End _{\MFdg (X, w)} (P)$. 
Let $Z$ be a closed subset of $X$. 
We say that  a matrix factorization $P$ for $(X, w)$ is {\em supported} on $Z$ if
$P |_{X-Z}$ is an acyclic matrix factorization for $(X-Z, w|_{X-Z})$.
We may consider a dg category $D_{dg}(X, w)_Z$  of matrix factorizations for $(X, w)$ supported on $Z$. It is a full subcategory of $D_{dg} (X, w)$.

Let $\fU_1 = \{ U_i \} _{i \in I_1} $ be a finite collection of open affine subsets $U_i$ of $X$ such that 
\[\bigcup_{i\in I_1}  U_i \supseteq Z  \]  and 
let $\fU_2 = \{ U_i \} _{i\in I_2} $ be a finite open affine  covering of $X-Z$.
Let $I :=I_1 \coprod I_2$ and $\fU := \{ U_i \} _{i \in I}$, a finite open affine covering of $X$.
As before  $\CP$ and $\vC (P|_{X-Z})$ denote  the ordered \v{C}ech complexes  of $P$ and $P|_{X-Z}$ with respect to $\fU$ and $\fU_2$, respectively.
Let $\CP _Z$ denote the kernel of the natural projection cochain map $\CP \to j_* \vC (P|_{X-Z})$ where $j: X-Z \to X$ is the open inclusion.

With respect to $\fU$, we have the \vC ech model  $\MFtc (X, w)$ for $D_{dg}(X, w)$; see \S~\ref{def: MFtc}.
We define $\MFtc (X, w)_Z$ as the full subcategory of  $\MFtc (X, w)$ consisting all objects supported on $Z$.
We will define another version $\MFtc (X, w)^{rel}_Z$ of $D_{dg}(X, w)_Z$, which is a relative version.
The objects remain the same. However
the Hom space $\Hom (P, Q)$ for $P, Q \in \MFtc (X, w)^{rel}_Z$ is defined to be 
\begin{equation}\label{eqn: rel Hom} \Hom _{\CO _Z }^{\bullet} (\CP _Z, \CQ _Z) \cong \Gamma (X,   \cH om _{\cO _X}^{\bullet} (P,  Q) \ot _{\cO_X}  \CO _Z     ) . \end{equation}
 This is a relative version of \eqref{eqn: Hom}. Note that the natural inclusion \[\Hom _{\CO _Z }^{\bullet} (\CP _Z, \CQ _Z) \to \Hom _{\CO  }^{\bullet} (\CP , \CQ ) \]
 is a quasi-isomorphism. The relative version  $\MFtc (X, w)^{rel}_Z$ is not necessarily unital, which is required in the definition of dg categories at \S \ref{sub: dg k cat}.
It is however cohomologically unital, i.e., it has units in its cohomological category; for the definition see \cite[\S 1,1]{Sei}. 

For a not-necessarily unital dg category $\mathscr{C}$, with no changes the definition of Hochschild complex $(C (\mathscr{C} ), b)$ works.
There is another version, the so-called non-unital Hochschild complex \[ (C^e (\mathscr{C}) , b) \]  which includes $(C (\mathscr{C}) , b)$ as a subcomplex and
the inclusion is a quasi-isomorphism.
The complex  $(C^e (\mathscr{C}) , b)$  however has a suitable Connes differential $B^e$ making a mixed complex $(C^e (\mathscr{C}) , b^e, B^e)$; 
see \cite[\S 3.5]{She} where $C^e(\mathscr{C})$ is denoted by $CC^{nu}_{\bullet} (\mathscr{C})$. 
On the other hand for a unital dg category $\cA$, there is a quasi-isomorphism 
\[ p(u) : (C^e(\cA) [[u]] , b^e + uB^e) \xrightarrow{\sim} (C(\cA) [[u ]], b + uB) \]  defined in  \cite{Shk: NC}.

Therefore we may consider a natural commutative diagram in the category of cochain complexes
 \begin{equation}\label{eqn: Delta local} {\tiny  \xymatrix{  
  \Hoch ^e (\MFtc (X, w)^{rel}_Z )[[u]]  \ar@{^{(}->}[d]_{inc}  \ar@/_4pc/@{-->}[ddd]_{\tr_{\nabla}^Z}  & &  \\
 \Hoch ^e (\MFtc (X, w)_Z )[[u]] \ar[r] \ar[d]_{p(u)} 
& \Hoch ^e (\MFtc (X, w) )[[u]] \ar[r] \ar[d]_{p(u)} & \Hoch ^e (\MFtc (U, w|_{U} ))[[u]]  \ar[d]_{p(u)} \\
 \Hoch (\MFtc (X, w)_Z )[[u]] \ar[r] 
& \Hoch (\MFtc (X, w) )[[u]] \ar[r] \ar[d]_{\tr _{\nabla, X}} & \Hoch (\MFtc (U, w|_{U} ))[[u]]  \ar[d]_{\tr _{\nabla, U}} \\
                  \Gamma (X,  \TC (\Omega ^{\bullet}_{X/k})_Z [[u]] ) \ar@{^{(}->}[r] &   \Gamma (X,  \TC (\Omega ^{\bullet}_{X/k}) [[u]] ) \ar@{->>}[r]  &   \Gamma (U, \TC (\Omega ^{\bullet}_{U/k})[[u]] ) .  }  } \end{equation} 
                                    Here $\tr _{\nabla, X}$ (resp. $\tr _{\nabla, U}$) denotes $\tr _{\nabla}$
                  for $X$ with respect to $\fU$ (resp. for $U$ with respect to $\fU _2$) and
                  the dotted arrow $\tr _{\nabla}^Z$  denotes $\tr _{\nabla, X}$ restricted to $\Hoch ^e (\MFtc (X, w)^{rel}_Z )[[u]]$, which lands on
                      the subcomplex  $  \Gamma (X,  \TC (\Omega ^{\bullet}_{X/k})_Z [[u]] )$.

\begin{Thm} \label{thm: rel tr nabla} The cochain map
\begin{multline}\label{eqn: rel tr nabla}
  \tr _{\nabla}^Z :  \ \   (\Hoch ^e (\MFtc (X, w)^{rel}_Z )[[u]], b^e + uB^e  )   \\ 
\to   (\Gamma (X,  \Omega ^{\bullet}_{X/k} \ot_{\cO_X} \TC (\cO_X)_Z   ) [[u]] ,   \dC - dw + ud )
 \end{multline}
is a quasi-isomorphism  compatible with the HKR-type isomorphism $\tr _{\nabla, X}$.  Furthermore,
in the relative \v{C}ech cohomology \[H^0 (\Gamma (X, \Omega ^{\bullet}_{X/k} \ot_{\cO_X} \TC (\cO_X)_Z   ) [[u]]  ,   \dC - dw + ud  ), \] 
the localized Chern character $\ch^{Z}_{HN} (P)$ for $P \in \MFtc (X, w)_Z $ with local connections \eqref{eqn: nabla i} is representable by
\begin{equation}\label{eqn: rel ch cech form}  
\tr  \left(\prod _{i\in I_1} 1_{P|_{U_i}} \cdot \exp \left( -  u(\nabla_i ^2)_{i\in I}   -( [\nabla_i, \delta _P])_{i \in I}  - (\nabla _i - \nabla _j )_{i < j,\ i , j \in I} \right)\right) .
\end{equation}
\end{Thm}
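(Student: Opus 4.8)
The plan is to read Theorem~\ref{thm: rel tr nabla} off the commutative diagram~\eqref{eqn: Delta local} by comparing two distinguished triangles. First I would record that the middle vertical composite of~\eqref{eqn: Delta local} is $\tr_{\nabla,X}$, a quasi-isomorphism by Theorem~\ref{thm: tr nabla body}, and that the right vertical composite is $\tr_{\nabla,U}$ for the open subscheme $U=X-Z$ with the covering $\fU_2$. Since $X-Z$ is again smooth separated of finite type over $k$ and the critical values of $w|_{X-Z}$ are among those of $w$, condition $(\star)$ persists, so Theorem~\ref{thm: tr nabla body} applies on $X-Z$ and $\tr_{\nabla,U}$ is a quasi-isomorphism too. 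The maps $inc$ and $p(u)$ at the top of~\eqref{eqn: Delta local} are quasi-isomorphisms --- $inc$ by the cohomological-unit Morita invariance of \cite{She}, the inclusion $\Hom^{\bullet}_{\CO_Z}(\CP_Z,\CQ_Z)\hookrightarrow\Hom^{\bullet}_{\CO}(\CP,\CQ)$ being a quasi-isomorphism, and $p(u)$ by \cite{Shk: NC} --- so along $inc$ and $p(u)$ one has $\Hoch^e(\MFtc(X,w)^{rel}_Z)[[u]]\simeq\Hoch(\MFtc(X,w)_Z)[[u]]$.

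Next I would identify the two rows of~\eqref{eqn: Delta local} as compatible distinguished triangles. For the bottom row, the map $\CO\to j_*\vC(\cO_{X-Z})$ is degreewise surjective: on $p$-cochains it is, up to the restriction along the open immersions $U_i\subseteq X-Z$ for $i\in I_2$ (which is the identity, since $U_{i_0\dots i_p}\subseteq X-Z$ when all indices lie in $I_2$), the projection of $\prod_{i_0<\dots<i_p}$ over tuples in $I$ onto those in $I_2$. Hence $\TC(\cO_X)_Z\to\TC(\cO_X)\to j_*\vC(\cO_{X-Z})$ is a short exact sequence of complexes of sheaves; tensoring over $\cO_X$ with the locally free coherent sheaves $\Omega^q_{X/k}$ and completing in $u$ preserves exactness, and every sheaf in sight is $\Gamma$-acyclic (sections over the affine intersections $U_{i_0\dots i_p}$ carry no higher cohomology, and the kernel of a map of $\Gamma$-acyclic sheaves is $\Gamma$-acyclic), so applying $\RR\Gamma$ gives a distinguished triangle whose first term is $\Gamma(X,\Omega^{\bullet}_{X/k}\ot_{\cO_X}\TC(\cO_X)_Z)[[u]]$. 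For the top row, the sequence $\MFtc(X,w)_Z\hookrightarrow\MFtc(X,w)\to\MFtc(X-Z,w|_{X-Z})$ is, after idempotent completion, a Verdier-exact sequence of matrix factorization categories (localization away from $Z$; see~\cite{Orlov: nonaffine}), so the localization theorem for Hochschild homology turns it into a homotopy fiber sequence $\Hoch(\MFtc(X,w)_Z)\to\Hoch(\MFtc(X,w))\to\Hoch(\MFtc(X-Z,w|_{X-Z}))$; tensoring with $k[[u]]$ (Lemma~\ref{lem: Mix to Neg}) and transporting along $inc$ and $p(u)$ recovers the top row. The squares of~\eqref{eqn: Delta local} then display $(\tr_\nabla^Z,\tr_{\nabla,X},\tr_{\nabla,U})$ as a morphism of distinguished triangles; since two of the three legs are quasi-isomorphisms, so is $\tr_\nabla^Z$. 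Compatibility with the HKR-type isomorphism is immediate, as $\tr_\nabla^Z$ is by construction the corestriction of $\tr_{\nabla,X}$ to the support subcomplex $\Gamma(X,\Omega^{\bullet}_{X/k}\ot_{\cO_X}\TC(\cO_X)_Z)[[u]]$.

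For the localized Chern character formula I would trace the class $\Ch_{HN}(P)$ of $P\in\MFtc(X,w)_Z$ through these identifications. Since $P|_{X-Z}$ is acyclic, its local identities are null-homotopic over $X-Z$: choosing null-homotopies $h_i$ of $1_{P|_{U_i}}$ for $i\in I_2$ exhibits $1_P-\prod_{i\in I_1}1_{P|_{U_i}}=\prod_{i\in I_2}1_{P|_{U_i}}=[\delta_P,(h_i)_{i\in I_2}]$ as a boundary, so under the inverse of $inc$ the class $[1_P]$ is represented by the ``relative identity'' $\prod_{i\in I_1}1_{P|_{U_i}}\in\vC^0(\fU_1,\cE nd_{\cO_X}(P))$, viewed inside $\Hom^{\bullet}_{\CO_Z}(\CP_Z,\CP_Z)$. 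Evaluating $\tr_\nabla^Z$ on this element via formula~\eqref{formula tr nabla} with tensor degree $n=0$ collapses the geometric series to $\tr\big(\prod_{i\in I_1}1_{P|_{U_i}}\cdot\exp(-R)\big)$, where $R$ is the total curvature~\eqref{eqn: formula R}; writing $R$ out through~\eqref{eqn: concrete form} yields exactly~\eqref{eqn: rel ch cech form}, with the leading factor indexed only by $I_1$ while the curvature runs over the connections $\nabla_i$, $i\in I$.

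I expect the top-row localization input to be the main obstacle: one must know that $\MFtc(X,w)_Z\hookrightarrow\MFtc(X,w)\to\MFtc(X-Z,w|_{X-Z})$ is Verdier-exact up to Morita equivalence, so that Hochschild homology sends it to a homotopy fiber sequence, and --- crucially --- that the connecting map of the resulting triangle is carried by $\tr_\nabla$ to that of the bottom short exact sequence, so that the two triangles genuinely assemble into the ladder of~\eqref{eqn: Delta local}. A secondary point is checking that the ``relative identity'' $\prod_{i\in I_1}1_{P|_{U_i}}$, which is not itself a cocycle for the relative differential, agrees on cohomology with the correct relative representative of $\Ch_{HN}(P)$ and that its missing correction terms do not affect the value of $\tr_\nabla^Z$; this, together with the sign and normalization bookkeeping, is routine in the spirit of the affine case but must be carried out.
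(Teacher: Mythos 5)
Your proposal follows essentially the same route as the paper: both arguments read the claim off the ladder \eqref{eqn: Delta local}, using that $inc$ and $p(u)$ are quasi-isomorphisms, that the Hochschild row is a distinguished triangle (the paper cites \cite[\S 5.7]{Keller_Cyclic} for exactly the localization input you flag as the main obstacle), that the bottom row is a short exact sequence, and then two-out-of-three; the Chern character formula is likewise obtained in both by replacing $1_P$ with the homologous relative unit $\prod_{i\in I_1}1_{P|_{U_i}}$ and evaluating $\tr_\nabla^Z$ at tensor degree zero. Your extra details (exactness of $\TC(\cO_X)_Z\to\TC(\cO_X)\to j_*\vC(\cO_{X-Z})$, the null-homotopy argument for $\prod_{i\in I_2}1_{P|_{U_i}}$) are correct elaborations of steps the paper treats as immediate.
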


\begin{proof} First we invoke two facts that $inc$ in \eqref{eqn: Delta local} is a quasi-isomorphism; see \cite[Corollary 4.13]{She} and 
the third line of \eqref{eqn: Delta local} is a distinguished triangle; see \cite[\S 5.7]{Keller_Cyclic}.
Note that the last line of \eqref{eqn: Delta local} is obviously a short exact sequence.
By these facts together with the quasi-isomorphisms $p(u)$
it follows that  $ \tr _{\nabla}^Z$ is also a quasi-isomorphism. 
Since the unit $\prod _{i\in I} 1_{P|_{U_i}}$ for $P$ in $\MFtc (X, w)_Z$ is homologous to $1_P^Z :=\prod _{i\in I_1} 1_{P|_{U_i}} $ and, by the definition of $p(u)$,
\[ p(u) \circ inc  (1_P^Z) = 1_P^Z ,\]
we conclude \eqref{eqn: rel ch cech form}.
\end{proof}

\subsection{Global quotients by a finite group}\label{sub: orb}
Let a finite group $G$ act on a smooth quasi-projective variety $X$ from the  left.  Suppose that $w$ is $G$-fixed.
We consider the dg category $ \MFGdg (X, w) $  of $G$-equivariant matrix factorizations for $(X, w)$.
The Hom dg-space  from $P$ to $Q$  is the $G$-fixed part of $\Hom _{\MFtc (X , w)} (P, Q)$.

Denote by $\frt$ the action map and by $\frs$ the projection map
\[ \frt: G \ti X \to X , \ (g, x) \mapsto g  x ; \ \ \ \frs: G\ti X  \to X , \ (g,  x) \mapsto x .\]
 For $E \in \MFtc (X, w)$ denote $(g^{-1})^*E $ by $gE$ and let
 \[\widetilde{E} := \frs_* \frt^* E = \bigoplus _{g \in G}  g E  = E \ot \bigoplus _g g\cO _X .\]  
  We will consider  $\widetilde{E}$ as an object 
 in $\MFGdg (X, w)$ endowed with the natural $G$-equivariant structure.

 Let $\tMF (X, w)$ be the full dg-subcategory of $\MFGdg (X, w)$
consisting of all the objects of form $\widetilde{E}$ for some $E \in \MFtc (X, w)$. 
In fact this gives rise to a functor $ \MFtc (X, w) \to \tMF (X, w)$.
We will see that every object $P$ of $\MFGdg (X, w)$ is a direct summand of $\widetilde{P}$ as follows.

For  $P\in \MFGdg (X, w)$, its $G$-equivariant structure is a compatible isomorphism $\varphi : \frt ^* P \to \frs ^* P$.
Let $\varphi _g$ be the $g$-component of $\varphi$, 
\[  \varphi _g   :  g P   \xrightarrow{\cong}    P.   \] 
Then we get a  map $\iota : P \to  \frs _* \frt^* P $ given by the composition 
\[ \iota: P \xrightarrow{\text{natural}}  \frs _* \frs ^* P  \xrightarrow{\frs _*\varphi ^{-1} } \frs _* \frt^* P .  \] 
Let $\varpi $ be a  map $ \frs _* \frt^* P \to P$  given by the composition 
\[ \varpi :  \frs _* \frt ^* P \xrightarrow{\frs_* \varphi } \frs _* \frs ^* P \xrightarrow{\frac{\text{sum}}{|G|}} P ,  \]
making $\varpi \circ \iota = 1_P$. 
We impose a $G$-equivariant structure  on  $ \frs _* \frt^* P  $ by letting 
\[      \frs _* \frt^* P  = \widetilde{P ^{\sharp}}  = P   \ot \bigoplus _g g\cO _X \]  where
 $P^{\sharp}$ denotes the object in $\MFtc (X, w)$ corresponding to $P$. 
Then it is straightforward to check that $\iota $, $\varpi$ are closed maps in $\MFGdg (X, w)$.
By Morita invariance \eqref{eqn: mor} we have the following lemma.
\begin{lemma}
The inclusion
\[ \MC (\MFGdg (X, w))  \xleftarrow{inc} \MC (\tMF (X, w)) \]  is a quasi-isomorphism.
\end{lemma}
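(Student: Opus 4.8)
The plan is to deduce this directly from the Morita invariance statement recalled in \S\ref{sub: mor}, applied to the inclusion dg functor $inc \colon \tMF(X,w) \to \MFGdg(X,w)$. So I would verify the two hypotheses appearing there: that $inc$ is cohomologically full and faithful, and that $\MFGdg(X,w)$ is split-generated by the image of $inc$. Granting these, \eqref{eqn: mor} immediately yields that $inc_* \colon \MC(\tMF(X,w)) \to \MC(\MFGdg(X,w))$ is a quasi-isomorphism.

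The first hypothesis holds trivially: $\tMF(X,w)$ was defined as a \emph{full} dg-subcategory of $\MFGdg(X,w)$, so for objects $\widetilde{E}, \widetilde{E'}$ the map $inc \colon \Hom(\widetilde{E},\widetilde{E'}) \to \Hom(\widetilde{E},\widetilde{E'})$ is the identity, in particular a quasi-isomorphism. For the second hypothesis I would use the construction carried out just before the lemma: given any $P \in \MFGdg(X,w)$ with corresponding object $P^{\sharp} \in \MFtc(X,w)$, the object $\widetilde{P^{\sharp}} = \frs_*\frt^* P$ equipped with its natural $G$-equivariant structure belongs to $\tMF(X,w)$, and $\iota \colon P \to \frs_*\frt^* P$ and $\varpi \colon \frs_*\frt^* P \to P$ are closed degree-$0$ morphisms of $\MFGdg(X,w)$ satisfying $\varpi \circ \iota = 1_P$. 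Thus every object of $\MFGdg(X,w)$ is an honest direct summand of an object of $\tMF(X,w)$, which is more than enough to conclude that $\MFGdg(X,w)$ is split-generated by the image of $inc$.

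The only step needing actual (and minor) verification is that $\iota$ and $\varpi$ are genuinely $G$-equivariant and closed, i.e.\ honest morphisms in $\MFGdg(X,w)$ --- this is the ``straightforward to check'' assertion preceding the lemma, and amounts to unwinding the definitions of $\iota$, $\varpi$ against the $G$-equivariant structure imposed on $\frs_*\frt^* P$ via the identification with $\widetilde{P^{\sharp}}$ --- together with checking that the split-generation hypothesis of Morita invariance is met by literal direct summands (it is). I do not expect any real obstacle here; once the morphisms $\iota$, $\varpi$ are in hand the entire argument is formal.
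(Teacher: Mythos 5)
Your proposal is correct and is exactly the paper's argument: the paper constructs $\iota$ and $\varpi$ with $\varpi\circ\iota=1_P$ to exhibit every $P$ as a direct summand of $\widetilde{P^{\sharp}}$, and then deduces the lemma in one line from the Morita invariance \eqref{eqn: mor}. No difference in route worth noting.
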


For $E, F \in \MFtc (X, w)$ every map $\phi : \widetilde{F} \to \widetilde{E}$ in $\tMF (X, w)$ is expressed as a square matrix
$(\phi _{g, h})_{g, h \in G}$ whose entries $\phi _{g, h} : gF \to hE$ is a morphism in $\MFtc (X, w)$. Since $\phi$ is $G$-equivariant,
$h ( \phi _{h^{-1} g , \mathrm{id}} ) = \phi _{g, h}$ if we let $h ( \phi _{h^{-1} g , \mathrm{id}} ) := (h^{-1})^* ( \phi _{h^{-1} g , \mathrm{id}} )$.
Hence $\phi$ is uniquely determined by
the components \[\phi _g  := \phi _{g, \mathrm{id}}:  gF \to E , \  g \in G . \] 
We formally write $\phi = \sum _{g\in G}  \phi _g \ot g$.

\begin{example}
For $\ka \in \End ^G_{\cO_X} (P)$, let $\widetilde{\ka} := \iota \circ \ka \circ \varpi$, which is an endomorphism 
of $\widetilde{P^{\sharp}} = \frs _* \frt^* P$ in $\MFGdg (X, w)$. 
Since $\widetilde{\ka}$ is a $G$-equivariant map, it is determined by each component
$gP \to P$, which is 
\begin{equation}\label{eqn: g} \frac{1 }{ |G|}  \ka \circ \varphi _g . \end{equation} 
\end{example}

 For $E_i \in \MFtc (X, w)$ 
the  composition  $(a_0 \ot g_0) \circ (a_1\ot g_1) $ of $a_0\ot g_0 : \widetilde{E_1} \to \widetilde{E_0}$ and $a_1 \ot g_1 : \widetilde{E_2} \to \widetilde{E_1}$ 
is written as $ a_0 \circ g_0(a_1) \ot g_0g_1$, which is illustrated in the diagram
\[   E_0 \xleftarrow{a_0} g_0 E_1  \xleftarrow{ g_0 (a_1) } g_0 g_1 E_2    \]  where 
 $g_0(a_1) := (g_0^{-1})^* a_1$.

Let $X^g$ denote the $g$-fixed loci of $X$ and let $w_g := w |_{X^g}$. Consider a $G$-action on 
the disjoint union $\coprod _{g \in G} X^g$ by
\[ X^g \to X^{hgh^{-1}}, \ x \mapsto h x , \text{ for } h \in G .  \]
It induces an action on the Hochschild complex of $\coprod _{g \in G} X^g$ and its $G$-coinvariants
are denoted by $ (\oplus _{g \in G} \Hoch (\MFtc (X ^g , w_g) ))_G$.
In the category of mixed complexes, following Baranovsky \cite{Bar}, we  define a morphism 
\[  \Hoch(\tMF (X, w)) \xrightarrow{\Psi }    
                            (\oplus _{g \in G} \Hoch (\MFtc (X ^g , w_g) ))_G   \]
                       by sending
             \begin{equation*}  a_0 \ot g_0  [ a_1 \ot g_1  | \cdots  | a_n \ot g_n ]     \mapsto
                a_0{_ {|_{X^g} }}   [  g_0 (a_1)_ {|_{X^g}}  |  g_0g_1 (a_2 )_{|_{X^g}}   | 
              \cdots | g_0 ... g_n (a_{n})_{|_{X^g}}  ]  \end{equation*}
                        with $g := g_0 ... g_n$. 
                        
                        \begin{Prop}
                        The morphism $\Psi$ between the mixed complexes is a quasi-isomorphism.
                        \end{Prop}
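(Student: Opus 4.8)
The plan is to reduce the statement to a comparison at the level of \v{C}ech-sheafified Hochschild complexes together with Baranovsky's theorem \cite{Bar}, rather than proving everything by hand. First I would unwind the source: by the Morita-invariance lemma just proved, $\MC(\tMF(X,w))$ computes the equivariant Hochschild homology, and by the direct-summand argument of \S\ref{sub: alt} (applied in the equivariant setting, using $\varpi\circ\iota = 1_P$) every cycle is controlled by endomorphism complexes $C(\End^G_{\cO_X}P)$ and their $\widetilde{(-)}$-images. The key structural input is the explicit description of morphisms in $\tMF(X,w)$ as formal sums $\sum_{g\in G}\phi_g\ot g$ with the twisted composition rule $(a_0\ot g_0)\circ(a_1\ot g_1) = a_0\circ g_0(a_1)\ot g_0g_1$; this makes $\Psi$ into precisely the ``crossed-product'' collapse map, sending a Hochschild chain to its restriction to the fixed locus $X^g$ for $g=g_0\cdots g_n$. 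I would check first that $\Psi$ is a well-defined cochain map compatible with $b$ (all three pieces $b_0,b_1,b_2$) and with Connes' $B$; the $b_2$-compatibility is where the twisted composition and the restriction-to-$X^g$ bookkeeping must be matched, and the fact that $w$ is $G$-fixed guarantees $w_g = w|_{X^g}$ behaves correctly under the differentials.

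Next I would set up a descending filtration on both sides so as to run a spectral-sequence comparison. On the source, filter $\Hoch(\tMF(X,w))$ by the ``number of nontrivial group elements'' appearing, or equivalently decompose $\widetilde{E}=\bigoplus_g gE$ and grade by the conjugacy-type data $(g_0,\dots,g_n)$; the associated graded splits, up to the $G$-coinvariants, as a direct sum over $g\in G$ of Hochschild complexes built from the categories $\MFtc$ restricted to $gE$. On the target side the filtration is the tautological decomposition $\oplus_{g\in G}$. The map $\Psi$ is filtered, and on the associated graded it becomes, for each fixed $g$, the natural restriction map $\Hoch(\text{matrix factorizations near } X)\to \Hoch(\MFtc(X^g,w_g))$ twisted by the operator $\varphi_g$ coming from the $g$-action. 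The claim then reduces to: this restriction-to-fixed-locus map is a quasi-isomorphism for each $g$.

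For that last point I would invoke Baranovsky \cite{Bar}: in the untwisted case $w=0$, $\GG=\ZZ$, the statement that the Hochschild homology of the $g$-twisted sector of $\mathrm{Perf}(X)$ is computed by $\RR\Gamma(X^g,\Omega^\bullet_{X^g/k})$ is exactly his main theorem, and the argument is purely homological-algebraic (localization, reduction to formal/étale-local models where $X^g\hookrightarrow X$ looks like the inclusion of a linear subspace fixed by a diagonalizable automorphism). The twist by the potential $w$ with $w_g=w|_{X^g}$ is carried along formally: at the level of the curved differential this just adds $-dw$ vs.\ $-dw_g$ and the restriction map intertwines them. So the plan is to cite \cite{Bar} for the $g$-by-$g$ quasi-isomorphism on the associated graded, then conclude by the comparison theorem for filtered complexes (the filtrations being exhaustive and bounded, since $G$ is finite and $X$ is finite-dimensional, so there is no convergence issue).

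The main obstacle I expect is \emph{not} the spectral-sequence formalism but the careful verification that $\Psi$ commutes with $b$ — specifically the $b_2$ term — once all the signs from the Koszul rule, the shift $\mathsf{s}$, and the $G$-action $(g^{-1})^*$ are taken into account; getting the cyclic term $a_na_0[\dots]$ versus its twisted counterpart $g_0\cdots g_n(a_n)a_0[\dots]$ to land correctly in the $G$-coinvariants is the delicate bookkeeping step. A secondary subtlety is making the identification of the associated graded of the source with $\bigl(\oplus_g \Hoch(\MFtc(X^g,w_g))\bigr)_G$ genuinely canonical rather than just after choosing representatives of conjugacy classes; this is handled by working $G$-equivariantly with the full $\coprod_g X^g$ and only passing to coinvariants at the end, exactly as in the statement. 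Once these two points are pinned down, the rest is a formal diagram chase plus the citation of \cite{Bar}.
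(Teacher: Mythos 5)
Your overall architecture --- decompose the source of $\Psi$ by the conjugacy class of $g_0\cdots g_n$, reduce to a per-sector statement, and quote Baranovsky --- has the right shape and matches the paper's proof in the case $w=0$. The genuine gap is your treatment of a nonzero potential. You assert that the twist by $w$ ``is carried along formally'' because restriction intertwines $-dw$ and $-dw_g$; but a quasi-isomorphism is not preserved under an arbitrary perturbation of the differential, and for matrix factorization categories the passage from $w=0$ to $w\neq 0$ is precisely where the work lies. Concretely, the per-$g$ comparison for $\MFtc(X^g,w_g)$ requires (i) working with the Hochschild complex \emph{of the second kind}, where a filtration killing the potential reduces one to the uncurved computation --- this is the spectral sequence argument of \cite[Theorem~6.3]{CT}, and it only converges over an affine base because $\Hoch^{II}$ is a direct product over tensor length, so one must first reduce to $X$ affine by a Mayer--Vietoris argument; and (ii) transporting the result back to the first-kind complex via the comparison quasi-isomorphism \eqref{eqn: I to II}, which is not formal and uses the standing hypothesis $(\star)$ that $X$ is smooth and $w$ has no nonzero critical values. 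Your proposal invokes neither the second-kind complex nor this hypothesis, so the key step for $w\neq 0$ is unproven as written.

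Two smaller points. First, no filtration is needed on the source: the Hochschild differential preserves the conjugacy class of $g_0\cdots g_n$ outright (the cyclic term conjugates it by $g_n$), so the source is a direct sum over conjugacy classes and the ``comparison theorem for filtered complexes'' is superfluous. Second, the sign bookkeeping for Connes' $B$ that you flag as the main obstacle can be bypassed: once $\Psi$ is checked to be a morphism of mixed complexes, Lemma~\ref{lem: Mix to Neg} upgrades a quasi-isomorphism of the underlying $(\,\cdot\,,b)$-complexes to one of the associated negative cyclic complexes, which is all the paper needs.
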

          \begin{proof}              
                        When $w=0$, the map $\Psi$ was defined and shown to be a quasi-isomorphism in \cite{Bar}.
                        For general $w$ we may assume, by the Mayer-Vietoris sequence argument,  that $X$ is affine.
                        In this case, for the Hochschild homology of the second kind, it is  a quasi-isomorphism by a spectral sequence argument as in \cite[Theorem~6.3]{CT}. 
                        Now again by  \eqref{eqn: I to II}, we conclude that it is  a quasi-isomorphism in the Hochschild complex of the  first kind. Taking Lemma
                        \ref{lem: Mix to Neg} into account, we complete the proof.
            \end{proof}         

Fix a finite open affine covering $\fU = \{ U_i \} _{i\in I}$ of $X$ such that each $U_i$ is $G$-invariant. This  induces  a finite open affine covering $\{ X^g \cap U_i  \}_{i\in I}$ 
of $X^g$. 
For each $g$ and each $Q \in \MFtc (X^g, w)$, we choose, once and for all, a connection $\nabla _{Q, i } $ of $Q|_{X^g\cap U_i }$ such 
that the chosen connection $\nabla _{h^*Q, i}$ of $(h^*Q)|_{X^{h^{-1} gh} \cap U_i }$ is isomorphic to  $h^* (\nabla _{Q, i})$. This is possible by  Zorn's lemma. 
Now we have a quasi-isomorphism 
\begin{equation}\label{eqn: tr nabla equiv} 
    \oplus _{g \in G} \Hoch (\MFtc (X ^g , w_g) ) [[u]]  \xrightarrow{\tr _{\nabla}} \oplus _{g \in G}  \Gamma (X^g,  \TC (\cO_{X^g}) \ot \Omega ^{\bullet}_{X^g/k} )[[u]]   
    \end{equation} 
defined by applying $\tr _{\nabla}$ component-wisely for each $g$. The cochain map $\tr _{\nabla}$ in \eqref{eqn: tr nabla equiv}  is $G$-equivariant.
Hence it induces a cochain map in $G$-coinvariants, also denoted  by $\tr_{\nabla}$ by abuse of notation.

\begin{Thm}\label{thm: eq ch}
There is a natural isomorphism
\begin{equation}\label{eqn: equiv HN}  HN_*  (\MFGdg (X, w)) \cong  (\oplus _{g \in G}   H ^{-*} (\Gamma ( X^g,  \vC(\Omega ^{\bullet}_{X^g/k})) [[u]],  \dC - dw_g + ud ))_G . \end{equation}
Under the isomorphism, the $G$-equivariant Chern character becomes
\begin{equation}\label{eqn: equiv ch form} \ch^G_{HN} (P) = \tr _{\nabla} \Psi (\eta _{\pi})  , \end{equation}
where $\pi :=\iota \circ \varpi$ and $\eta _{\pi}$ is from \eqref{eqn: eta pi}.
In particular, 
\begin{equation}\label{eqn: equiv HH ch form} 
\ch ^G_{HH} (P) = \frac{1}{|G|} \bigoplus _{g \in G}  \tr  \left( \varphi _{g |_{X^g}}  \exp ( -  [ \prod _{i\in I} \nabla _{P|_{X^g}, i },  \dC + \delta _{P|_{X^g}}  ]  ) \right) \end{equation}
for $G$-equivariant connections $\nabla _{P|_{U_ i}}$ of $P|_{U_i}$. 
\end{Thm}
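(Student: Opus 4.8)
The plan is to build the isomorphism \eqref{eqn: equiv HN} by concatenating quasi-isomorphisms already at hand, and then to chase the cocycle $1_P$ through the resulting comparison map. Tensoring with $k[[u]]$ and twisting the differential by $uB$, one gets from the diagram of mixed complexes
\[ \MC(\MFGdg(X,w)) \xleftarrow{\ inc\ } \MC(\tMF(X,w)) \xrightarrow{\ \Psi\ } \Big(\bigoplus_{g\in G}\MC(\MFtc(X^g,w_g))\Big)_G \]
a pair of quasi-isomorphisms of negative cyclic complexes, using Lemma~\ref{lem: Mix to Neg}: here $inc$ is the Morita quasi-isomorphism \eqref{eqn: mor} of the lemma just above, and $\Psi$ is the quasi-isomorphism of the preceding Proposition. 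One then post-composes with the componentwise map $\tr_\nabla$ of \eqref{eqn: tr nabla equiv}, which on each $g$-summand is a quasi-isomorphism by Theorem~\ref{thm: tr nabla body} applied to $(X^g,w_g)$; this is legitimate because $(X^g,w_g)$ again satisfies $(\star)$: in characteristic zero $X^g$ is smooth, and as $dw$ is $G$-invariant, at a point $x$ of $X^g$ the covector $dw_x$ lies in $(T_x^*X)^g = T_x^*X^g$, so $dw_{g,x}=0$ forces $dw_x=0$; thus a critical point of $w_g$ is a critical point of $w$, with value $0$. The compatible choice of connections $\nabla_{Q,i}$, arranged via Zorn's lemma, makes $\tr_\nabla$ in \eqref{eqn: tr nabla equiv} $G$-equivariant, so it descends to $G$-coinvariants; since $|G|$ is invertible in $k$ the functor $(-)_G$ is exact, so the descended map still computes homology and remains a quasi-isomorphism. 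Passing to homology yields \eqref{eqn: equiv HN}. (All the maps are compatible with passage to normalized complexes, which is the setting where $\ch^G_{HN}(P)=[1_P]\in\overline{HN}_0\cong HN_0$.)

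Next, to obtain \eqref{eqn: equiv ch form}: the closed degree-$0$ maps $\iota\colon P \to \widetilde{P^\sharp}$ and $\varpi\colon \widetilde{P^\sharp} \to P$ with $\varpi\circ\iota = 1_P$ exhibit $P$ as a direct summand of $N := \widetilde{P^\sharp}$ with idempotent $\pi = \iota\circ\varpi$. By Proposition~\ref{prop: eta}, $1_P$ and $\eta_\pi$ are homologous in $(\overline{C}\{P,N\}[[u]], b+uB)$, hence in $(\overline{C}(\MFGdg(X,w))[[u]], b+uB)$. As $\eta_\pi$ involves only $\pi$ and $1_N$, it is the image under $inc$ of a cycle in $\overline{C}\{N\}[[u]] \subseteq \overline{C}(\tMF(X,w))[[u]]$; so under the isomorphism just constructed, $\ch^G_{HN}(P) = [1_P]$ is carried to $\tr_\nabla\Psi(\eta_\pi)$, which is \eqref{eqn: equiv ch form}.

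For the Hochschild specialization \eqref{eqn: equiv HH ch form}, set $u = 0$ in \eqref{eqn: eta pi}, leaving $\eta_\pi|_{u=0} = \pi = \widetilde{1_P}$, whose component $gP \to P$ equals $\frac{1}{|G|}\varphi_g$ by \eqref{eqn: g}; in the formal notation $\pi = \sum_{g\in G}\frac{1}{|G|}\varphi_g\otimes g$. The defining formula of $\Psi$ in tensor degree $0$ sends this to the class of $\frac{1}{|G|}\bigoplus_{g\in G}\varphi_g|_{X^g}$, and then $\tr_\nabla$ applied componentwise — the $n=0$, $u=0$ instance of \eqref{eqn: tr formula}, namely $\sum_{j\ge 0}(-1)^j\tr(\ka_0 R_0^{j}/j!) = \tr(\ka_0\exp(-R_0))$ with $R_0 = [\prod_{i}\nabla_{P|_{X^g},i},\, \delta_{P|_{X^g}} + \dC]$ by \eqref{eqn: concrete form} at $u=0$ — produces exactly the right-hand side of \eqref{eqn: equiv HH ch form}.

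The main obstacle lies in the inputs rather than in the chase: Proposition~\ref{prop: eta} (proved in the appendix) and the $G$-equivariance of $\tr_\nabla$, i.e.\ the simultaneous compatibility of all the chosen connections $\nabla_{Q,i}$ under the maps $h^*$, together with checking that Theorem~\ref{thm: tr nabla body} genuinely applies to every $(X^g,w_g)$. The rest — tracking normalized versus non-normalized complexes and evaluating $\Psi$ and $\tr_\nabla$ on the explicit $0$-chain $\pi$ — is routine bookkeeping.
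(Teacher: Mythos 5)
Your proposal is correct and follows essentially the same route as the paper: the isomorphism via the zigzag $HN_*(\MFGdg(X,w)) \xleftarrow{inc} HN_*(\tMF(X,w)) \xrightarrow{(\tr_\nabla\circ\Psi)_*} \mathrm{RHS}$, the identification $[1_P]=[\eta_\pi]$ from Proposition~\ref{prop: eta}, and the specialization at $u=0$ via \eqref{eqn: g}. Your added verifications (that $(X^g,w_g)$ satisfies $(\star)$, exactness of $(-)_G$, and the explicit $n=0$ evaluation of $\tr_\nabla$) are correct elaborations of steps the paper leaves implicit.
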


\begin{proof}
The isomorphism \eqref{eqn: equiv HN} follows by isomorphisms
\[ HN_*  (\MFGdg (X, w)) \xleftarrow{} HN_*( \tMF (X, w)) \xrightarrow{(\tr_{\nabla} \circ \Psi)_*}    \text {RHS of \eqref{eqn: equiv HN}} .\] 
The specialization of \eqref{eqn: equiv ch form} at $u=0$ becomes  \eqref{eqn: equiv HH ch form}  by \eqref{eqn: g}.  
Thus it is enough to show \eqref{eqn: equiv ch form}. However this 
is clear from Proposition~\ref{prop: eta} which says that $\eta _{\pi}$ is homologous to $1_P$ in $(\overline{C}  (\MFGdg (X, w))[[u]], b+ uB )$.
\end{proof}

\begin{Rmk}
Since $\ka$ is homologous to $\widetilde{\ka} = \sum _{g \in G}  \frac{1}{|G|} ( \ka \circ \varphi _{g})  \ot g $, we  also get a boundary bulk map formula for Hochschild homology:
\begin{equation}\label{eqn: equiv HH bb} \ka \mapsto \frac{1}{|G|} \bigoplus _{g \in G}  \tr  \left( (\ka \circ \varphi _g )_{|_{X^g}}  
 \exp (  -  [ (\nabla _{P|_{X^g}, i })_{i\in I}  , \dC + \delta _{P|_{X^g}}]) \right) . \end{equation}
\end{Rmk}

\begin{Rmk}
Let $X$ be an open subscheme of $\mathbb{A}^n_{k}$ containing the origin and let $w$ have only one critical point at the origin. 
Under the natural isomorphism $(\bigoplus _{g\in G} \Omega ^{\bullet}_{X^g/k})^G \to (\bigoplus _{g\in G} \Omega ^{\bullet}_{X^g/k} )_G$
and  the completion,
\eqref{eqn: equiv HH bb} coincides with  the corresponding formula~(3.16) in \cite[Theorem 3.3.3]{PV: HRR} up to a  normalization  factor $|G|$ and 
the sign convention in HKR-type isomorphisms.
\end{Rmk}

\begin{Rmk}
When $k=\CC$, using a $G$-equivariant differentiable $(1,0)$-connection on $P$ we get a $G$-equivariant Chern character formula of $\ch ^G_{HH} (P)$ 
 in the Dolbeault cohomology. It is the $G$-equivariant version of \eqref{eqn: ch form an}.
\end{Rmk}

\subsubsection{The equivariant mixed complex} Let $\overline{\Psi} ^{II}$ be the version of $\Psi$ for the mixed normalized Hochschild complex of the second kind.
We have a diagram of quasi-isomorphisms of mixed complexes 
 \begin{equation}\label{diag: equiv Hoch} 
  \xymatrix{\overline{C}^{II} (\MFGdg (X, w)) & \ar[l]_{inc}  \overline{C}^{II} (\tMF (X, w)) \ar[ld]_{\overline{\Psi} ^{II}}   \\
    \ar[r]_{inc}     (\oplus _{g \in G} \oHoch^{II} (\MFtc (X^g, w_g)))_G  &    (\oplus _{g \in G} \oHoch^{II} (q\MFtc (X^g, w_g)))_G   \\
    \ar[ur]_{inc}  (\oplus _{g \in G} \oHoch^{II} (\Gamma (X^g, \vC( \cO _{X^g}))))_G \ar[r]_{\Ihkr} &         (\oplus _{g \in G} (\Gamma (X^g, \vC (\Omega _{X^g/k})) )_G    . }  \end{equation}   
The third inclusion map $inc$ is a quasi-isomorphism, since it is the case for the affine case.

\newcommand{\oMC}{\overline{\MC}}

\begin{Thm}\label{thm: ch orb body}  
The natural morphism $\oMC  (\MFGdg (X, w)) \to \oMC ^{II}  (\MFGdg (X, w))$ is a quasi-isomorphism and   $\oMC ^{II}  (\MFGdg (X, w)))$ is quasi-isomorphic to
\[   \left(\oplus _{g \in G}  (\Gamma ( X^g,  \vC(\Omega ^{\bullet}_{X^g /k })), \dC - dw_g , d ) \right)_G . \]
\end{Thm}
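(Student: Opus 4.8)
The plan is to assemble quasi-isomorphisms that are, for the most part, already established. I would handle the second assertion first, where the diagram \eqref{diag: equiv Hoch} already displays a chain of maps of mixed complexes from $\oMC^{II}(\MFGdg(X,w))$ to $\bigl(\bigoplus_{g}(\Gamma(X^g,\vC(\Omega^\bullet_{X^g/k})),\dC-dw_g,d)\bigr)_G$, and each of its arrows has been accounted for: the two $inc$ arrows, out of $\tMF(X,w)$ and out of $\bigoplus_g q\MFtc(X^g,w_g)$, are induced by pseudo-equivalences ($\tMF(X,w)$ is a full dg subcategory of $\MFGdg(X,w)$ that split-generates it, since every $P$ is a direct summand of $\widetilde{P^\sharp}$; likewise for $\MFtc\hookrightarrow q\MFtc$), hence quasi-isomorphisms by \eqref{eqn: inv pseudo}; $\overline{\Psi}^{II}$ is a quasi-isomorphism by the second-kind form of the assertion that $\Psi$ is one; the inclusion out of $\bigoplus_g\Gamma(X^g,\vC(\cO_{X^g}))$ reduces to the affine case; and the terminal arrow $\Ihkr$ is a quasi-isomorphism by \eqref{item: HKR} together with \eqref{item: O to CO}. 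The point that still needs noting is that each step survives passage to $G$-coinvariants: as $\mathrm{char}\,k=0$ and $G$ is finite, the averaging idempotent $\frac1{|G|}\sum_g g$ realizes $(-)_G$ as a direct summand of the identity functor, so $(-)_G$ is exact, and since each arrow of \eqref{diag: equiv Hoch} is induced from a $G$-equivariant map on the corresponding $\bigoplus_g(\cdot)$, its image under $(-)_G$ is again a quasi-isomorphism. Chaining the resulting quasi-isomorphisms gives the identification
\[ \oMC^{II}(\MFGdg(X,w))\ \simeq\ \Bigl(\bigoplus_{g\in G}\bigl(\Gamma(X^g,\vC(\Omega^\bullet_{X^g/k})),\ \dC-dw_g,\ d\bigr)\Bigr)_G . \]

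For the first assertion I would run the analogue of the upper route of \eqref{diag: equiv Hoch} on the first-kind side. By the Morita equivalence $\MC(\tMF(X,w))\xrightarrow{\sim}\MC(\MFGdg(X,w))$ proved above and its second-kind companion from \eqref{eqn: inv pseudo}, both the first- and second-kind (normalized) mixed complexes of $\MFGdg(X,w)$ are quasi-isomorphic to the corresponding complexes of $\tMF(X,w)$, and these identifications are compatible with the natural comparison map between the two kinds. Next, $\Psi$ (together with its normalized and second-kind variants, related via \eqref{eqn: quot I} and \eqref{eqn: BW 3.9}) identifies the mixed complexes of $\tMF(X,w)$ with $\bigl(\bigoplus_g\oMC(\MFtc(X^g,w_g))\bigr)_G$, respectively $\bigl(\bigoplus_g\oMC^{II}(\MFtc(X^g,w_g))\bigr)_G$, compatibly with the comparison map since both are given by the same formulas on each summand. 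Thus the first assertion reduces to showing that $\bigl(\bigoplus_g\MC(\MFtc(X^g,w_g))\bigr)_G\to\bigl(\bigoplus_g\MC^{II}(\MFtc(X^g,w_g))\bigr)_G$ is a quasi-isomorphism of mixed complexes; by exactness of $(-)_G$ this follows from each $\MC(\MFtc(X^g,w_g))\to\MC^{II}(\MFtc(X^g,w_g))$ being one, which is \eqref{eqn: I to II general} applied to $(X^g,w_g)$ (the natural inclusion $C\hookrightarrow C^{II}$ being a morphism of mixed complexes).

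Two small points round out the argument. To apply \eqref{eqn: I to II general}, and the other statements conditioned on $(\star)$, to the fixed loci, I would first check that $(X^g,w_g)$ again satisfies condition $(\star)$: $X^g$ is smooth (the fixed scheme of a finite cyclic group in characteristic zero), separated, and of finite type; and if $x\in X^g$ is critical for $w_g$, then $dw(x)$ is a $\langle g\rangle$-invariant functional on $T_xX=T_xX^g\oplus N$, with $N$ having no nonzero $\langle g\rangle$-invariants, so it vanishes on $N$ as well as on $T_xX^g$; hence $x$ is critical for $w$ and $w_g(x)=w(x)=0$ by $(\star)$ for $X$. The substantive ingredients — the HKR quasi-isomorphism, the affine comparison \eqref{eqn: I to II}, the pseudo-equivalence invariance \eqref{eqn: inv pseudo}, and the quasi-isomorphy of $\Psi$ — are all in hand, so the remaining work is bookkeeping, and I expect that to be the main obstacle: one must check that the normalization quotients, the inclusions $C\Rightarrow C^{II}$, the maps induced by the pseudo-equivalences, and $\Psi$ together with $\overline{\Psi}^{II}$ are mutually compatible, so that the square bridging the first- and second-kind pictures genuinely commutes, and that every map involved is $G$-equivariant before coinvariants are taken.
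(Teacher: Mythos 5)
Your argument is correct, and for the second assertion it coincides with the paper's: both simply read off the chain of quasi-isomorphisms in diagram \eqref{diag: equiv Hoch} (your explicit remark that $(-)_G$ is exact in characteristic zero, so quasi-isomorphisms survive passage to coinvariants, is a point the paper leaves tacit). For the first assertion you take a genuinely different route. The paper covers $X$ by finitely many $G$-invariant affines (using quasi-projectivity) and runs a Mayer--Vietoris exact-triangle argument to reduce the comparison $\oMC(\MFGdg(X,w))\to\oMC^{II}(\MFGdg(X,w))$ to the affine case, where it follows from \S~\ref{sub: normalized}, \eqref{eqn: I to II}, and the first two arrows of \eqref{diag: equiv Hoch}. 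You instead transport the comparison through two-out-of-three squares built from $inc$ and from $\Psi$ together with their second-kind companions, landing on the map $\bigl(\oplus_{g}C(\MFtc(X^g,w_g))\bigr)_G\to\bigl(\oplus_{g}C^{II}(\MFtc(X^g,w_g))\bigr)_G$, and then invoke the already-globalized Corollary \eqref{eqn: I to II general} on each fixed locus. This avoids redoing Mayer--Vietoris for the equivariant category, at the price of leaning on the quasi-isomorphy of $\Psi$ and $\overline{\Psi}^{II}$ over the non-affine $X$ (which the paper does establish, with the Mayer--Vietoris work buried in that proposition) and on the second-kind arrows of \eqref{diag: equiv Hoch} over the non-affine fixed loci. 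Your explicit verification that $(X^g,w_g)$ again satisfies condition $(\star)$ --- smoothness of the fixed locus in characteristic zero, plus the observation that a critical point of $w_g$ is already critical for $w$ because the $g$-invariant functional $dw(x)$ annihilates the nontrivial isotypic part of $T_xX$ --- is needed on either route (the paper applies $\MFtc(X^g,w_g)$ and $\tr_{\nabla}$ on $X^g$ throughout \S~\ref{sub: orb} without comment) and is a worthwhile addition. In sum, the two proofs use the same ingredients, but yours relocates the globalization step into \eqref{eqn: I to II general} and the $\Psi$-proposition rather than into a separate Mayer--Vietoris argument for $\MFGdg(X,w)$ itself.
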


\begin{proof} Since the second statement follows from  diagram \eqref{diag: equiv Hoch}, it remains to prove the first statement.
When $X$ is affine, its proof follows from  \S \ref{sub: normalized}, \eqref{eqn: I to II}, and the first two quasi-isomorphisms in diagram \eqref{diag: equiv Hoch}.
For the general $X$, note first that there exists  a finite collection of $G$-invariant open affine schemes that cover $X$, since $X$ is quasi-projective. 
We may now apply the Mayer-Vietoris exact triangle argument as in \cite{Bar, Ef} to reduce the proof to 
the affine case.  \end{proof}

\appendix

\section{Proof of Proposition~\ref{prop: eta}}\label{sec: pf 1}

\begin{Prop}\label{prop: eta appendix} {\em (Proposition \ref{prop: eta})} The two cycles $1_P$ and $\eta _{\pi}$ are  homologous in $(\overline{C} \{ P, N \} [[u]], b+ uB)$.
\end{Prop}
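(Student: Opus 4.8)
The plan is to work in the normalized mixed complex $\overline{C}\{P,N\}[[u]]$ and construct an explicit homotopy. The starting point is the known fact \eqref{eqn: 1 pi} that $1_P\sim\pi$ in the Hochschild complex $(C\{P,N\},b)$; I want to upgrade this to a statement about cycles in the negative cyclic complex $(\overline{C}\{P,N\}[[u]],b+uB)$. First I would recall (and verify) that $\gamma_P$ is a cycle in $(\overline{C}\{P\}[[u]],b+uB)$ projecting to $1_P$, and that $\eta_\pi$ is a cycle in $(\overline{C}\{N\}[[u]],b+uB)$; the cycle condition for $\eta_\pi$ is a direct computation using $\pi^2=\pi$, $f\circ g=1_P$, the fact that $\pi$ and $2\pi-1_N$ are closed of degree $0$, and the normalization (so that terms with an interior identity $1_N$ drop out). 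The combinatorial identity making the coefficients $(-1)^i\frac{(2i)!}{2(i!)}$ work is the same one that governs $\gamma_P$, essentially the Taylor expansion of $(1-x)^{-1/2}$ or a Catalan-type recursion; I would isolate this as a lemma about the universal cyclic cycle attached to an idempotent.

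The key structural input is the semifunctor $F:\{P,N\}\to\{N\}$ of \S\ref{sub: semi hom 1}, with $inc:\{N\}\hookrightarrow\{P,N\}$ its left inverse as a semifunctor. On Hochschild complexes, $F_*\circ inc_*=\mathrm{id}$ and $inc_*$ is a quasi-isomorphism (Lemma~\ref{lem: PN}), so $inc_*\circ F_*$ is homotopic to the identity on $(C\{P,N\},b)$. The main step is to promote this to the $(b+uB)$-complex: I would show that $F_*$ (or rather a corrected version of it) is a map of mixed complexes up to homotopy, or more directly, produce an explicit $k[[u]]$-linear homotopy $H$ with $(b+uB)H+H(b+uB)=\mathrm{id}-inc_*F_*$ on $\overline{C}\{P,N\}[[u]]$. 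One clean route: note $\eta_\pi=F_*(\gamma_P')$ for a suitable cycle $\gamma_P'$ on $\{P\}$ (built from $1_P$ together with the $[\,1_P|\cdots|1_P\,]$ tails), i.e.\ $\eta_\pi$ is precisely the pushforward of the canonical negative-cyclic lift of $1_P$ under $P\mapsto N$; then $1_P$ and $\eta_\pi$ both map to the same class, and homology of the cylinder gives them homologous in $\overline{C}\{P,N\}[[u]]$. I would make this last sentence precise by exhibiting the chain in $\overline{C}\{P,N\}[[u]]$ bounding $\eta_\pi-1_P$, namely the image under a mixed-complex homotopy of the chain bounding $\pi-1_P$ in the $u=0$ situation, corrected order by order in $u$.

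The technical heart, and the step I expect to be the main obstacle, is constructing the $u$-corrections of that bounding chain: at $u=0$ a standard explicit homotopy (involving the maps $f,g$) witnesses $\pi\sim 1_P$, but $B$ does not commute with it, so each power of $u$ forces a new correction term, and one must check the resulting series terminates appropriately in each tensor degree (using that in $\overline{C}$ only finitely many configurations of $\pi$'s and $2\pi-1_N$'s survive normalization per degree) and that the coefficients match those in $\eta_\pi$. I would organize this as a filtration/spectral-sequence argument on the $u$-adic filtration of $\overline{C}\{P,N\}[[u]]$: the $E_1$ page is the $b$-homology with the induced $B$, where $\pi\sim 1_P$ is already known, and the claim is that the obstruction classes to lifting the homotopy vanish because $H^*(\overline{C}\{P,N\},b)$ with its Connes operator is the same mixed structure whether computed via $P$ or via $N$ (Morita-type invariance of the inclusion $\{N\}\hookrightarrow\{P,N\}$). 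Convergence of the filtration is automatic since $k[[u]]$ is $u$-adically complete and the complex is degreewise finitely generated over $k[[u]]$ modulo the normalization. Granting Lemma~\ref{lem: Mix to Neg}-style input, this yields that $1_P$ and $\eta_\pi$ are homologous, as claimed.
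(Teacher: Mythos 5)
Your overall skeleton --- reduce to the universal retract situation, filter by powers of $u$, and solve a recursion order by order --- is in the same spirit as the paper's argument, but two of your key steps do not hold up. First, there is no cycle $\gamma_P'$ on $\{P\}$ with $F_*(\gamma_P')=\eta_\pi$: the semifunctor $F$ sends every endomorphism $\ka$ of $P$ to $g\circ\ka\circ f\in\pi\circ\End(N)\circ\pi$, so every tensor factor of anything in the image of $F_*$ restricted to $\overline{C}\{P\}[[u]]$ lies in $\pi\,\End(N)\,\pi$, whereas $\eta_\pi$ has leading factors $2\pi-1_N$, which is not in $\pi\,\End(N)\,\pi$ unless $\pi=1_N$. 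Indeed $F_*(\gamma_P)=\pi+\sum_i(-1)^i\tfrac{(2i)!}{2(i!)}\,2\pi[\pi|\cdots|\pi]u^i$, which differs from $\eta_\pi$ by the terms $(-1)^i\tfrac{(2i)!}{2(i!)}\,1_N[\pi|\cdots|\pi]u^i$; these survive normalization, since only \emph{interior} identities are killed in $\overline{C}$. The $-1_N$ correction is precisely what makes $\eta_\pi$ a $(b+uB)$-cycle, so it cannot be recovered as a pushforward along $F$ and cannot be treated as a bookkeeping discrepancy to be ``checked at the end.''

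Second, the obstruction-vanishing step is asserted rather than proved. That $inc_*$ is a quasi-isomorphism of mixed complexes (hence of negative cyclic complexes, by Lemma~\ref{lem: Mix to Neg}) only says that every class in $\overline{HN}_*\{P,N\}$ admits a representative coming from $\{N\}$; it does not identify \emph{which} such representative is homologous to $1_P$, and that identification is exactly the content of the proposition. Concretely, writing the sought bounding chain as $\sum_i\xi_iu^i$, the recursion to be solved is $b(\xi_{i+1})=\eta_i-B(\xi_i)$, and the obstruction at each stage is the $b$-homology class of the right-hand side; the remark that ``the mixed structures computed via $P$ and via $N$ agree'' gives no reason for these specific classes to vanish, and proving they vanish amounts to producing the $\xi_{i+1}$. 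The paper does this head-on: it works in the category freely generated by $P,N,f,g$ modulo the retract relations, quotients by two explicit acyclic subcomplexes so that the recursion collapses to $b(\xi_{i+1})=-B(\xi_i)$, and then writes the $\xi_i$ down explicitly. Some such explicit computation (or a citation to the Connes--Karoubi Chern character of an idempotent, where the same coefficients and the same $2\pi-1_N$ appear) is unavoidable, and it is the piece your proposal leaves open.
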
 
\begin{proof}
  Let $\cM$ be a $k$-category consisting of two formal objects, say $P$ and $N$, and  arrows 
  generated by morphisms $1_P := \mathrm{id} _P, g: P \to N, f: N \to P, 1_N := \mathrm{id} _N, \pi : N \to N $, together with the obvious relations by $1_P, 1_N$ and retract relations
\[ g\circ f = \pi,\  f\circ g = 1_P. \]
Let $\overline{\cM}(x, y)$ denote $\cM (x, y)$ if $x \ne y$ or
$\cM (x, y) / k \cdot 1_x $ if $x = y$.
The proof of Proposition~\ref{prop: eta appendix} amounts to finding 
$\{ \xi_i \}_{i=1}^{\infty}$ in
\[ \xi_i \in \overline{C}_{2i-1}(\cM) := \bigoplus _{x_0, ..., x_{2i-1} \in \cM} \cM (x_1, x_0) \ot \overline{\cM} (x_2, x_1)[1] \ot \cdots \ot \overline{\cM} (x_0, x_{2i-1}) [1] \] 
such that  $\xi_1:=g[f]$  and the $\xi _i$ satisfy the following relation
\begin{equation}\label{prop2.2_eq1}
    b(\xi_{i+1})=\eta_i-B(\xi_i)\text{ where }\eta_\pi:=\sum_{i=0}^\infty\eta_i u^i .
    \end{equation}
Let \[\overline{D'}:=\bigoplus_{n=0}^{\infty}\{a_0[a_1| ... | a_n]\mid
\begin{aligned}&a_0=t_{k_1}\pi+t_{k_2}1_N\text{ and }\\
  &a_{i}=t_{k_i} \pi\text{ for }t_{k_i}\in k,\text{ all }i\geq 1\end{aligned}\}\subseteq\overline{\Hoch} (\cM) . \]
  One may check that it is an acyclic subcomplex in $\overline{\Hoch} (\cM)$. Letting $\overline{\Hoch}^2 (\cM):=\overline{\Hoch} (\cM)/\overline{D'}$, define
\[\overline{D''}^2:=\bigoplus_{n=0}^{\infty}\{a_0[a_1| ... | a_n]\mid
\begin{aligned}&a_i=t_i f \text{ and }a_{i+1}=t_{i+1} \pi\\
  &\text{ for some }i+1\text{ mod }n+1\text{ and } t_j \in k\end{aligned}\}\subseteq\overline{\Hoch}^2 (\cM). \]
  By some computation, it can be also seen that it is an acyclic subcomplex in $\overline{\Hoch}^2 (\cM)$. In the quotient space
  \[\overline{\Hoch}^3 (\cM):=\overline{\Hoch}^2 (\cM)/\overline{D''}^2, \]
  relation~\eqref{prop2.2_eq1} becomes
  \begin{equation}\label{prop2.2eq2}
    b(\xi_{i+1})=-B(\xi_{i})\text { for }i\ge 1.
    \end{equation}
In $\overline{\Hoch}^3 (\cM)$, letting $\xi_1=g[f]$ and for $i\geq 2$
\[\begin{aligned}
  \xi_i:&=(i-1)!(-1)^{i-1}\bigg(g[f|\underbrace{g|...|f}_{(g|f)^{i-1}}]-\big(1_N[\pi|g|...|f]+1_N[\pi|\pi|\pi|\underbrace{g|...|f}_{(g|f)^{i-2}}]+\cdots\\
      &\cdots+1_N[\underbrace{\pi|...|\pi}_{2i-3}|g|f]\big)\bigg)\in \overline{C}_{2i-1}^3 (\cM),\end{aligned} \]
one may check that they satisfy the recursive relation~\eqref{prop2.2eq2}.
\end{proof}

\section{Proof of Theorem \ref{thm: tr nabla general}}\label{sec: proof of main}

\begin{Thm} \label{thm: tr nabla general appendix} {\em (Theorem \ref{thm: tr nabla general})}The map $\tr _{\nabla}$ satisfies 
\begin{equation*}
      (ud + d_A  - dh  ) \circ \tr _{\nabla}  =  \tr _{\nabla} \circ (uB + b_2 + b_1 + b_0) .
\end{equation*}
The same cochain map equality also holds  for $\tr _{\nabla}^{II}$.
\end{Thm}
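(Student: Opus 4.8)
The plan is to repeat, almost verbatim, the argument of \cite[proof of Theorem~5.19]{BW}; the only structural novelties here are that $\End_A(\cP)$ carries a genuine (central) curvature $\rho_{\nu-h}:=\delta_{\cP}^2+\rho_{-h}$ and that a single connection is replaced by a $V$-connection $\nabla$, and neither affects the skeleton of the computation. Since $\tr_{\nabla}$ and $\tr_{\nabla}^{II}$ are defined by the identical local formula \eqref{formula tr nabla}, and since the asserted identity is additive and is to be checked on each generator $\ka_0[\ka_1|\cdots|\ka_n]$ separately, it suffices to treat $\tr_{\nabla}$; the case of $\tr_{\nabla}^{II}$ is then word-for-word the same, the passage from a direct sum to a product over tensor degree being immaterial because the image of a fixed generator involves only the finitely many summands with $j_0+\cdots+j_n\le\dim X$ (more precisely, with total form-degree bounded by that of $\Omega^{\bullet}_{V/k}$).

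First I would isolate the handful of ``Chern--Weil'' identities on which the matching rests, all immediate from the definition \eqref{eqn: abst R} of $R$, the hypotheses \eqref{eqn: dA d} and $d_A h=0$, and the defining property of the supertrace \eqref{eqn: def trace}: (i) cyclicity of $\tr$, whence $\tr$ annihilates graded commutators and $d\circ\tr=\tr\circ[\nabla,-]$ on $\Omega^{\bullet}_{V/k}\ot_V\End_A(\cP)$; (ii) $d_A\circ\tr=\tr\circ d_A$ for the scalar-extension differential; (iii) the Bianchi-type relations $[\nabla,\nabla^2]=0$ and $[\nabla,[\nabla,\delta_{\cP}]]=[\nabla^2,\delta_{\cP}]$, which together give $u[\nabla,R]=[R,\delta_{\cP}]-[\nabla,\delta_{\cP}^2]$ and, since $\delta_{\cP}^2=\rho_{\nu}$ with $\nu\in V$ central, $[\nabla,\delta_{\cP}^2]=\rho_{d\nu}$; and (iv) the Leibniz rules for $[\nabla,-]$ and for $d_A$ with respect to composition in $\End_A(\cP)$. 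These are exactly the lemmas assembled in \cite[\S5]{BW}.

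Next I would expand $\tr_{\nabla}\circ(uB+b_2+b_1+b_0)$ on a generator and match it, piece by piece, against $(ud+d_A-dh)\circ\tr_{\nabla}$. The $b_2$-contribution (the products $\ka_i\ka_{i+1}$, including the cyclic wrap-around term): expanding $[\nabla,\ka_i\ka_{i+1}]$ by the Leibniz rule of (iv) and using cyclicity of $\tr$ for the wrap-around summand, one finds that these terms either telescope among themselves or merely reorganise the $R$-insertions, the residue being absorbed into $d_A\circ\tr_{\nabla}$ through (ii). The $b_1$-contribution (insertion of the internal differential $[\delta_{\cP},-]$) is handled by the commutator $[R,\delta_{\cP}]$ of (iii), observing that the $u^{0}$-part of $R$ is precisely $[\nabla,\delta_{\cP}]$. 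The $b_0$-contribution (insertion of $\rho_{\nu-h}$) produces, via $[\nabla,\rho_{\nu-h}]=\rho_{d\nu-dh}$, the ``$-dh$'' term, together with a spurious ``$d\nu$'' term that cancels against the $\rho_{d\nu}$ of (iii) arising from the $b_1$- and $uB$-contributions. Finally, the $uB$-contribution is the combinatorial core: $B=(1-t^{-1})sN$ inserts a unit and cyclically symmetrises, and the point is that, weighted by the factors $1/(n+J)!$ in \eqref{formula tr nabla}, the resulting double sum over cyclic rotations and over multi-indices $(j_0,\ldots,j_n)$ telescopes to exactly $u\,d$ applied to $\tr_{\nabla}(\ka_0[\ka_1|\cdots|\ka_n])$; this is the assertion that the generalised HKR map is a morphism of mixed complexes, carried out in \cite[proof of Theorem~5.19]{BW}.

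The main obstacle is not any single identity but the bookkeeping of Koszul signs --- those coming from the degree shifts ``$[1]$'' in the Hochschild complex, the factors $(-1)^{J}$, and the supertrace --- kept consistent across these four families of terms, together with the factorial-weighted telescoping inside the $uB$-piece. Since all of this is a mechanical, if lengthy, rerun of \cite{BW} with the cosmetic modifications noted above, I would present it in the appendix by recording those modifications rather than reproducing every line of the computation.
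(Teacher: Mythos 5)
Your proposal follows essentially the same route as the paper's Appendix B: both defer to the argument of Brown--Walker's Theorem~5.19, reducing the general case to the same package of identities (the supertrace commutator property $(ud+d_A)\tr(\gamma)=\tr[u\nabla+\delta_{\cP},\gamma]$, the Bianchi-type relations $[u\nabla+\delta_{\cP},\ka']=[R,\ka]-([\delta_{\cP},\ka])'$ and $[u\nabla+\delta_{\cP},R^j]=-j(d\nu)R^{j-1}$, and the combinatorial matching of the $b_2$, $b_1$, $uB$ pieces), with the curvature $\rho_{\nu-h}$ split as $\nu=h+(\nu-h)$ producing the $-dh$ term exactly as you describe. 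The only cosmetic difference is the direction of reduction between $\tr_{\nabla}$ and $\tr_{\nabla}^{II}$ (the paper reduces to the second-kind version), which is immaterial since both are given by the same formula and checked on generators.
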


\begin{proof} It is enough to show the equality for $\tr_{\nabla}^{II}$.
The proof of \cite[Theorem~5.19]{BW} for when $A=V$, $V$ is commutative,  and $\delta _P=0$ also works for the  general case. 
We provide some details following the proof.
In what follows, let 
\begin{equation}\label{eqn: alpha '} \ka ' := [\nabla, \ka]  \text{ for } \ka \in \End_A (\cP) , \end{equation}
 and let $\Grad : = u\nabla + \delta _{\cP}$ and $J :=\sum_{i=0}^n j_i$.
 Note that $[\delta _{\cP}, \ka ] \in \End _A (\cP )$. 
 Recalling $\delta_{\cP}=\delta _P \ot 1 + 1\ot d_A$ and the $V$-linearity of $\delta _P$, we see that
\begin{align*}
   &    (ud +d_A)\circ\mathrm{tr}_{\nabla}^{II} (\ka_0[ \ka_1| \cdots |\ka_n]) \\
    &=  \mathrm{tr}([u\nabla+\delta _{\cP} ,\sum_{(j_0, ..., j_n)}(-1)^J\frac{\ka_0R^{j_0}\ka'_1\cdots \ka'_nR^{j_n}}{(J+n)!}])\\
&=  \mathrm{tr}(\sum_{(j_0, ..., j_n)}(-1)^J\frac{[\Grad,\ka_0]R^{j_0}\ka'_1\cdots \ka'_nR^{j_n}}{(J+n)!})\\
    &+ \mathrm{tr}(\sum_{i=1}^{n}(-1)^{i-1 + \sum _{k=0}^{i-1} |\ka _k| } 
    \sum_{(j_0, ..., j_n)}(-1)^J\frac{\ka_0R^{j_0}\ka'_1\cdots R^{j_{i-1}}[\Grad,\ka '_i]R^{j_i}\cdots \ka'_nR^{j_n}}{(J+n)!})\\
    &+  \mathrm{tr}(\sum_{i=0}^{n} (-1)^{i + \sum _{k=0}^{i} |\ka _k|}  
    \sum_{(j_0, ..., j_n)}(-1)^J\frac{\ka_0R^{j_0}\ka'_1\cdots [\Grad,R^{j_{i}}]\ka'_{i+1}R^{j_{i+1}}\cdots \ka'_nR^{j_n}}{(J+n)!}) .
      \end{align*}
    In the last equality, by \eqref{eqn: b2 b1 uB} the first two terms become $\mathrm{tr}_{\nabla}^{II} \circ(b_2+b_1+uB) $
      and by \eqref{eqn: nabla R} the last term becomes  
      \begin{equation}\label{eqn: last1}  -\mathrm{tr}(\sum_{i=0}^{n}j_{i}(d\nu)\sum_{(j_0, ..., j_n)}(-1)^J\frac{\ka_0R^{j_0}\ka'_1\cdots R^{j_{i}-1}\ka'_{i+1}R^{j_{i+1}}\cdots \ka'_nR^{j_n}}{(J+n)!}). \end{equation}
      After letting $K:=J-1$ and  reindexing by $K= k_0 + ... + k_n $, \eqref{eqn: last1} becomes
            \[  d\nu \wedge \mathrm{tr} ( \sum_{(k_0, ..., k_n)}(-1)^K\frac{\ka_0R^{k_0}\ka'_1\cdots R^{k_{i}}\ka'_{i+1}R^{k_{i+1}}\cdots \ka'_nR^{k_n}}{(K+n)!}). \]
            Writing $\nu$ as the sum of $h$ and the curvature element $(\nu - h)$ of $\cP$, we conclude the proof.  \end{proof}

\begin{lemma}
$\tr : \mathrm{End}_A (\cP )    \ot _V \Om _{V/k}[[u]] \to A    \ot _V \Om _{V/k}[[u]] $

\[  (ud  + d_A ) \tr (\gamma ) = \tr [ u\nabla + \delta _{\cP} , \gamma ] \] 
\end{lemma}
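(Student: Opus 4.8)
The plan is to split the superconnection $u\nabla+\delta_{\cP}$ into its two summands and prove separately the two identities
\[ d_A\,\tr(\gamma)=\tr\big([\delta_{\cP},\gamma]\big),\qquad d\,\tr(\gamma)=\tr\big([\nabla,\gamma]\big), \]
for $\gamma\in\End_A(\cP)\ot_V\Om_{V/k}[[u]]$; adding $u$ times the second to the first and using $k[[u]]$-linearity of every map involved gives the lemma. Since $\tr$ is $\Om_{V/k}$-linear in the graded sense dictated by the product on $\Om_{V/k}\ot_V\End_A(\cP)$, and since $[\delta_{\cP},-]$ (resp.\ $d_A$ on the target) passes through the $\Om_{V/k}$-factor with a Koszul sign while $[\nabla,-]$ (resp.\ $d$ on the target) acts on it by the de Rham differential, the graded Leibniz rules on both sides reduce each of the two identities to the case $\gamma\in\End_A(\cP)$ carrying no differential-form component; here the hypothesis \eqref{eqn: dA d} is exactly what makes $d$ and $d_A$ anticommute on the target, so that this decoupling is legitimate.

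For the first identity, recall that $\End_A(\cP)=\End_V(P)\ot_V A$, that $\delta_{\cP}=\delta_P\ot 1+1\ot d_A$ with $\delta_P\in\End_V(P)$ of degree $1$ and $V$-linear, and that the supertrace is $\tr_P\ot\mathrm{id}_A$ for the $V$-valued supertrace $\tr_P$ of $P$. Writing an element of $\End_A(\cP)$ as $\psi\ot a$ with $\psi\in\End_V(P)$ and $a\in A$, one has $[\delta_{\cP},\psi\ot a]=[\delta_P,\psi]\ot a\pm\psi\ot d_A(a)$; applying $\tr$ annihilates the first term, because the supertrace of a graded commutator of $V$-linear endomorphisms of $P$ vanishes, and the remaining term is $d_A\tr(\psi\ot a)$ since $d_A|_V=0$. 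This is routine sign bookkeeping.

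For the second identity, the key observation is that the difference $D(\phi):=d\,\tr(\phi)-\tr\big([\nabla,\phi]\big)$ is $A$-linear in $\phi\in\End_A(\cP)$: the $da$-terms produced by the Leibniz rule for $d$ on the target and by that for the induced connection $[\nabla,-]$ cancel against one another, using that $\tr$ is graded $A$-linear because $A$ is graded-commutative. Thus vanishing of $D$ may be checked after localizing $V$ at each of its primes, since $d$, $d_A$, $\tr$ and $[\nabla,-]$ all commute with localization. After such a localization $P$, hence $\cP$, becomes free, and writing $\nabla=d_F+C$ with $d_F$ the coordinate connection and $C$ a matrix of $1$-forms with entries in $\Omega^1_{V/k}\ot_V A$, one finds $\tr\big([\nabla,\phi]\big)=\tr\big([d_F,\phi]\big)+\tr\big([C,\phi]\big)$; the first summand equals $\sum_i d(\phi_{ii})=d\,\tr(\phi)$ and the second vanishes, being the supertrace of a graded commutator of matrices over the graded-commutative ring $\Om_{V/k}\ot_V A$. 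Hence $D=0$.

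The main obstacle is purely the Koszul sign bookkeeping: one must verify that the conventions for the product \eqref{eqn: ACW}, the supertrace, and the extensions of $d$ and $d_A$ to forms with values in $A$ or $\End_A(\cP)$ are mutually consistent, and in particular that \eqref{eqn: dA d} indeed forces $d$ and $d_A$ to anticommute on the target so that the two identities above may be treated independently. The argument otherwise involves no conceptual difficulty, and it parallels the corresponding part of the proof of \cite[Theorem~5.19]{BW}.
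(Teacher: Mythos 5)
Your overall strategy coincides with the paper's: split the superconnection $u\nabla+\delta_{\cP}$ into its two summands, dispose of the $\delta_{\cP}$-part by the vanishing of supertraces of commutators of $V$-linear endomorphisms of $P$ (which only needs $V$ commutative and $P$ finitely generated projective over $V$), and reduce the $\nabla$-part to the free case, where the coordinate connection visibly satisfies the identity and the remaining discrepancy is the supertrace of a commutator with an $A$-linear endomorphism. The one structural difference is how you reach the free case: the paper embeds $P$ once and for all as a direct summand of a finite sum $N$ of shifted copies of $V$ and transports the computation through the $A$-linear maps $g,f$ with $f\circ g=1$, whereas you localize $V$ at its primes. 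Both reductions are legitimate here; yours needs the additional (routine) remarks that $\tr$, $d$, $d_A$ and the connection all commute with localization and that vanishing of an element of $A\ot_V\Om_{V/k}[[u]]$ can be tested degreewise in $u$, while the paper's global embedding avoids this bookkeeping.

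There is, however, a genuine gap in your justification of the key vanishing. Twice you invoke graded-commutativity of $A$: once to claim $\tr$ is graded $A$-linear (so that $D$ is $A$-linear in $\phi$), and once to conclude $\tr([C,\phi])=0$. But $A$ is \emph{not} graded-commutative in this paper: in the intended application $A=\Gamma(U,\TC(\cO_X))$ with the Alexander--\v{C}ech--Whitney product \eqref{eqn: ACW O}, for which $(a\cdot b)_{i_0\cdots i_{p+q}}=a_{i_0\cdots i_p}b_{i_p\cdots i_{p+q}}$ is visibly noncommutative, and the supertrace \eqref{eqn: def trace} is then not supercyclic --- already for $\cP=A$, where $\End_A(A)=A$ acts by left multiplication, $\tr$ of the product in one order is $ab$ and in the other is $\pm ba$, and these differ. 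So ``the supertrace of a graded commutator of matrices over the graded-commutative ring $\Om_{V/k}\ot_V A$ vanishes'' rests on a false premise, and this is precisely the delicate point of the lemma; the paper's own proof passes over the corresponding step, $\tr_N(g\circ[u f d g,\gamma]\circ f)=\tr_N(g\circ[u\nabla,\gamma]\circ f)$, equally quickly. The repair must come not from commutativity of $A$ but from the componentwise conventions of \eqref{eqn: concrete form}: on each $U_{i_0\cdots i_p}$ the bracket $[\nabla,\ka]$ and the supertrace are computed with the single connection $\nabla_{i_0}$ and with endomorphisms of a projective module over the commutative ring $\cO_X(U_{i_0\cdots i_p})$, where supercyclicity does hold. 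You should make explicit that your matrix $C$ and the commutator $[C,\phi]$ are interpreted in this componentwise sense; as literally written, your argument (and, for that matter, a literal operator-commutator reading of the paper's) would produce uncancelled terms of the form $(\nabla_{i_0}-\nabla_{i_p})$ acting on positive \v{C}ech degrees.
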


\begin{proof}
As a $V$-module, $P$ is a direct summand of a finite sum $N$ of shifted  $V$'s.  
Hence we have $A$-linear maps  $g : P \ot _V  A \to N \ot _V A $ and $f: N \ot _V A  \to P \ot _V A$ such that $f \circ g = 1_{P \ot _V A}$.
Then  we have 
\begin{multline*}   ud   \tr _P (\gamma ) = ud     \tr _N (g \circ  \gamma \circ f  )  =  \tr _N [ u  d   , g\circ  \gamma \circ f  ] \\
 =  \tr _N  (g\circ [ u f \circ d \circ g  ,  \gamma]  \circ f  )   =  \tr _N ( g\circ  [ u\nabla , \gamma ] \circ f )  =\tr _P [ u\nabla  , \gamma ] ,\end{multline*}
 where $\tr _P $, $\tr _N$ indicate the supertrace for $\End _V(P)$, $\End_V (N)$, respectively.
Similarly, $  d_A  \tr _P (\gamma ) = \tr _P [ d_A , \gamma ] = \tr _P [ \delta _{\cP}, \gamma ] $.
\end{proof}

\begin{lemma} With the notation from \eqref{eqn: alpha '}, in the algebra $\End _A (\cP ) \ot _V \Omega ^{\bullet}_V [[u]] $ we have 
 \begin{equation}   \label{eqn: three}
    [u\nabla + \delta _{\cP} , \ka ']   =   [R, \ka ] - ([\delta_{\cP},  \ka ])'  ;                \end{equation}
 \begin{equation} \label{eqn: nabla R}
         [ u \nabla + \delta _{\cP},  R^j ]     =     - j (d\nu) R^{j-1} . \end{equation}
\end{lemma}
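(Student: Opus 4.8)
The plan is to trade the operators $\nabla$, $\delta_{\cP}$ and $\Grad := u\nabla + \delta_{\cP}$ acting on $\cP\otimes_V\Omega^{\bullet}_{V/k}[[u]]$ — none of which lies in $\mathcal{E} := \End_A(\cP)\otimes_V\Omega^{\bullet}_{V/k}[[u]]$ — for the associated \emph{graded derivations} $D_\nabla := [\nabla,-]$, $D_\delta := [\delta_{\cP},-]$ and $D_\Grad := [\Grad,-] = uD_\nabla + D_\delta$ of the $k[[u]]$-algebra $\mathcal{E}$, of degrees $-1$, $1$ and $1$. That these are well-defined derivations of $\mathcal{E}$ rests on the Leibniz rule \eqref{eqn: derivation rule}, the Leibniz rule for $\nabla$ in Definition~\ref{def: V con}, and the fact — already recorded in the proof of Theorem~\ref{thm: tr nabla general appendix} — that $[\delta_{\cP},\ka]\in\End_A(\cP)$ for $\ka\in\End_A(\cP)$; here $\rho_\nu$ is viewed as the element $1\otimes\rho_\nu$ of $\mathcal{E}$, and it is central in $\mathcal{E}$ because $\nu\in V$ is central in $A$.

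The single operator identity that drives everything is $\Grad^2 = uR + \rho_\nu$, obtained by expanding $(u\nabla+\delta_{\cP})^2$ and using $\nabla\delta_{\cP}+\delta_{\cP}\nabla = [\nabla,\delta_{\cP}]$ together with $\delta_{\cP}^2 = \rho_\nu$. At the level of derivations this reads $D_\Grad^2 = [uR,-]$, which unwinds via the graded Jacobi identity into the three facts I will actually use: for the odd operator $\nabla$ one has $D_\nabla^2 = [\nabla^2,-]$; since $\delta_{\cP}$ is odd and $\rho_\nu$ is central, $D_\delta^2 = [\rho_\nu,-] = 0$ on $\mathcal{E}$; and $D_\nabla D_\delta + D_\delta D_\nabla = [[\nabla,\delta_{\cP}],-]$.

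For the first identity, expand $[\Grad,\ka'] = D_\Grad D_\nabla(\ka) = uD_\nabla^2(\ka) + D_\delta D_\nabla(\ka)$. The first summand is $[u\nabla^2,\ka]$. For the second, write $D_\delta D_\nabla = (D_\nabla D_\delta + D_\delta D_\nabla) - D_\nabla D_\delta$, so $D_\delta D_\nabla(\ka) = [[\nabla,\delta_{\cP}],\ka] - [\nabla,[\delta_{\cP},\ka]] = [[\nabla,\delta_{\cP}],\ka] - ([\delta_{\cP},\ka])'$. Adding the two summands and recognising $u\nabla^2 + [\nabla,\delta_{\cP}] = R$ gives \eqref{eqn: three}. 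For the second identity, $D_\Grad$ is a derivation of the associative algebra $\mathcal{E}$ and $R$ has total degree $0$, so the graded Leibniz rule gives $[\Grad,R^j] = \sum_{\ell=0}^{j-1} R^\ell\,[\Grad,R]\,R^{j-1-\ell}$ with no Koszul signs. From $[\Grad,\Grad^2]=0$ (as $\Grad$ is odd) and $\Grad^2 = uR + \rho_\nu$ we get $0 = u[\Grad,R] + [\Grad,\rho_\nu]$; since $d_A|_V = 0$ forces $[\delta_{\cP},\rho_\nu] = 0$, we have $[\Grad,\rho_\nu] = u[\nabla,\rho_\nu]$, and cancelling the non-zero-divisor $u$ leaves $[\Grad,R] = -[\nabla,\rho_\nu]$. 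A direct computation from $\nabla(m\nu) = \nabla(m)\nu + (-1)^{|m|}m\otimes d\nu$ identifies $[\nabla,\rho_\nu]$ with multiplication by $d\nu$ in $\mathcal{E}$; because the endomorphism part of this element is $\mathrm{id}_{\cP}$ it is central, hence commutes with $R$, so each of the $j$ terms in the Leibniz sum equals $-(d\nu)R^{j-1}$ and the total is $-j\,(d\nu)\,R^{j-1}$.

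The routine but genuinely delicate part — the one I expect to cost the most care — is the sign bookkeeping. Here $d$ has degree $-1$, so $\Omega^p_{V/k}$ contributes $-p$ to the total degree and $\nabla$ is a degree $-1$ operator; the Koszul signs in the graded Jacobi identities, in the graded Leibniz expansion of $D_\Grad(R^j)$, and above all in pinning down the exact sign with which $[\nabla,\rho_\nu]$ equals $d\nu$ inside $\End_A(\cP)\otimes_V\Omega^{\bullet}_{V/k}$, must all be tracked consistently against the algebra structure $(\gamma_1\otimes\ka_1)\cdot(\gamma_2\otimes\ka_2) = (-1)^{|\ka_1||\gamma_2|}(\gamma_1\wedge\gamma_2)\otimes(\ka_1\circ\ka_2)$; everything else is formal manipulation with derivations, closely parallel to the argument in \cite[Theorem~5.19]{BW}.
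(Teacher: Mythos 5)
Your argument is correct, and it is exactly the direct computation the paper has in mind where it merely writes ``The proofs are straightforward'': identity \eqref{eqn: three} follows from the graded Jacobi identities $[\nabla,[\nabla,\ka]]=[\nabla^2,\ka]$ and $[\delta_{\cP},[\nabla,\ka]]+[\nabla,[\delta_{\cP},\ka]]=[[\nabla,\delta_{\cP}],\ka]$, and \eqref{eqn: nabla R} from $\Grad^2=uR+\rho_\nu$, $[\Grad,\Grad^2]=0$, $[\delta_{\cP},\rho_\nu]=0$ (using $d_A|_V=0$) and the centrality of $[\nabla,\rho_\nu]=d\nu$. Your bookkeeping of the degree conventions ($d$ of degree $-1$, $R$ of total degree $0$, $\rho_\nu$ even and central) is consistent with the paper's, so nothing is missing.
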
 
\begin{proof}
The proofs are straightforward. 
\end{proof}

\begin{lemma}  In $A \ot _V \Om _{V/k}[[u]]$ we have
 \begin{multline} \label{eqn: one}   
 \mathrm{tr}\Big(\sum_{i=1}^{n}(-1)^{i-1+ \sum _{k=0}^{i-1} |\ka _k| } \sum_{(j_0, ..., j_n)} (-1)^J \frac{\ka_0R^{j_0}\ka'_1\cdots R^{j_{i-1}}[R,\ka_i]R^{j_i}\cdots \ka'_nR^{j_n}}{(J+n)!}\Big) \\
  =\mathrm{tr}_{\nabla}^{II} \circ b_2(\ka_0[\ka_1 | \dots | \ka_n]);   \end{multline}
  \begin{equation} \label{eqn: two} 
      \mathrm{tr}\big(\sum_{(j_0, ..., j_n)} (-1)^J  
      \frac{u\ka'_0R^{j_0}\ka'_1\cdots \ka '_nR^{j_n}}{(J+n)!}\big)=\mathrm{tr}_{\nabla}^{II} \circ (uB)(\ka_0[ \ka_1 | \dots | \ka_n]); \text{ and }  \end{equation}
      \begin{align}\label{eqn: b2 b1 uB}   &   \mathrm{tr}(\sum_{(j_0, ..., j_n)}(-1)^J  \frac{[\Grad, \ka_0]R^{j_0}\ka'_1\cdots \ka'_nR^{j_n}}{(J+n)!})      \\
&+ \mathrm{tr}(\sum_{i=1}^{n}(-1)^{i-1 + \sum _{k=0}^{i-1} |\ka _k| } 
\sum_{(j_0, ..., j_n)}(-1)^J\frac{\ka_0R^{j_0}\ka'_1\cdots R^{j_{i-1}}[\Grad,\ka'_i]R^{j_i}\cdots \ka'_nR^{j_n}}{(J+n)!}) \nonumber \\
&=  \mathrm{tr}_{\nabla}^{II} \circ(b_2+b_1+uB), \nonumber 
   \end{align} where $\Grad : = u\nabla + \delta _{\cP}$.
  \end{lemma}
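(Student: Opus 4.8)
The plan is to verify the three displayed identities by the direct computation of \cite[proof of Theorem~5.19]{BW}, which treats the special case $A=V$, $V$ commutative, $\delta_P=0$; only two cosmetic changes are needed for the present generality. First, we carry $A$-coefficients rather than $V$-coefficients: since $V$ lies in the centre of $A$ and $\cP=P\ot_V A$, the graded $k[[u]]$-module $\Om_{V/k}\ot_V\End_A(\cP)[[u]]$ is again an associative $\Om_{V/k}[[u]]$-algebra, the supertrace $\tr$ of \eqref{eqn: def trace} is $\Om_{V/k}[[u]]$-linear and graded cyclic (cyclicity uses that $\cP$ is finitely generated projective over $A$), and $(-)':=[\nabla,-]$ is an odd graded derivation of it. Second, $\delta_P$ need not vanish; we work throughout with $\delta_\cP=\delta_P\ot 1+1\ot d_A$, whose interaction with $\nabla$ is recorded by \eqref{eqn: three}. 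With these in hand the only tools used are the graded Leibniz rule $(\ka\kb)'=\ka'\kb+(-1)^{|\ka|}\ka\kb'$, graded cyclicity of $\tr$, the commutator identities \eqref{eqn: three} and \eqref{eqn: nabla R}, the fact that $R$ has total degree $0$ (so $[R,-]$ is an ordinary commutator), and elementary manipulation of the sums over compositions $(j_0,\dots,j_n)\in\ZZ_{\ge 0}^{n+1}$.

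For \eqref{eqn: one} I would start from the right-hand side. By definition $b_2(\ka_0[\ka_1|\dots|\ka_n])$ is the alternating sum of the terms obtained by composing two adjacent morphisms, including the cyclic term in which $\ka_n\ka_0$ is formed. Applying $\tr_{\nabla}^{II}$ to such a merged term and expanding its single merged slot by $(\ka_i\ka_{i+1})'=\ka'_i\ka_{i+1}+(-1)^{|\ka_i|}\ka_i\ka'_{i+1}$ produces, in that slot, either $\ka'_i\ka_{i+1}$ or $\ka_i\ka'_{i+1}$ with no power of $R$ between the two factors. Graded cyclicity of $\tr$ lets one compare the summand coming from merging slots $(i-1,i)$ and retaining $\ka_i$ unprimed with the summand coming from merging slots $(i,i+1)$ and retaining $\ka_i$ unprimed: the two differ only in whether $\ka_i$ is glued directly to its left neighbour $\ka'_{i-1}$ or to its right neighbour $\ka'_{i+1}$, the intervening power of $R$ sitting on the opposite side. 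After the compositions $(j_0,\dots,j_n)$ are re-summed, their signed difference is a single commutator $[R,\ka_i]$ occupying the $i$-th slot with every other slot primed; since expanding $[R,\ka_i]$ reintroduces exactly one power of $R$, the denominator $1/((n-1)+J')!$ of the merged terms becomes the denominator $1/(n+J)!$ of \eqref{eqn: one} with no spurious binomial factor. The signs $(-1)^{i-1+\sum_{k=0}^{i-1}|\ka_k|}$ are exactly those produced by carrying $\nabla$ past $\ka_0,\dots,\ka_{i-1}$ under the Koszul rule, as in \cite{BW}.

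For \eqref{eqn: two} I would unwind Connes' operator, which on the normalized complex reduces to $B=sN$ (the $t^{-1}$ summand of $(1-t^{-1})sN$ dies because it places the inserted unit in a non-initial slot). Now $sN$ replaces $\ka_0[\ka_1|\dots|\ka_n]$ by the signed sum over its $n+1$ cyclic rotations, each prepended with the identity; applying $\tr_{\nabla}^{II}$, the inserted unit contributes nothing, and graded cyclicity of $\tr$ fuses, within each rotation, the $R$-blocks at the head and tail of the trace into one. Summing the resulting split-multiplicities over the $n+1$ rotations supplies the factor $n+1+L$ that reconciles the denominators $((n+1)+L)!$ and $(n+L)!$, and the surviving expression is exactly $u\sum_{(j_0,\dots,j_n)}(-1)^J\tr\big(\ka'_0R^{j_0}\ka'_1\cdots\ka'_nR^{j_n}\big)/(n+J)!$, i.e.\ the left-hand side of \eqref{eqn: two}. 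Granting \eqref{eqn: one} and \eqref{eqn: two}, identity \eqref{eqn: b2 b1 uB} is then purely formal: write $[\Grad,\ka_0]=u\ka'_0+[\delta_\cP,\ka_0]$ and, by \eqref{eqn: three}, $[\Grad,\ka'_i]=[R,\ka_i]-([\delta_\cP,\ka_i])'$. The resulting $[R,\ka_i]$-terms sum to $\tr_{\nabla}^{II}\circ b_2$ by \eqref{eqn: one}; the $u\ka'_0$-term is $\tr_{\nabla}^{II}\circ(uB)$ by \eqref{eqn: two}; and the remaining terms, carrying $[\delta_\cP,\ka_0]$ in the $0$-slot or $([\delta_\cP,\ka_i])'=[\nabla,[\delta_\cP,\ka_i]]$ in the $i$-th slot, are, by the definition of $b_1$ (replacing $\ka_i$ by its differential $[\delta_\cP,\ka_i]$) and formula \eqref{formula tr nabla}, exactly $\tr_{\nabla}^{II}\circ b_1(\ka_0[\ka_1|\dots|\ka_n])$. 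Adding up gives $\tr_{\nabla}^{II}\circ(b_2+b_1+uB)$.

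I expect the main obstacle to be the sign-and-factorial bookkeeping inside \eqref{eqn: one}: one must organize the Leibniz expansion of the merged terms so that the telescoping across consecutive slots $i$ is transparent, treat the cyclic term $\ka_n\ka_0$ of $b_2$ on exactly the same footing as the interior merges (the point where graded cyclicity of $\tr$, hence the finiteness and projectivity of $\cP$, is genuinely used), and confirm that undoing one composition while reinserting one power of $R$ matches the denominators $((n-1)+J')!\leftrightarrow(n+J)!$ with no extra combinatorial coefficient. None of this is conceptually deep---it is faithfully the computation of \cite{BW}---but it is where all the care goes; \eqref{eqn: two} involves the analogous manipulation with Connes' operator in place of $b_2$, and \eqref{eqn: b2 b1 uB} is routine once the first two identities are established.
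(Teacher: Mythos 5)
Your proposal is correct and follows essentially the same route as the paper: the paper likewise reduces \eqref{eqn: b2 b1 uB} to \eqref{eqn: three}, \eqref{eqn: one}, and \eqref{eqn: two} via the decompositions $[\Grad,\ka_0]=u\ka_0'+[\delta_\cP,\ka_0]$ and $[\Grad,\ka_i']=[R,\ka_i]-([\delta_\cP,\ka_i])'$, and defers the combinatorial verification of \eqref{eqn: one} and \eqref{eqn: two} to the Appendix of \cite{BW}, which is exactly the Leibniz-plus-graded-cyclicity computation you sketch. Your account simply spells out more of the factorial and sign bookkeeping than the paper chooses to.
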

\begin{proof}
Equations~\eqref{eqn: one} and~\eqref{eqn: two} are straightforward up to some combinatorics which are checked in the Appendix in \cite{BW}.
Equation~\eqref{eqn: b2 b1 uB} follows from \eqref{eqn: three}, \eqref{eqn: one}, and \eqref{eqn: two}.
\end{proof}

\end{document}